\theoremstyle{plain}
\newtheorem{theorem}{\bf Theorem}[section]
\newtheorem{proposition}[theorem]{\bf Proposition}
\newtheorem{lemma}[theorem]{\bf Lemma}
\newtheorem{corollary}[theorem]{\bf Corollary}
\theoremstyle{definition}
\newtheorem{example}[theorem]{\bf Example}
\newtheorem{remark}[theorem]{\bf Remark}
\begin{document}
\title{On transfer Krull monoids}
\address{Institut f\"ur Mathematik und Wissenschaftliches Rechnen, Karl-Franzens-Universit\"at Graz, NAWI Graz, Heinrichstra{\ss}e 36, 8010 Graz, Austria}
\email{aqsa.bashir@uni-graz.at,andreas.reinhart@uni-graz.at}
\author{Aqsa Bashir and Andreas Reinhart}
\thanks{This work was supported by the Austrian Science Fund FWF, Project Numbers P33499 and W1230.}
\keywords{affine, half-factorial, Krull, root closure, transfer Krull}
\subjclass[2010]{13A05, 13F05, 13F15, 13F45, 20M13}

\begin{abstract}
Let $H$ be a cancellative commutative monoid, let $\mathcal{A}(H)$ be the set of atoms of $H$ and let $\widetilde{H}$ be the root closure of $H$. Then $H$ is called transfer Krull if there exists a transfer homomorphism from $H$ into a Krull monoid. It is well known that both half-factorial monoids and Krull monoids are transfer Krull monoids. In spite of many examples and counter examples of transfer Krull monoids (that are neither Krull nor half-factorial), transfer Krull monoids have not been studied systematically (so far) as objects on their own. The main goal of the present paper is to attempt the first in-depth study of transfer Krull monoids. We investigate how the root closure of a monoid can affect the transfer Krull property and under what circumstances transfer Krull monoids have to be half-factorial or Krull. In particular, we show that if $\widetilde{H}$ is a DVM, then $H$ is transfer Krull if and only if $H\subseteq\widetilde{H}$ is inert. Moreover, we prove that if $\widetilde{H}$ is factorial, then $H$ is transfer Krull if and only if $\mathcal{A}(\widetilde{H})=\{u\varepsilon\mid u\in\mathcal{A}(H),\varepsilon\in\widetilde{H}^{\times}\}$. We also show that if $\widetilde{H}$ is half-factorial, then $H$ is transfer Krull if and only if $\mathcal{A}(H)\subseteq\mathcal{A}(\widetilde{H})$. Finally, we point out that characterizing the transfer Krull property is more intricate for monoids whose root closure is Krull. This is done by providing a series of counterexamples involving reduced affine monoids.
\end{abstract}

\maketitle

\section{Introduction}

Factorization theory studies the arithmetic structure of monoids and domains that are not factorial. A monoid is called a Krull monoid if it is a completely integrally closed Mori monoid. Moreover, a monoid is factorial if and only if it is a Krull monoid with trivial $t$-class group. Krull monoids are also a natural generalization of Dedekind domains, they are among the best understood objects in Factorization Theory and they posses several remarkable properties. For instance, it is possible to describe the arithmetic of Krull monoids purely in terms of their $t$-class group. More precisely, they allow a transfer homomorphism to a monoid of zero-sum sequences over their $t$-class group. Transfer homomorphisms allow to pull back the arithmetic properties from the target object to the source object. Thus, main parts of the arithmetic of Krull monoids can be studied in monoids of zero-sum sequences, where methods from Additive Combinatorics are available. It has since been asked whether there exist other types of monoids for which large parts of their arithmetic can be described by monoids of zero-sum sequences. In \cite{Ge16} the concept of abstract transfer Krull monoids was formally introduced and it provides a natural generalization of Krull monoids. It is well known that half-factorial monoids are another important class of transfer Krull monoids. But in general, transfer Krull monoids are neither Krull monoids nor half-factorial monoids.

\smallskip

The transfer Krull property has been studied in a variety of contexts. It was, for instance, investigated in the case of commutative unit-cancellative semigroups with identity, but also for noncommutative semigroups \cite{Ba-Ba-Go14, Ba-Sa20, Ba-Sm15, Ba-Sm18, Sm13, Sm19}. To mention one of the most striking results, let $R$ be a bounded hereditary noetherian prime ring. If every stably free right ideal is free, then there is a transfer homomorphism from the monoid of regular elements of $R$ to a Krull monoid. An overview on monoids (and domains) that allow resp. do not allow transfer homomorphisms to a Krull monoid can be found in \cite{Ge-Zh20}. We continue with a few more highlights to indicate the importance of transfer Krull monoids. In \cite{Ge-Ro20} it is proved that a strongly primary monoid is transfer Krull if and only if it is half-factorial. Moreover, a length-factorial monoid is transfer Krull if and only if it is Krull (\cite{Ge-Zh21}). In \cite{Ba-Ge-Re21} it is shown that the monoid of invertible ideals of a stable order in a Dedekind domain is transfer Krull if and only if it is half-factorial.

\smallskip
It is proved in \cite{Ge-Zh20} that a monoid is transfer Krull if and only if there is a Krull overmonoid for which the inclusion map (from the monoid into the Krull overmonoid) is a transfer homomorphism. Obviously, every Krull overmonoid of a monoid $H$ contains the root closure of $H$. The root closure is, therefore, the smallest possible candidate for a Krull overmonoid. The main purpose of this paper is to explore the impact of the root closure on the transfer Krull property. We complement the known literature with several new conditions that force a transfer Krull monoid to be Krull or half-factorial.

\smallskip

This paper consists of six sections including the introduction. In the next section, we introduce the most important notions and terminology and provide a series of elementary results involving transfer homomorphisms and transfer Krull monoids. Furthermore, we present more practicable (and simple) characterizations of the transfer Krull property for arbitrary monoids and $s$-noetherian monoids. The third section is devoted to the study of transfer Krull monoids whose root closure satisfies what we call property (U). It turns out, in particular, that a monoid $H$ whose root closure $\widetilde{H}$ satisfies property (U) is transfer Krull if and only if $\widetilde{H}$ is Krull and the inclusion map $H\hookrightarrow\widetilde{H}$ is a transfer homomorphism. Furthermore, we show that the valuation monoids are precisely the GCD-monoids which satisfy property (U). In Section 4 we discuss the effects of (half-)factoriality of the root closure, and show that a monoid whose root closure is half-factorial is transfer Krull if and only if it is half-factorial. Section 5 is devoted to the study of Cohen-Kaplansky domains and their generalizations. In particular, we rediscover and strengthen a result of \cite{Ba-Pi06} which states that a seminormal Cohen-Kaplansky domain is half-factorial and characterize the transfer Krull property for generalized Cohen-Kaplansky domains. In the last section we touch on the problem how transfer Krull monoids whose root closure is Krull can look like. We show that none of the aforementioned characterizations in this paper can be applied to monoids whose root closure is Krull. More precisely, we show that such simple descriptions cannot even be gathered for affine monoids (i.e., finitely generated monoids that are isomorphic to an additive submonoid of $\mathbb{Z}^n$ for some positive integer $n$).

\section{Notation and preliminaries}

Let $H$ be a commutative semigroup with identity. We say that $H$ is cancellative if for all $a,b,c\in H$ with $ac=bc$, it follows that $a=b$.

\bigskip
\begin{center}
{\it Throughout this paper, a monoid is always a commutative cancellative semigroup with identity.}
\end{center}
\bigskip

We denote by $\mathbb{Z}$ the set of integers, by $\mathbb{N}$ the set of positive integers and by $\mathbb{N}_0=\mathbb{N}\cup\{0\}$ the set of non-negative integers. For $r,s\in\mathbb{Z}$ let $[r,s]=\{x\in\mathbb{Z}\mid r\leq x\leq s\}$. Let $H$ be a (multiplicatively written) monoid and $K$ a quotient group of $H$. We let $H^{\times}$ denote the {\it group of units} of $H$ and we call $H$ {\it reduced} if $H^{\times}=\{1\}$. We denote by $H_{\rm red}=\{aH^{\times}\mid a\in H\}$ the {\it associated reduced monoid} of $H$. For $a,b\in H$ we set $a\mid_H b$ if $b=ac$ for some $c\in H$ and we set $a\simeq_H b$ if $a\mid_H b$ and $b\mid_H a$ (equivalently, $a=b\varepsilon$ for some $\varepsilon\in H^{\times}$). A subset $T$ of $H$ is called a {\it submonoid} of $H$ if $1\in T$ and $ab\in T$ for all $a,b\in T$. For a subset $S\subseteq H$, we let $[S]$ denote the smallest submonoid of $H$ containing $S$. If $S\subseteq K$, then let $\langle S\rangle$ denote the smallest subgroup of $K$ containing $S$. A submonoid of $K$ that contains $H$ is called an {\it overmonoid} of $H$. A subset $\mathfrak{a}\subseteq H$ is said to be an {\it $s$-ideal} of $H$ if $\mathfrak{a}H=\mathfrak{a}$. Let $\mathfrak{p}$ be an $s$-ideal of $H$. Then $\mathfrak{p}$ is said to be {\it prime} if $H\setminus\mathfrak{p}$ is a submonoid of $H$ and we denote by $s$-${\rm spec}(H)$ the set of all prime $s$-ideals of $H$. Moreover, an $s$-ideal $\mathfrak{a}$ of $H$ is called {\it $s$-finitely generated} if $\mathfrak{a}=EH$ for some finite subset $E\subseteq\mathfrak{a}$. By $\mathfrak{X}(H)$ we denote the set of minimal non-empty prime $s$-ideals of $H$. For $a\in H$ let $aH=\{ab\mid b\in H\}$ be the {\it principal ideal} generated by $a$. Clearly, every principal ideal is an $s$-ideal. We let $\mathcal{H}(H)$ denote the set of principal ideals of $H$.

\bigskip

The monoid $H$ is said to be {\it finitely generated} if there exists a finite subset $E\subseteq H$ such that $H=[E]$. We say that $H$ is {\it $s$-noetherian} if $H$ satisfies the ascending chain condition on $s$-ideals. Note that $H$ is $s$-noetherian if and only if every $s$-ideal of $H$ is $s$-finitely generated if and only if $H=[E\cup H^{\times}]$ for some finite subset $E\subseteq H$ if and only if $H_{\rm red}$ is finitely generated (see \cite[Proposition 2.7.4]{Ge-HK06}). Moreover, $H$ is finitely generated if and only if $H$ is $s$-noetherian and $H^{\times}$ is finitely generated. In particular, finitely generated monoids are $s$-noetherian and the converse holds for reduced monoids.

\bigskip

Let $u$ be a non-unit of $H$. Then $u$ is called an {\it atom} of $H$ if $u$ is not the product of two non-units of $H$. Moreover, $u$ is said to be a {\it prime element} of $H$ if $uH$ is a prime $s$-ideal of $H$. We denote by $\mathcal{A}(H)$ the set of atoms of $H$ and say that $H$ is {\it atomic} if every non-unit can be written as a finite product of atoms. For each non-unit $a\in H$, we let $\mathsf{L}_H(a)=\mathsf{L}(a)=\{k\in\mathbb{N}\mid a$ is a product of $k$ atoms of $H\}\subseteq\mathbb{N}$ be the {\it set of lengths} of $a$. Furthermore, we set $\mathsf{L}_H(a)=\mathsf{L}(a)=\{0\}$ for each $a\in H^{\times}$. An atomic monoid $H$ is said to be {\it factorial} if every atom of $H$ is a prime element and it is called {\it half-factorial} if $|\mathsf{L}(a)|=1$ for all $a\in H$. Observe that every factorial monoid is half-factorial.

We denote by
\begin{itemize}
\item $H^{\prime}=\{x\in K\mid$ there exists some $N\in\mathbb{N}$ such that $x^n\in H$ for all $n\geq N\}$ the {\it seminormal closure} of $H$ (also called the {\it seminormalization} of $H$), by
\item $\widetilde{H}=\{x\in K\mid x^N\in H$ for some $N\in\mathbb{N}\}$ the {\it root closure} of $H$, and by
\item $\widehat{H}=\{x\in K\mid$ there exists some $c\in H$, such that $cx^n\in H$ for all $n\in\mathbb{N}\}$ the {\it complete integral closure} of $H$.
\end{itemize}

We have $H\subseteq H^{\prime}\subseteq\widetilde{H}\subseteq\widehat{H}\subseteq K$. Furthermore, $H$ is said to be {\it seminormal} (resp., {\it root closed}, resp., {\it completely integrally closed}) if $H=H^{\prime}$ (resp., $H=\widetilde{H}$, resp., $H=\widehat{H}$). Let $A,B\subseteq K$ be subsets. We set $AB=\{ab\mid a\in A,b\in B\}$, $(A:B)=\{z\in K\mid zB\subseteq A\}$, $A^{-1}=(H:A)$, $A_v=(A^{-1})^{-1}$ and $A_t=\bigcup_{E\subseteq A,|E|<\infty} E_v$. If $A\subseteq H$, then $A$ is called a {\it $t$-ideal} (resp., {\it $v$-ideal}) of $H$ if $A_t=A$ (resp., {\it $A_v=A$}). A $t$-ideal $C$ of $H$ is said to be {\it $t$-invertible} if $(CC^{-1})_t=H$ and $C$ is called {\it $t$-finitely generated} if $C=E_t$ for some finite subset $E\subseteq C$. Note that every $t$-ideal is an $s$-ideal and every principal ideal is a $t$-invertible $t$-ideal. Let $\mathcal{C}_t(H)$ denote the $t$-class group of $H$. It measures how far $t$-invertible $t$-ideals are from being principal and it is trivial if and only if every $t$-invertible $t$-ideal is principal. For the precise definition of the $t$-class group we refer to \cite{HK98}. The monoid $H$ is said to be

\begin{itemize}
\item {\it Mori} if it satisfies the ascending chain condition on $t$-ideals,
\item {\it Krull} if it is a completely integrally closed Mori monoid,
\item {\it primary} if $H\ne H^{\times}$ and for all $a,b\in H\setminus H^{\times}$ there is $n\in\mathbb{N}$ such that $b^n\in aH$,
\item a {\it DVM} if $H$ is a primary monoid for which $H\setminus H^{\times}$ is a principal ideal of $H$, and
\item {\it finitely primary} if $H$ is a primary monoid, $(H:\widehat{H})\not=\emptyset$ and $\widehat{H}$ is factorial. If $H$ is finitely primary, then $|\mathfrak{X}(\widehat{H})|$ is called the {\it rank} of $H$ and each $\alpha\in\mathbb{N}$ for which $(\prod_{\mathfrak{p}\in\mathfrak{X}(\widehat{H})}\mathfrak{p})^{\alpha}\subseteq (H:\widehat{H})$, is called an {\it exponent} of $H$.
\end{itemize}

Note that $H$ is a Mori monoid if and only if $H$ satisfies that ascending chain condition on $v$-ideals. Also observe that the $t$-closure (and the $v$-closure) induce ideal systems in the sense of \cite{HK98}. All concepts involving $t$-ideals above (like $t$-invertibility and $t$-class group) can also be introduced in analogy for $v$-ideals. For our purposes the notion of $t$-ideals is the more useful notion. (This is the case, for instance, since the $t$-system is a finitary ideal system.) For more information on $t$-ideals, $v$-ideals and their relationships we refer to \cite{Ge-HK06, HK98}.

It is well known that a monoid is a DVM if and only if it is a primary Krull monoid if and only if it is a primary factorial monoid (cf. \cite[Theorem 2.3.8]{Ge-HK06}). Besides that, every DVM is a finitely primary monoid of rank one and exponent one. A monoid is factorial if and only if it is a Krull monoid with trivial $t$-class group and it is Krull if and only if every non-empty $t$-ideal is $t$-invertible. Furthermore, the root closure of an $s$-noetherian monoid is a Krull monoid.

Let $H$ and $B$ be monoids. A monoid homomorphism $\theta:H\to B$ is said to be a {\it transfer homomorphism} if the following two properties are satisfied.
\begin{enumerate}
\item[\textbf{(T1)}] $B=\theta(H) B^\times$ and $\theta^{-1}(B^\times)=H^\times$.
\item[\textbf{(T2)}] If $u\in H$, $b,c\in B$ and $\theta(u)=bc$, then there exist $v,w\in H$ and $\varepsilon,\eta\in B^{\times}$ such that $u=vw$,
		$\theta(v)=b\varepsilon$ and $\theta(w)=c\eta$.
\end{enumerate}

The monoid $H$ is said to be a {\it transfer Krull} monoid if there exists a Krull monoid $B$ and a transfer homomorphism $\theta:H\to B$. Since the identity homomorphism is trivially a transfer homomorphism, Krull monoids are transfer Krull, but transfer Krull monoids need neither be Mori, nor completely integrally closed. So far, the arithmetic of Krull monoids is very well understood and via transfer homomorphisms, the arithmetical properties of a Krull monoid $B$ are pulled back to the monoid $H$. For instance, transfer homomorphisms preserve the system of sets of lengths, whence all the invariants describing the structure of sets of lengths coincide. We refer the reader to the surveys \cite{Ge16, Ge-Zh20} for further details.

\begin{proposition}\label{proposition1} Let $H$ be a monoid with quotient group $K$. Then $H$ is a transfer Krull monoid if and only if there is a Krull monoid $T$ with $H\subseteq T\subseteq K$ such that the inclusion $H\hookrightarrow T$ is a transfer homomorphism. If this holds, then $K$ is the quotient group of $T$, $T=HT^{\times}$ and $T^{\times}\cap H=H^{\times}$.
\end{proposition}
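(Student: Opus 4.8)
The plan is to treat the two implications separately. The reverse direction is immediate: if there is a Krull monoid $T$ with $H\subseteq T\subseteq K$ for which the inclusion $H\hookrightarrow T$ is a transfer homomorphism, then $H$ is transfer Krull by definition (take $B=T$ and let $\theta$ be the inclusion). All the substance therefore lies in the forward implication and the three supplementary assertions, which I intend to obtain simultaneously by constructing a concrete Krull overmonoid of $H$ inside $K$.

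So suppose $\theta\colon H\to B$ is a transfer homomorphism with $B$ Krull, and let $L$ be a quotient group of $B$. Since $K$ is a quotient group of $H$ and $L$ is a group, the universal property of the quotient group yields a unique group homomorphism $\widetilde{\theta}\colon K\to L$ extending $\theta$ (explicitly $\widetilde{\theta}(ab^{-1})=\theta(a)\theta(b)^{-1}$ for $a,b\in H$, which is well defined because $\theta(b)\in B\subseteq L$ is invertible in $L$). I then set
\[
T=\widetilde{\theta}^{\,-1}(B)=\{x\in K\mid\widetilde{\theta}(x)\in B\}.
\]
This $T$ is a submonoid of $K$ with $H\subseteq T$ (since $\widetilde{\theta}(h)=\theta(h)\in B$ for $h\in H$), and a short computation gives $T^{\times}=\widetilde{\theta}^{\,-1}(B^{\times})$.

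The crux of the argument, and the step I expect to be the main obstacle, is to show that $T$ is Krull. The idea is to pull back the Krull structure of $B$ along $\widetilde{\theta}$. First I would check that the restriction $\widetilde{\theta}|_T\colon T\to B$ is a divisor homomorphism: if $\widetilde{\theta}(x)\mid_B\widetilde{\theta}(y)$ for $x,y\in T$, write $\widetilde{\theta}(y)=\widetilde{\theta}(x)d$ with $d\in B$; then $\widetilde{\theta}(yx^{-1})=d\in B$, so $yx^{-1}\in T$ and hence $x\mid_T y$. Since divisor homomorphisms compose and $B$, being Krull, admits a divisor homomorphism into a free abelian monoid (the standard characterization of Krull monoids, see \cite{Ge-HK06}), the composite gives a divisor homomorphism from $T$ into a free abelian monoid, and therefore $T$ is Krull.

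It remains to verify that the inclusion $\iota\colon H\hookrightarrow T$ is a transfer homomorphism and to read off the supplementary claims, which I expect to be routine consequences of \textbf{(T1)} and \textbf{(T2)} for $\theta$. For \textbf{(T1)}, the identity $\iota^{-1}(T^{\times})=H\cap T^{\times}=H^{\times}$ follows from $\theta^{-1}(B^{\times})=H^{\times}$, and $T=HT^{\times}$ follows by writing any $x\in T$ with $\widetilde{\theta}(x)=\theta(h)\varepsilon$ (using $B=\theta(H)B^{\times}$) and noting $xh^{-1}\in T^{\times}$. For \textbf{(T2)}, given $u=bc$ with $u\in H$ and $b,c\in T$, I would apply \textbf{(T2)} for $\theta$ to the factorization $\theta(u)=\widetilde{\theta}(b)\widetilde{\theta}(c)$ in $B$ to obtain $v,w\in H$ and $\varepsilon_0,\eta_0\in B^{\times}$ with $u=vw$, $\theta(v)=\widetilde{\theta}(b)\varepsilon_0$ and $\theta(w)=\widetilde{\theta}(c)\eta_0$; then $vb^{-1},wc^{-1}\in T^{\times}$ supply the units required by the inclusion. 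Finally, the relations $T=HT^{\times}$ and $T^{\times}\cap H=H^{\times}$ have just been established, and $K$ is a quotient group of $T$ because $H\subseteq T\subseteq K$ forces the quotient group of $T$ to sit between $K$ and $K$.
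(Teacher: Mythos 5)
Your proposal is correct. Keep in mind, though, that the paper itself contains no argument for this statement: its entire proof is the citation \cite[Proposition 5.3]{Ge-Zh20}, so what you have written is a self-contained reconstruction of a proof the paper only quotes, and it is essentially the natural (and, in substance, the cited source's) argument: extend $\theta$ to a group homomorphism $\widetilde{\theta}$ on quotient groups, pull $B$ back to $T=\widetilde{\theta}^{\,-1}(B)$, and transport the Krull property backwards. All the steps check out: $T^{\times}=\widetilde{\theta}^{\,-1}(B^{\times})$ holds; the restriction $\widetilde{\theta}|_{T}\colon T\to B$ is a divisor homomorphism, so composing it with a divisor homomorphism of $B$ into a free abelian monoid (the standard characterization of Krull monoids, see \cite[Theorem 2.4.8]{Ge-HK06}, which is the only non-formal ingredient you use and should be cited precisely) shows that $T$ is Krull; and your verifications of \textbf{(T1)} and \textbf{(T2)} for the inclusion $H\hookrightarrow T$ are correct. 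One presentational point: the supplementary assertions in the statement concern an \emph{arbitrary} Krull monoid $T$ with $H\subseteq T\subseteq K$ whose inclusion is a transfer homomorphism, not only the $T$ you construct. This costs nothing, since $T=HT^{\times}$ and $T^{\times}\cap H=H^{\times}$ are literally property \textbf{(T1)} for the inclusion map, and your quotient-group observation (the quotient group of $T$ computed inside $K$ is squeezed between that of $H$, which is $K$, and $K$ itself) applies verbatim to any such $T$; it would be cleaner to state these three facts for a general $T$ first and then carry out your construction for the forward implication. What your write-up buys over the paper's citation is self-containedness, at the price of invoking the divisor-homomorphism characterization of Krull monoids; everything else is bookkeeping with units.
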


\begin{proof} See \cite[Proposition 5.3]{Ge-Zh20}.
\end{proof}

In the following remark we collect several useful facts and provide a variety of situations in which a transfer Krull monoid is forced to be a Krull monoid or a half-factorial monoid.

\begin{remark}\label{remark1}
\begin{enumerate}
\item[(1)] Every half-factorial monoid is transfer Krull. Indeed, if $H$ is half-factorial, then $\theta:H\to (\mathbb{N}_0,+)$, $a\mapsto\max\mathsf{L}(a)$, is a transfer homomorphism.		
\item[(2)] Strongly primary monoids (i.e., primary monoids $H$ such that for each $x\in H$, there is an $n\in\mathbb{N}$ for which $(H\setminus H^{\times})^n\subseteq xH$) are transfer Krull if and only if they are half-factorial by \cite[Theorem 5.5]{Ge-Sc-Zh17}.
\item[(3)] Length-factorial monoids (i.e., atomic monoids $H$ such that for all $a\in H$ and $k\in\mathsf{L}(a)$, there is exactly one way (up to order and associates) to write $a$ as a product of $k$ atoms) are transfer Krull if and only if they are Krull by \cite[Corollary 1.5]{Ge-Zh21}.
\item[(4)] If $G$ is a finite group, then $\mathcal{B}({G})$ (i.e., the monoid of product-one sequences, cf. \cite[Definition 3.1]{Oh19}) is a reduced finitely generated monoid. Furthermore, $\mathcal{B}(G)$ is transfer Krull if and only if $\mathcal{B}(G)$ is Krull if and only if $G$ is abelian by \cite[Proposition 3.4]{Oh19}. For further recent results on the transfer Krull property for monoids of product-one sequences, we refer to \cite{Fa-Zh21, Ga-Li-Tr-Zh21}.
\item[(5)] If $R$ is a stable order in a Dedekind domain, then it follows from \cite[Theorem 5.10]{Ba-Ge-Re21} that the monoid of invertible ideals of $R$ (resp., the semigroup of nonzero ideals of $R$) is transfer Krull if and only if it is half-factorial.
\end{enumerate}
\end{remark}

Let $T$ be a monoid and let $H$ be a submonoid of $T$. We say that $H\subseteq T$ is {\it inert} if for all $x,y\in T$ with $xy\in H$, there is some $\varepsilon\in T^{\times}$ such that $x\varepsilon,y\varepsilon^{-1}\in H$. The concept of an inert extension was introduced in \cite{Co68} for ring extensions and studied in \cite{Mo21}. We adapt it for monoid extensions accordingly. Following the terminology of \cite{Ge-HK06}, $H\subseteq T$ is called {\it divisor-closed} if for all $x,y\in T$ with $xy\in H$, it follows that $x,y\in H$. Observe that $H\subseteq T$ is divisor-closed if and only if $H\subseteq T$ is inert and $T^{\times}=H^{\times}$.

\begin{lemma}\label{lemma1} Let $T$ be a monoid and let $H$ be a submonoid of $T$.
\begin{enumerate}
\item[(1)] $H\hookrightarrow T$ is a transfer homomorphism if and only if $T^{\times}\cap H=H^{\times}$, $T=HT^{\times}$ and $H\subseteq T$ is inert.
\item[(2)] If $T$ is an overmonoid of $H$, then $H\hookrightarrow T$ is a transfer homomorphism if and only if $T^{\times}\cap H=H^{\times}$ and $H\subseteq T$ is inert.
\item[(3)] $H\hookrightarrow\widetilde{H}$ is a transfer homomorphism if and only if $H\subseteq\widetilde{H}$ is inert.
\item[(4)] $H^{\prime}\hookrightarrow\widetilde{H}$ is a transfer homomorphism if and only if $H^{\prime}\subseteq\widetilde{H}$ is inert.
\end{enumerate}
\end{lemma}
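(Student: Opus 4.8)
The plan is to establish (1) directly from the definitions and then obtain (2), (3) and (4) as successive specializations. Throughout write $\iota$ for the relevant inclusion map. For (1), since $\iota$ is the inclusion we have $\iota(u)=u$ for all $u\in H$, so condition (T1) reads $T=\iota(H)T^{\times}=HT^{\times}$ together with $\iota^{-1}(T^{\times})=H\cap T^{\times}=H^{\times}$; that is, (T1) is precisely the two conditions $T=HT^{\times}$ and $T^{\times}\cap H=H^{\times}$. It therefore remains to identify (T2) with inertness.

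The crux of (1) is the following observation. Condition (T2) for $\iota$ says that whenever $u\in H$ factors as $u=bc$ with $b,c\in T$, there exist $v,w\in H$ and $\varepsilon,\eta\in T^{\times}$ with $u=vw$, $v=b\varepsilon$ and $w=c\eta$. Substituting the last two equalities into $u=vw$ gives $u=bc\,\varepsilon\eta=u\varepsilon\eta$, and cancelling $u$ forces $\eta=\varepsilon^{-1}$. Hence (T2) is equivalent to the statement that for all $b,c\in T$ with $bc\in H$ there is some $\varepsilon\in T^{\times}$ with $b\varepsilon,c\varepsilon^{-1}\in H$, which is exactly the definition of $H\subseteq T$ being inert. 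Both directions are immediate once one reads $x=b$, $y=c$, $u=xy$ in one direction and sets $v=b\varepsilon$, $w=c\varepsilon^{-1}$ in the other. This proves (1).

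For (2) it suffices, by (1), to show that when $T$ is an overmonoid of $H$ the condition $T=HT^{\times}$ is implied by inertness. Since $T$ is contained in the quotient group $K$ of $H$, an arbitrary $t\in T$ can be written as $t=ab^{-1}$ with $a,b\in H\subseteq T$, so that $tb=a\in H$; applying inertness to this factorization yields $\varepsilon\in T^{\times}$ with $t\varepsilon\in H$, whence $t=(t\varepsilon)\varepsilon^{-1}\in HT^{\times}$. As $HT^{\times}\subseteq T$ is trivial, this gives $T=HT^{\times}$, and (2) follows. For (3), by (2) it remains to verify that $\widetilde{H}^{\times}\cap H=H^{\times}$ always holds; the inclusion $\supseteq$ is clear, and if $a\in H$ satisfies $a^{-1}\in\widetilde{H}$ then $a^{-N}\in H$ for some $N\in\mathbb{N}$, so $a^{-1}=a^{N-1}a^{-N}\in H$ and thus $a\in H^{\times}$. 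Finally, (4) follows by applying (3) with $H^{\prime}$ in place of $H$: from $H\subseteq H^{\prime}\subseteq\widetilde{H}$ and the idempotence of the root closure one gets $\widetilde{H^{\prime}}=\widetilde{H}$ (indeed $\widetilde{H}\subseteq\widetilde{H^{\prime}}$ since $H\subseteq H^{\prime}$, and $\widetilde{H^{\prime}}\subseteq\widetilde{\widetilde{H}}=\widetilde{H}$ since $H^{\prime}\subseteq\widetilde{H}$), so (3) for $H^{\prime}$ states exactly that $H^{\prime}\hookrightarrow\widetilde{H}$ is a transfer homomorphism if and only if $H^{\prime}\subseteq\widetilde{H}$ is inert.

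The only genuinely substantive point is the cancellation observation in (1) that collapses (T2) to the inert condition; once this is in hand, the remaining work consists solely in checking that the two auxiliary hypotheses $T=HT^{\times}$ and $T^{\times}\cap H=H^{\times}$ become automatic in the overmonoid and root-closure settings, which is routine.
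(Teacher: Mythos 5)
Your proof is correct and follows essentially the same route as the paper: the cancellation step $u=u\varepsilon\eta\Rightarrow\eta=\varepsilon^{-1}$ identifying (T2) with inertness in (1), the quotient-group trick $tb\in H$ to get $T=HT^{\times}$ from inertness in (2), and the reduction of (3) and (4) to (2) via $\widetilde{H}^{\times}\cap H=H^{\times}$ and $\widetilde{H^{\prime}}=\widetilde{H}$. If anything, you supply slightly more detail than the paper, which leaves the unit-group facts in (3) and (4) unproved.
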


\begin{proof} (1) By definition, $H\hookrightarrow T$ is a transfer homomorphism if and only if (a) $T^{\times}\cap H=H^{\times}$, (b) $T=HT^{\times}$ and (c) for all $a\in H$ and $x,y\in T$ such that $a=xy$, there are some $x^{\prime},y^{\prime}\in H$ and $\varepsilon,\eta\in T^{\times}$ such that $a=x^{\prime}y^{\prime}$, $x=x^{\prime}\eta$ and $y=y^{\prime}\varepsilon$.

($\Rightarrow$) Let $H\hookrightarrow T$ be a transfer homomorphism. It remains to show that $H\subseteq T$ is inert. Let $x,y\in T$ be such that $xy\in H$. Set $a=xy$. Then there are some $x^{\prime},y^{\prime}\in H$ and $\varepsilon,\eta\in T^{\times}$ such that $a=x^{\prime}y^{\prime}$, $x=x^{\prime}\eta$ and $y=y^{\prime}\varepsilon$. It follows that $x^{\prime}y^{\prime}=a=xy=x^{\prime}\eta y^{\prime}\varepsilon$, and thus $\varepsilon\eta=1$. Consequently, $\eta=\varepsilon^{-1}$, and hence $x\varepsilon=x^{\prime}\in H$ and $y\varepsilon^{-1}=y^{\prime}\in H$.

($\Leftarrow$) Let $T^{\times}\cap H=H^{\times}$, let $T=HT^{\times}$ and let $H\subseteq T$ be inert. It remains to show that for all $a\in H$ and $x,y\in T$ such that $a=xy$, there are some $x^{\prime},y^{\prime}\in H$ and $\varepsilon,\eta\in T^{\times}$ such that $a=x^{\prime}y^{\prime}$, $x=x^{\prime}\eta$ and $y=y^{\prime}\varepsilon$.

Let $a\in H$ and $x,y\in T$ be such that $a=xy$. Then $xy\in H$, and hence $x\varepsilon,y\varepsilon^{-1}\in H$ for some $\varepsilon\in T^{\times}$. Set $\eta=\varepsilon^{-1}$, $x^{\prime}=x\varepsilon$ and $y^{\prime}=y\varepsilon^{-1}$. Then $x^{\prime},y^{\prime}\in H$, $\varepsilon,\eta\in T^{\times}$, $a=x^{\prime}y^{\prime}$, $x=x^{\prime}\eta$ and $y=y^{\prime}\varepsilon$.

(2) Let $T$ be an overmonoid of $H$. By (1) it suffices to show that if $H\subseteq T$ is inert, then $T\subseteq HT^{\times}$. Let $H\subseteq T$ be inert and let $y\in T$. Since $T$ is an overmonoid of $H$, there is some $x\in H$ such that $xy\in H$. Therefore, there is some $\varepsilon\in T^{\times}$ such that $x\varepsilon,y\varepsilon^{-1}\in H$. We infer that $y=y\varepsilon^{-1}\varepsilon\in HT^{\times}$.

\medskip
(3) This is an immediate consequence of (2) and the fact that $\widetilde{H}$ is an overmonoid of $H$ and $\widetilde{H}^{\times}\cap H=H^{\times}$.

\medskip
(4) Observe that $\widetilde{H^{\prime}}=\widetilde{H}$, and hence $\widetilde{H}^{\times}\cap H^{\prime}={H^{\prime}}^{\times}$. Now this is an easy consequence of (2).
\end{proof}

\begin{proposition}\label{proposition2} Let $H$ be an $s$-noetherian monoid. The following statements are equivalent.
\begin{enumerate}
\item[(1)] $H$ is transfer Krull.
\item[(2)] There is some root closed overmonoid $T$ of $H$ such that $H\hookrightarrow T$ is a transfer homomorphism.
\item[(3)] There is some overmonoid $T$ of $H$ such that $T\subseteq\widetilde{T}$ is inert and $H\hookrightarrow T$ is a transfer homomorphism.
\end{enumerate}
\end{proposition}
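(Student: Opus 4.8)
The plan is to establish the cycle $(1)\Rightarrow(2)\Rightarrow(3)\Rightarrow(1)$, with the first two implications essentially formal and the last one carrying all the weight. For $(1)\Rightarrow(2)$ I would invoke Proposition \ref{proposition1}: being transfer Krull yields a Krull monoid $T$ with $H\subseteq T\subseteq K$ for which $H\hookrightarrow T$ is a transfer homomorphism; since a Krull monoid is completely integrally closed and $H\subseteq\widetilde{H}\subseteq\widehat{H}$, such a $T$ satisfies $T=\widetilde{T}$, i.e. it is root closed, which is exactly (2). For $(2)\Rightarrow(3)$, a root closed $T$ satisfies $T=\widetilde{T}$, so $T\subseteq\widetilde{T}$ is inert trivially (choose $\varepsilon=1$), while the transfer homomorphism $H\hookrightarrow T$ is untouched; thus the same $T$ witnesses (3).

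The substance is $(3)\Rightarrow(1)$. First I would record that, since $H\hookrightarrow T$ is a transfer homomorphism, Lemma \ref{lemma1}(1) gives $T=HT^{\times}$ and $T^{\times}\cap H=H^{\times}$. The crucial point is that $T$ is again $s$-noetherian: the canonical map $H\to T_{\rm red}$, $a\mapsto aT^{\times}$, is surjective because $T=HT^{\times}$, and it factors through $H_{\rm red}$ (as $H^{\times}\subseteq T^{\times}$), exhibiting $T_{\rm red}$ as a homomorphic image of $H_{\rm red}$. Since $H$ is $s$-noetherian, $H_{\rm red}$ is finitely generated, hence so is its image $T_{\rm red}$, and therefore $T$ is $s$-noetherian. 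Consequently $\widetilde{T}$ is a Krull monoid. I would also note that the quotient group of $T$ equals $K$ (as $H\subseteq T\subseteq K$), so $\widetilde{T}$ is an overmonoid of $H$ inside $K$.

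It then remains to show that $H\hookrightarrow\widetilde{T}$ is a transfer homomorphism, for which, by Lemma \ref{lemma1}(2), I must verify $\widetilde{T}^{\times}\cap H=H^{\times}$ and that $H\subseteq\widetilde{T}$ is inert. The first follows from the general fact $\widetilde{T}^{\times}\cap T=T^{\times}$ (the same fact used for $H$ in the proof of Lemma \ref{lemma1}(3): if $u\in T$ and $u^{-1}\in\widetilde{T}$, then a power of $u$ is a unit of $T$, forcing $u\in T^{\times}$) together with $T^{\times}\cap H=H^{\times}$. The heart of the matter, and the step I expect to be the main obstacle, is the transitivity of inertness: given $x,y\in\widetilde{T}$ with $xy\in H\subseteq T$, inertness of $T\subseteq\widetilde{T}$ yields $\delta\in\widetilde{T}^{\times}$ with $x\delta,y\delta^{-1}\in T$; then $(x\delta)(y\delta^{-1})=xy\in H$ with both factors in $T$, so inertness of $H\subseteq T$ yields $\mu\in T^{\times}$ with $x\delta\mu,y\delta^{-1}\mu^{-1}\in H$, and $\varepsilon:=\delta\mu\in\widetilde{T}^{\times}$ does the job. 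With $H\subseteq\widetilde{T}$ inert and $\widetilde{T}$ Krull, Lemma \ref{lemma1}(2) makes $H\hookrightarrow\widetilde{T}$ a transfer homomorphism into a Krull monoid, so $H$ is transfer Krull. The only genuinely delicate bookkeeping is checking that the two chosen units compose to a unit of $\widetilde{T}$ and that the appeal to $s$-noetherianity of $T$ is not circular, which is why establishing that $T_{\rm red}$ is finitely generated up front is essential.
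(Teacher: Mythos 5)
Your proof is correct, and it uses the same two pillars as the paper (Proposition~\ref{proposition1} plus Lemma~\ref{lemma1} for the formal implications, and the theorem that a root closed $s$-noetherian monoid is Krull), but it is organized differently. The paper proves the two equivalences $(1)\Leftrightarrow(2)$ and $(2)\Leftrightarrow(3)$: for $(2)\Rightarrow(1)$ it writes $H=[E\cup H^{\times}]$ with $E$ finite and uses $T=HT^{\times}$ to get $T=[E\cup T^{\times}]$, so that $T$ itself is a root closed $s$-noetherian monoid and hence Krull; for $(3)\Rightarrow(2)$ it invokes Lemma~\ref{lemma1}(3) to make $T\hookrightarrow\widetilde{T}$ a transfer homomorphism and then silently uses the standard fact that a composition of transfer homomorphisms is again a transfer homomorphism. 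You instead close the cycle $(3)\Rightarrow(1)$ directly: your surjection $H_{\rm red}\twoheadrightarrow T_{\rm red}$ is just a repackaging of the paper's generating-set argument (both transfer $s$-noetherianity along $T=HT^{\times}$), and your hand-verified transitivity of inertness (composing $\delta\in\widetilde{T}^{\times}$ and $\mu\in T^{\times}\subseteq\widetilde{T}^{\times}$) together with the verification $\widetilde{T}^{\times}\cap T=T^{\times}$ amounts to proving, in this special case, exactly the composition fact the paper takes for granted. The trade-off: the paper's route is shorter because it outsources the composition of transfer homomorphisms to known theory, while yours is more self-contained, at the cost of redoing at the level of inert extensions what Lemma~\ref{lemma1} already encapsulates; a small aesthetic difference is that the paper lands on the Krull monoid $T$ (resp.\ $\widetilde{T}$ as a witness for (2)), whereas you must additionally observe that $\widetilde{T}$, being the root closure of an $s$-noetherian monoid, is Krull --- a fact the preliminaries of the paper also record, so nothing is missing.
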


\begin{proof} (1) $\Rightarrow$ (2): Let $H$ be transfer Krull. It follows from Proposition~\ref{proposition1} that there is some overmonoid $T$ of $H$ such that $T$ is Krull monoid and $H\hookrightarrow T$ is a transfer homomorphism. Since $T$ is a Krull monoid, we have that $T$ is root closed.

\medskip
(2) $\Rightarrow$ (1): Let $T$ be a root closed overmonoid of $H$ such that $H\hookrightarrow T$ is a transfer homomorphism. We infer by Lemma~\ref{lemma1}(2) that $H\hookrightarrow T$ is a transfer homomorphism, and thus $T=HT^{\times}$. There is some finite subset $E\subseteq H$ such that $H=[E\cup H^{\times}]$. Since $T=HT^{\times}$, it follows that $T=[E\cup T^{\times}]$. Therefore, $T$ is a root closed $s$-noetherian monoid, and thus $T$ is a Krull monoid by \cite[Theorem 2.7.14]{Ge-HK06}. Since $H\hookrightarrow T$ is a transfer homomorphism, we have that $H$ is transfer Krull.

\medskip
(2) $\Rightarrow$ (3): This is obvious.

\medskip
(3) $\Rightarrow$ (2): Let $T$ be an overmonoid of $H$ such that $T\subseteq\widetilde{T}$ is inert and $H\hookrightarrow T$ is a transfer homomorphism. Then $T\hookrightarrow\widetilde{T}$ is a transfer homomorphism by Lemma~\ref{lemma1}(3), and hence $H\hookrightarrow\widetilde{T}$ is a transfer homomorphism. Now the statement follows, since $\widetilde{T}$ is a root closed overmonoid of $H$.
\end{proof}

\begin{proposition}\label{proposition3} Let $H$ be a monoid with quotient group $K$.
\begin{enumerate}
\item[(1)] If $H\subseteq\widetilde{H}$ is inert and $\widetilde{H}=\{x\in K\mid x^k\in H\}$ for some $k\in\mathbb{N}$, then $H^{\prime}\subseteq\widetilde{H}$ is inert.
\item[(2)] If $H$ is $s$-noetherian, then $H^{\prime}$ is $s$-noetherian.
\item[(3)] If $H$ is $s$-noetherian and $H\subseteq\widetilde{H}$ is inert, then $H^{\prime}$ is $s$-noetherian and $H^{\prime}\subseteq\widetilde{H}$ is inert.
\end{enumerate}
\end{proposition}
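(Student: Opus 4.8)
The three parts are largely independent, except that part~(3) will combine parts~(1) and~(2) with one extra observation. Throughout write $K$ for the quotient group of $H$.

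For part~(1), assume $H\subseteq\widetilde H$ is inert and $\widetilde H=\{x\in K:x^k\in H\}$ for some $k\in\mathbb N$; the case $k=1$ forces $\widetilde H=H$ and is trivial, so assume $k\geq 2$. The decisive consequence of the hypothesis is that $z^k\in H$ for \emph{every} $z\in\widetilde H$ and, applying this to a unit $u\in\widetilde H^\times$ and to $u^{-1}$, that $u^k\in H^\times$ for every $u\in\widetilde H^\times$. Now let $x,y\in\widetilde H$ with $xy\in H^\prime$, and pick $N$ with $(xy)^n\in H$ for all $n\geq N$. I would choose $n\geq N$ with $n\equiv 1\pmod k$, so that $\gcd(n,k)=1$, and apply inertness of $H\subseteq\widetilde H$ to the factorisation $(xy)^n=x^n y^n$, obtaining a single $\varepsilon\in\widetilde H^\times$ with $x^n\varepsilon\in H$ and $y^n\varepsilon^{-1}\in H$. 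Writing $n-1=kt$ gives $\varepsilon^{n-1}=(\varepsilon^k)^t\in H^\times$, whence $(x\varepsilon)^n=(x^n\varepsilon)\varepsilon^{n-1}\in H$ and likewise $(y\varepsilon^{-1})^n\in H$; moreover $(x\varepsilon)^k=x^k\varepsilon^k\in H$ and $(y\varepsilon^{-1})^k=y^k\varepsilon^{-k}\in H$. Since $\gcd(n,k)=1$, the numerical semigroup generated by $n$ and $k$ contains all sufficiently large integers, so $(x\varepsilon)^m\in H$ and $(y\varepsilon^{-1})^m\in H$ for all large $m$, i.e. $x\varepsilon,y\varepsilon^{-1}\in H^\prime$. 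The main obstacle is exactly the synchronisation of the correcting unit across different powers (inertness only yields $\varepsilon$ to the first power, whereas $(x\varepsilon)^n$ needs $\varepsilon^n$); this is overcome by the coprimality of $n$ and $k$ together with $\varepsilon^k\in H^\times$.

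For part~(2), the plan is to show that $H^\prime_{\mathrm{red}}$ is finitely generated. Since $H$ is $s$-noetherian, $\widetilde H$ is Krull and $\widetilde H_{\mathrm{red}}$ is finitely generated. I would first check that the conductor $(\widetilde H:H)$ is nonempty, fix $c\in(\widetilde H:H)\cap H$, and observe $c\widetilde H\subseteq H\subseteq H^\prime\subseteq\widetilde H$. The point is that $c\widetilde H$ is an $s$-ideal of the finitely generated monoid $\widetilde H_{\mathrm{red}}$ whose complement $\widetilde H\setminus c\widetilde H$ is finite modulo $\widetilde H^\times$; hence $H^\prime$ is generated modulo units by generators of $c\widetilde H$ together with these finitely many residues, so $H^\prime_{\mathrm{red}}$ is finitely generated and $H^\prime$ is $s$-noetherian. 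The routine-but-essential points here are the nonemptiness of the conductor and the finiteness of the residues of $c\widetilde H$ in $\widetilde H_{\mathrm{red}}$.

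For part~(3), the $s$-noetherianity of $H^\prime$ is precisely part~(2), and for inertness of $H^\prime\subseteq\widetilde H$ I would invoke part~(1); it then remains to verify its hypothesis that $\widetilde H=\{x\in K:x^k\in H\}$ for some $k$. The plan is to exploit that $K/H^\times$, being the quotient group of the finitely generated monoid $H_{\mathrm{red}}$, is a finitely generated abelian group, so its torsion subgroup $T$ is finite, of some exponent $e$. Since $H\subseteq\widetilde H$ is inert and $\widetilde H$ is the root closure of $H$, every $u\in\widetilde H^\times$ satisfies $u^m\in H\cap\widetilde H^\times=H^\times$ for some $m$, so the image of $u$ in $K/H^\times$ is torsion and lies in $T$; hence $u^e\in H^\times$ for all $u\in\widetilde H^\times$. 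Combining $e$ with a common exponent for a finite generating set of $\widetilde H_{\mathrm{red}}$ produces a single $k$ with $z^k\in H$ for all $z\in\widetilde H$, which is the required hypothesis, and part~(1) then yields that $H^\prime\subseteq\widetilde H$ is inert. The crux of part~(3) is this uniform exponent bound, which rests on the finiteness of the torsion of the finitely generated group $K/H^\times$.
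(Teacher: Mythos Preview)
Your arguments for parts~(1) and~(3) are correct and follow essentially the same strategy as the paper. For~(1) the paper also applies inertness to a single high power of $xy$ and then uses two coprime exponents (it takes $kN+1$ and $2kN+1$ rather than your $n$ and $k$, but the mechanism is identical). For~(3) the paper cites directly that $\widetilde H/H^{\times}$ is finitely generated, which lets one pick a single $k$ with $e^k\in H$ for every generator $e$; your detour through the torsion of $K/H^{\times}$ to handle $\widetilde H^{\times}$ separately reaches the same conclusion with a little extra work (and note that inertness is not actually needed in that step---the root-closure property alone gives $u^m\in\widetilde H^{\times}\cap H=H^{\times}$).

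Part~(2), however, has a genuine gap. First a notational slip: you write $(\widetilde H:H)$ but clearly mean $(H:\widetilde H)$, since you then use $c\widetilde H\subseteq H$. More seriously, the claim that $\widetilde H\setminus c\widetilde H$ is finite modulo $\widetilde H^{\times}$ is false. Take for instance $H=[\{a^2,b^2,ab,a^2b,ab^2\}]$ inside the free monoid on $\{a,b\}$ (this is Example~6.3 of the paper): here $\widetilde H$ is the free monoid on $\{a,b\}$, $\widetilde H^{\times}=\{1\}$, and $c=ab$ lies in $(H:\widetilde H)$, but $\widetilde H\setminus ab\widetilde H=\{a^n,b^m:n,m\in\mathbb N_0\}$ is infinite. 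So your finiteness-of-complement argument cannot produce a finite generating set for $H'_{\mathrm{red}}$.

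The paper's proof of~(2) avoids this by a direct ACC argument: once you have $c\in(H:H')$ (which follows from $(H:\widetilde H)\neq\emptyset$), any ascending chain $(\mathfrak a_i)$ of $s$-ideals of $H'$ gives an ascending chain $(c\mathfrak a_i)$ of $s$-ideals of $H$, which stabilizes by $s$-noetherianity of $H$; cancelling $c$ then stabilizes the original chain. This is both shorter and does not require any structural control on the complement of the conductor ideal.
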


\begin{proof} (1) Let $H\subseteq\widetilde{H}$ be inert and let $k\in\mathbb{N}$ be such that $\widetilde{H}=\{x\in K\mid x^k\in H\}$. It remains to show that for all $x,y\in\widetilde{H}$ with $xy\in H^{\prime}$, there is some $\varepsilon\in\widetilde{H}^{\times}$ such that $x\varepsilon,y\varepsilon^{-1}\in H^{\prime}$. Let $x,y\in\widetilde{H}$ be such that $xy\in H^{\prime}$. Then there is some $N\in\mathbb{N}$ such that $(xy)^n\in H$ for each $n\in\mathbb{N}_{\geq N}$, and hence $(xy)^{kN+1}\in H$. Observe that there is some $\varepsilon\in\widetilde{H}^{\times}$ such that $x^{kN+1}\varepsilon,y^{kN+1}\varepsilon^{-1}\in H$. We have that $(x\varepsilon)^{kN+1}=x^{kN+1}\varepsilon (\varepsilon^N)^k\in H$, $(x\varepsilon)^{2kN+1}=x^{kN+1}\varepsilon (x^N\varepsilon^{2N})^k\in H$, $(y\varepsilon^{-1})^{kN+1}=y^{kN+1}\varepsilon^{-1} (\varepsilon^{-N})^k\in H$ and $(y\varepsilon^{-1})^{2kN+1}=y^{kN+1}\varepsilon^{-1} (y^N\varepsilon^{-2N})^k\in H$. Since $kN+1$ and $2kN+1$ are relatively prime, we infer that $x\varepsilon,y\varepsilon^{-1}\in H^{\prime}$.

\medskip
(2) Let $H$ be $s$-noetherian. Then $(H:\widetilde{H})\not=\emptyset$ by \cite[Propositions 2.7.4 and 2.7.11 and Theorem 2.7.13]{Ge-HK06}. Therefore, $(H:H^{\prime})\not=\emptyset$. Let $(\mathfrak{a}_i)_{i\in\mathbb{N}}$ be an ascending chain of $s$-ideals of $H^{\prime}$. If $x\in (H:H^{\prime})$, then $(x\mathfrak{a}_i)_{i\in\mathbb{N}}$ is an ascending chain of $s$-ideals of $H$, and hence there is some $N\in\mathbb{N}$ such that $x\mathfrak{a}_i=x\mathfrak{a}_N$ for each integer $i\geq N$. Consequently, $\mathfrak{a}_i=\mathfrak{a}_N$ for each integer $i\geq N$.

\medskip
(3) Let $H$ be $s$-noetherian and let $H\subseteq\widetilde{H}$ be inert. It is an immediate consequence of \cite[Propositions 2.7.4 and 2.7.11 and Theorem 2.7.13]{Ge-HK06} that there is a finite subset $E\subseteq\widetilde{H}$ such that $\widetilde{H}=[E\cup H^{\times}]$ (note that $\widetilde{H}/H^{\times}$ is finitely generated). Since $E$ is finite, there is clearly some $k\in\mathbb{N}$ such that $e^k\in H$ for each $e\in E$. It is straightforward to show that $\widetilde{H}=\{x\in K\mid x^k\in H\}$. Now the statement follows from (1) and (2).
\end{proof}

Next we provide a generalization of \cite[Proposition 3.7.5.1]{Ge-HK06}. For the definition of ideal systems, we refer to \cite{HK98}. For a monoid $H$ and an ideal system $r$ on $H$ let $r$-$\max(H)$ denote the {\it set of $r$-maximal $r$-ideals} of $H$. Note that the $s$-system, the $t$-system and the $v$-system (they are induced by the $s$-ideals, $t$-ideals and $v$-ideals, respectively) are important examples of ideal systems on monoids.

\begin{proposition}\label{proposition4} Let $H$ be a monoid, let $r$ be an ideal system on $H$ and let $T$ be an overmonoid of $H$ such that $T^{\times}\cap H=H^{\times}$ and $T=HT^{\times}$. If $(H:T)\in r$-$\max(H)$, then $H\hookrightarrow T$ is a transfer homomorphism.
\end{proposition}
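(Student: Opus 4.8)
The plan is to reduce the statement to the inertness of $H\subseteq T$ and then to extract the required unit from the maximality of the conductor $\mathfrak{f}:=(H:T)$. Since $T$ is an overmonoid of $H$ and $T^{\times}\cap H=H^{\times}$ holds by hypothesis, Lemma~\ref{lemma1}(2) shows that $H\hookrightarrow T$ is a transfer homomorphism if and only if $H\subseteq T$ is inert. So it suffices to take $x,y\in T$ with $xy\in H$ and to produce some $\delta\in T^{\times}$ with $x\delta,y\delta^{-1}\in H$.

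The technical heart of the argument is the following observation about ideal quotients: for every $z\in K$ the set $\mathfrak{e}_z:=\{h\in H\mid hz\in H\}=H\cap z^{-1}H$ is an $r$-ideal of $H$. Indeed, by the translation invariance of the ideal system $r$ one has $(z^{-1}H)_r=z^{-1}H_r=z^{-1}H$, so $z^{-1}H$ is $r$-closed in $K$; since $H$ is $r$-closed and arbitrary intersections of $r$-closed sets are $r$-closed, $\mathfrak{e}_z$ is an $r$-ideal contained in $H$. I expect this to be the main obstacle, since it is the only place where the axioms of a \emph{general} ideal system (not merely the $s$-system) are needed, and it must be invoked carefully from \cite{HK98}. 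I also record the inclusion $\mathfrak{f}\subseteq\mathfrak{e}_z$ for $z\in T$: if $h\in(H:T)$ and $z\in T$, then $hz\in hT\subseteq H$, whence $h\in\mathfrak{e}_z$.

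With these tools I split into two cases according to whether $x\in\mathfrak{f}$. If $x\in\mathfrak{f}=(H:T)$, then $x\in H$ and $xT\subseteq H$; writing $y=b\eta$ with $b\in H$ and $\eta\in T^{\times}$ (possible since $T=HT^{\times}$) and putting $\delta=\eta$ gives $y\delta^{-1}=b\in H$ and $x\delta=x\eta\in xT\subseteq H$. If $x\notin\mathfrak{f}$, I write $x=a\varepsilon$ with $a\in H$ and $\varepsilon\in T^{\times}$, and first note $a\notin\mathfrak{f}$: were $a\in(H:T)$, then $xT=aT\subseteq H$ would force $x\in(H:T)=\mathfrak{f}$. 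Now set $z=y\varepsilon\in T$ and consider $\mathfrak{e}:=\mathfrak{e}_z=\{h\in H\mid h\,y\varepsilon\in H\}$. By the previous paragraph $\mathfrak{e}$ is an $r$-ideal with $\mathfrak{f}\subseteq\mathfrak{e}$, and $a\in\mathfrak{e}$ because $a\,y\varepsilon=(a\varepsilon)y=xy\in H$; since $a\notin\mathfrak{f}$ this yields $\mathfrak{f}\subsetneq\mathfrak{e}$. As $\mathfrak{f}$ is $r$-maximal, the only $r$-ideal strictly containing it is $H$, so $\mathfrak{e}=H$ and hence $1\in\mathfrak{e}$, i.e. $y\varepsilon\in H$. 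Taking $\delta=\varepsilon^{-1}$ then gives $x\delta=a\in H$ and $y\delta^{-1}=y\varepsilon\in H$. In both cases $H\subseteq T$ is inert, which finishes the proof.

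Finally, I would point out where each hypothesis enters: the maximality of $\mathfrak{f}$ is exactly what upgrades $\mathfrak{f}\subsetneq\mathfrak{e}$ to $\mathfrak{e}=H$, the identity $T=HT^{\times}$ supplies the decompositions $x=a\varepsilon$ and $y=b\eta$, and $T^{\times}\cap H=H^{\times}$ is used only to apply Lemma~\ref{lemma1}(2). Notably, neither primality of $\mathfrak{f}$ nor finitariness of $r$ is required, so the argument makes no use of the finitary character of the $t$-system.
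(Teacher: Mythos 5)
Your proposal is correct, and while it shares the paper's overall skeleton (reduce to inertness via Lemma~\ref{lemma1}(2), split into cases according to membership in the conductor $\mathfrak{f}=(H:T)$, exploit $r$-maximality), the way maximality is exploited is genuinely different. You form the colon set $\mathfrak{e}_z=\{h\in H\mid hz\in H\}$, prove it is an $r$-ideal, note that in the nontrivial case it contains $\mathfrak{f}$ properly (it contains $a$), and conclude $\mathfrak{e}_z=H$, so $1\in\mathfrak{e}_z$, i.e. $y\varepsilon\in H$. The paper never forms this ideal: writing $x=u\alpha$, $y=v\beta$ with $u,v\in H$ and $\alpha,\beta\in T^{\times}$, it uses maximality only through the identity $(uH\cup\mathfrak{f})_r=H$, chooses $t\in H$ with $ty\alpha\in H$, and computes $ty\alpha\in ty\alpha(uH\cup\mathfrak{f})_r=(txyH\cup ty\alpha\,\mathfrak{f})_r\subseteq(txyH\cup tH)_r=tH$, whence $y\alpha\in H$ by cancellativity. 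The paper's computation stays entirely inside subsets of $H$ and uses only the translation-invariance axiom $(cX)_r=cX_r$ for $c\in H$; your route is arguably more conceptual (maximality enters exactly once, through an ideal that visibly witnesses the conclusion), but it forces you to know that colon sets are $r$-closed.

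That is also where the one imprecision in your write-up lies, precisely at the spot you flagged as the main obstacle. You justify that $\mathfrak{e}_z=H\cap z^{-1}H$ is an $r$-ideal by writing $(z^{-1}H)_r=z^{-1}H_r$. For $z\in T$ the set $z^{-1}H$ need not be contained in $H$, so $(z^{-1}H)_r$ is not defined by the ideal system on $H$, and translation invariance as axiomatized concerns factors $c\in H$; to argue as you do, you must invoke the extension of $r$ to fractional subsets of $K$ (available in \cite{HK98}, but an extra layer of machinery). The gap is routine to repair without any fractional ideals: since $K$ is the quotient group of $H$, write $z=pq^{-1}$ with $p,q\in H$; then $\mathfrak{e}_z=\{h\in H\mid hp\in qH\}$, so $p\,\mathfrak{e}_z\subseteq qH$, hence $p(\mathfrak{e}_z)_r\subseteq(p\,\mathfrak{e}_z)_r\subseteq(qH)_r=qH$, which together with $(\mathfrak{e}_z)_r\subseteq H_r=H$ gives $(\mathfrak{e}_z)_r\subseteq\mathfrak{e}_z$. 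With that repair your argument is complete, and your closing remark is accurate: like the paper's proof, it uses neither primality of $\mathfrak{f}$ nor the finitary character of $r$.
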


\begin{proof} Let $(H:T)\in r$-$\max(H)$. By Lemma~\ref{lemma1}(2), it suffices to show that for all $x,y\in T$ with $xy\in H$ there is some $\varepsilon\in T^{\times}$ such that $x\varepsilon,y\varepsilon^{-1}\in H$. Let $x,y\in T$ be such that $xy\in H$. There are some $\alpha,\beta\in T^{\times}$ and $u,v\in H$ such that $x=u\alpha$ and $y=v\beta$.

Case 1: $u\in (H:T)$. Then $x\beta=u\alpha\beta\in H$ and $y\beta^{-1}=v\in H$.

Case 2: $u\not\in (H:T)$. Then $(uH\cup (H:T))_r=H$. There is some $t\in H$ such that $ty\alpha\in H$. It follows that $ty\alpha\in ty\alpha H=ty\alpha (uH\cup (H:T))_r=(ty\alpha uH\cup ty\alpha (H:T))_r=(txyH\cup ty\alpha (H:T))_r\subseteq (txyH\cup tH)_r=tH$, and hence $y\alpha\in H$. Finally, observe that $x\alpha^{-1}=u\in H$.
\end{proof}

\begin{proposition}\label{proposition5} Let $T$ be a monoid and let $H\subseteq T$ be a submonoid.
\begin{enumerate}
\item[(1)] If $T^{\times}\cap H=H^{\times}$ and $T=HT^{\times}$, then $\mathcal{A}(T)\subseteq\{u\varepsilon\mid u\in\mathcal{A}(H),\varepsilon\in T^{\times}\}$.
\item[(2)] If $H\hookrightarrow T$ is a transfer homomorphism, then $\mathcal{A}(T)=\{u\varepsilon\mid u\in\mathcal{A}(H),\varepsilon\in T^{\times}\}$.
\item[(3)] If $T$ is atomic and $\mathcal{A}(T)\subseteq\{u\varepsilon\mid u\in\mathcal{A}(H),\varepsilon\in T^{\times}\}$, then $T=HT^{\times}$.
\item[(4)] If $H$ is atomic and $\mathcal{A}(H)\subseteq\mathcal{A}(T)$, then $T^{\times}\cap H=H^{\times}$.
\end{enumerate}
\end{proposition}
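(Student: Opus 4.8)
The plan is to treat the four parts largely independently, using the two structural conditions $T^{\times}\cap H=H^{\times}$ and $T=HT^{\times}$, and invoking atomicity only where it is hypothesized. For (1), I would start with an atom $a\in\mathcal{A}(T)$ and use $T=HT^{\times}$ to write $a=u\varepsilon$ with $u\in H$ and $\varepsilon\in T^{\times}$; the goal is to show $u\in\mathcal{A}(H)$. First $u\notin H^{\times}$, since $u\in H^{\times}\subseteq T^{\times}$ would force $a\in T^{\times}$, contradicting that $a$ is an atom. Next, given any factorization $u=bc$ with $b,c\in H$, I would rewrite $a=b(c\varepsilon)$ inside $T$; atomicity of $a$ makes one of $b$, $c\varepsilon$ a unit of $T$, and then $T^{\times}\cap H=H^{\times}$ forces the corresponding factor ($b$, or $c=(c\varepsilon)\varepsilon^{-1}$) to lie in $H^{\times}$. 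Hence $u$ admits no factorization into two non-units of $H$, so $u\in\mathcal{A}(H)$, which gives the desired inclusion.

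For (2), recall from Lemma~\ref{lemma1}(1) that a transfer homomorphism $H\hookrightarrow T$ is exactly the conjunction of $T^{\times}\cap H=H^{\times}$, $T=HT^{\times}$, and inertness of $H\subseteq T$. The inclusion $\mathcal{A}(T)\subseteq\{u\varepsilon\mid u\in\mathcal{A}(H),\varepsilon\in T^{\times}\}$ is then immediate from (1). For the reverse inclusion I would take $u\in\mathcal{A}(H)$ and $\varepsilon\in T^{\times}$ and show $u\varepsilon\in\mathcal{A}(T)$. That $u\varepsilon$ is a non-unit follows as in (1). Given a factorization $u\varepsilon=bc$ in $T$, I would multiply by $\varepsilon^{-1}$ to obtain $u=b(c\varepsilon^{-1})\in H$ with $b,c\varepsilon^{-1}\in T$, and then apply inertness to produce some $\eta\in T^{\times}$ with $b\eta,\,c\varepsilon^{-1}\eta^{-1}\in H$ and $(b\eta)(c\varepsilon^{-1}\eta^{-1})=u$. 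Since $u\in\mathcal{A}(H)$, one of these two factors lies in $H^{\times}\subseteq T^{\times}$, whence the corresponding $b$ or $c$ is a unit of $T$; thus $u\varepsilon$ is an atom of $T$.

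For (3), atomicity of $T$ lets me factor every non-unit $t\in T$ as a product of atoms $t=a_1\cdots a_n$. By hypothesis each $a_i=u_i\varepsilon_i$ with $u_i\in\mathcal{A}(H)\subseteq H$ and $\varepsilon_i\in T^{\times}$, so $t=(u_1\cdots u_n)(\varepsilon_1\cdots\varepsilon_n)\in HT^{\times}$; units of $T$ clearly lie in $HT^{\times}$ as well, giving $T\subseteq HT^{\times}$ and hence $T=HT^{\times}$. For (4), the inclusion $H^{\times}\subseteq T^{\times}\cap H$ is trivial. Conversely, if some $x\in T^{\times}\cap H$ were a non-unit of $H$, atomicity of $H$ would yield $x=a_1\cdots a_n$ with $n\geq 1$ and each $a_i\in\mathcal{A}(H)\subseteq\mathcal{A}(T)$; but in a cancellative monoid a factor of a unit of $T$ is again a unit of $T$, contradicting that $a_1$ is an atom of $T$. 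Hence $T^{\times}\cap H=H^{\times}$.

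I expect the only genuinely delicate point to be the reverse inclusion in (2): one must feed the relation $u=b(c\varepsilon^{-1})$ into the inert condition and track the unit $\eta$ carefully, so that the adjusted factorization $(b\eta)(c\varepsilon^{-1}\eta^{-1})$ lands inside $H$ and can be tested against the irreducibility of $u$. The remaining arguments are routine manipulations with the two hypotheses and with atomic factorizations.
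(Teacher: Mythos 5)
Your proposal is correct and follows essentially the same route as the paper: part (1) via $T=HT^{\times}$ and $T^{\times}\cap H=H^{\times}$, part (3) by factoring into atoms, part (4) by contradiction with an atomic factorization, and part (2) by combining (1) with the fact that atoms of $H$ remain atoms of $T$. The only cosmetic difference is in (2), where the paper cites \cite[Proposition 3.2.3.2]{Ge-HK06} for $\mathcal{A}(H)\subseteq\mathcal{A}(T)$, while you re-derive this inclusion (in the slightly stronger form $u\varepsilon\in\mathcal{A}(T)$) directly from inertness via Lemma~\ref{lemma1}(1), which makes your argument self-contained but is the same underlying idea.
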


\begin{proof} (1) Let $T^{\times}\cap H=H^{\times}$ and $T=HT^{\times}$. Let $v\in\mathcal{A}(T)$. There are some $u\in H$ and $\varepsilon\in T^{\times}$ such that $v=u\varepsilon$. It remains to show that $u\in\mathcal{A}(H)$. Clearly, $u\not\in H^{\times}$ (for if $u\in H^{\times}$, then $v\in T^{\times}$). Now let $a,b\in H$ be such that $u=ab$. Then $v=a\varepsilon b$, and hence $a\varepsilon\in T^{\times}$ or $b\in T^{\times}$, so $a\in T^{\times}\cap H=H^{\times}$ or $b\in T^{\times}\cap H=H^{\times}$.

\medskip
(2) Let $H\hookrightarrow T$ be a transfer homomorphism. It is well known that $\mathcal{A}(H)\subseteq\mathcal{A}(T)$ (e.g. see \cite[Proposition 3.2.3.2]{Ge-HK06}). Now the statement follows from (1).

\medskip
(3) Let $T$ be atomic and let $\mathcal{A}(T)\subseteq\{u\varepsilon\mid u\in\mathcal{A}(H),\varepsilon\in T^{\times}\}$. Let $x\in T$. Without restriction let $x\not\in T^{\times}$. Then $x=\prod_{i=1}^n u_i$ for some $n\in\mathbb{N}$ and atoms $u_i$ of $T$. For each $i\in [1,n]$ there are some $v_i\in\mathcal{A}(H)$ and $\varepsilon_i\in T^{\times}$ such that $u_i=v_i\varepsilon_i$. We infer that $x=(\prod_{i=1}^n v_i)(\prod_{i=1}^n\varepsilon_i)\in HT^{\times}$.

\medskip
(4) Let $H$ be atomic and let $\mathcal{A}(H)\subseteq\mathcal{A}(T)$. Let $x\in T^{\times}\cap H$. Assume that $x\not\in H^{\times}$. Then $x=ua$ for some $u\in\mathcal{A}(H)$ and $a\in H$. Since $x\in T^{\times}$, we infer that $u\in T^{\times}$, which contradicts the fact that $u\in\mathcal{A}(T)$.
\end{proof}

\begin{remark}\label{remark2} Let $H\subseteq L\subseteq T$ be monoids such that $H\hookrightarrow T$ is a transfer homomorphism.
\begin{enumerate}
\item[(1)] If $T^{\times}\cap L=L^{\times}$ and $L=HL^{\times}$, then $L\hookrightarrow T$ is a transfer homomorphism.
\item[(2)] If $\widetilde{H}\subseteq T$ and $\widetilde{H}=H\widetilde{H}^{\times}$, then $\widetilde{H}\hookrightarrow T$ is a transfer homomorphism.
\end{enumerate}
\end{remark}

\begin{proof} (1) Let $T^{\times}\cap L=L^{\times}$ and $L=HL^{\times}$. Since $T=HT^{\times}$, we infer that $T=LT^{\times}$. By Lemma~\ref{lemma1}(1), it remains to show that $L\subseteq T$ is inert. Let $x,y\in T$ be such that $xy\in L$. There are some $z\in H$ and $\eta\in L^{\times}$ such that $xy=z\eta$. We have that $x\eta^{-1}y=z\in H$ and $x\eta^{-1},y\in T$. Therefore, $x\eta^{-1}\varepsilon,y\varepsilon^{-1}\in H$ for some $\varepsilon\in T^{\times}$. It follows that $x\varepsilon,y\varepsilon^{-1}\in L$.

(2) This is an easy consequence of (1) and the fact that $T^{\times}\cap\widetilde{H}=\widetilde{H}^{\times}$. (For more details see the proof of Theorem~\ref{theorem1} below.)
\end{proof}

\section{Transfer Krull monoids and property (U)}

Let $H$ be a monoid with quotient group $K$. Then $H$ is called a {\it valuation monoid} if for each $x\in K$ we have that $x\in H$ or $x^{-1}\in H$. Observe that $H$ is a DVM if and only if $H$ is an atomic valuation monoid and $H\not=H^{\times}$. For a submonoid $S$ of $H$ let $S^{-1}H=\{s^{-1}x\mid s\in S,x\in H\}$ which is clearly an overmonoid of $H$. Note that if $S\subseteq H$ is a submonoid and $T=S^{-1}H$, then $T=HT^{\times}$. For each prime $s$-ideal $\mathfrak{p}$ of $H$ and each overmonoid $T$ of $H$ we set $H_{\mathfrak{p}}=(H\setminus\mathfrak{p})^{-1}H$ and $T_{\mathfrak{p}}=(H\setminus\mathfrak{p})^{-1}T$.

Next we introduce an ``ad-hoc'' property which is motivated by the concept of QR-domains (i.e., domains for which every overring is a quotient overring). (QR-domains will be discussed in Section 5.) We say that $H$ satisfies {\it property (U)} if for each overmonoid $T$ of $H$ with $T=HT^{\times}$, there is some submonoid $S\subseteq H$ such that $T=S^{-1}H$. Note that if $H$ is a valuation monoid, then for each overmonoid $T$ of $H$, there is some submonoid $S\subseteq H$ such that $T=S^{-1}H$. In particular, every valuation monoid satisfies property (U).

(We show that each overmonoid of a valuation monoid $H$ is of the form $S^{-1}H$ for some submonoid $S$ of $H$. Let $H$ be a valuation monoid, let $T$ be an overmonoid of $H$ and set $S=\{x\in H\mid x^{-1}\in T\}$. Then $S\subseteq H$ is a submonoid and $S^{-1}H\subseteq T$. It suffices to show that $T\subseteq S^{-1}H$. Let $z\in T$. Then $z\in H$ or $z^{-1}\in H$. If $z\in H$, then $z\in S^{-1}H$. If $z^{-1}\in H$, then $z^{-1}\in S$, and hence $z=(z^{-1})^{-1}\in S^{-1}\subseteq S^{-1}H$.)

\begin{theorem}\label{theorem1} Let $H$ be a monoid such that $\widetilde{H}$ satisfies property (U).
\begin{enumerate}
\item[(1)] $H$ is transfer Krull if and only if $\widetilde{H}$ is a Krull monoid and $H\subseteq\widetilde{H}$ is inert.
\item[(2)] The following statements are equivalent.
\begin{enumerate}
\item[(a)] $H$ is half-factorial.
\item[(b)] $\widetilde{H}$ is half-factorial and $H$ is transfer Krull.
\item[(c)] $\widetilde{H}$ is half-factorial and $H\subseteq\widetilde{H}$ is inert.
\end{enumerate}
\end{enumerate}
\end{theorem}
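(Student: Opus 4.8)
The plan is to reduce both parts to one structural statement: under property (U), any Krull overmonoid witnessing the transfer Krull property must collapse onto $\widetilde{H}$ itself. For part (1), the implication from right to left needs no appeal to property (U): if $\widetilde{H}$ is Krull and $H\subseteq\widetilde{H}$ is inert, then Lemma~\ref{lemma1}(3) makes $H\hookrightarrow\widetilde{H}$ a transfer homomorphism into a Krull monoid, so $H$ is transfer Krull. For the converse I would start from Proposition~\ref{proposition1}: there is a Krull overmonoid $T$ of $H$ with $H\subseteq T\subseteq K$, $T=HT^{\times}$ and $T^{\times}\cap H=H^{\times}$, such that $H\hookrightarrow T$ is a transfer homomorphism. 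Since $T$ is Krull it is root closed, whence $\widetilde{H}\subseteq\widetilde{T}=T$; together with $T=HT^{\times}$ this gives $T=\widetilde{H}T^{\times}$. A short computation (the one already used in the proof of Remark~\ref{remark2}(2), which I would record as a preliminary fact) shows $T^{\times}\cap\widetilde{H}=\widetilde{H}^{\times}$: if $x\in\widetilde{H}$ with $x^{N}\in H$ and $x\in T^{\times}$, then $x^{N}\in T^{\times}\cap H=H^{\times}$, so $x^{-1}=x^{N-1}(x^{N})^{-1}\in\widetilde{H}$.

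Now property (U) enters. Since $T$ is an overmonoid of $\widetilde{H}$ with $T=\widetilde{H}T^{\times}$, property (U) of $\widetilde{H}$ furnishes a submonoid $S\subseteq\widetilde{H}$ with $T=S^{-1}\widetilde{H}$. Every $s\in S$ lies in $\widetilde{H}\subseteq T$ and has $s^{-1}\in S^{-1}\widetilde{H}=T$, so $s\in T^{\times}\cap\widetilde{H}=\widetilde{H}^{\times}$; hence $S\subseteq\widetilde{H}^{\times}$ and $T=S^{-1}\widetilde{H}\subseteq\widetilde{H}$. With $\widetilde{H}\subseteq T$ this forces $T=\widetilde{H}$. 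In particular $\widetilde{H}$ is Krull, and $H\hookrightarrow\widetilde{H}=T$ being a transfer homomorphism gives, via Lemma~\ref{lemma1}(3), that $H\subseteq\widetilde{H}$ is inert. This is the crux of the theorem.

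For part (2) I would run the cycle (a)$\Rightarrow$(b)$\Rightarrow$(c)$\Rightarrow$(a). Assume (a). Then $H$ is transfer Krull by Remark~\ref{remark1}(1), so by part (1) the monoid $\widetilde{H}$ is Krull (hence atomic) and $H\subseteq\widetilde{H}$ is inert; in particular $\widetilde{H}=H\widetilde{H}^{\times}$. Every $b\in\widetilde{H}$ thus equals $a\varepsilon$ with $a\in H$ and $\varepsilon\in\widetilde{H}^{\times}$, and since the transfer homomorphism $H\hookrightarrow\widetilde{H}$ preserves sets of lengths, $\mathsf{L}_{\widetilde{H}}(b)=\mathsf{L}_{\widetilde{H}}(a)=\mathsf{L}_{H}(a)$ is a singleton; hence $\widetilde{H}$ is half-factorial and (b) holds. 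For (b)$\Rightarrow$(c), part (1) applied to the transfer Krull monoid $H$ immediately gives that $H\subseteq\widetilde{H}$ is inert, while $\widetilde{H}$ is half-factorial by hypothesis. For (c)$\Rightarrow$(a), Lemma~\ref{lemma1}(3) makes $H\hookrightarrow\widetilde{H}$ a transfer homomorphism into the atomic half-factorial monoid $\widetilde{H}$, and preservation of sets of lengths transports both atomicity (nonemptiness of length sets) and the singleton property of all length sets back to $H$, so $H$ is half-factorial.

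The main obstacle is the converse of part (1): everything hinges on forcing the a priori unknown Krull overmonoid $T$ to equal $\widetilde{H}$, and the only available leverage is property (U). The delicate points are verifying $T^{\times}\cap\widetilde{H}=\widetilde{H}^{\times}$ (so that the localizing set $S$ consists of units of $\widetilde{H}$) and confirming $T=\widetilde{H}T^{\times}$ (so that property (U) is actually applicable). Once $T=\widetilde{H}$ is established, part (1) together with both nontrivial implications of part (2) follow formally from Lemma~\ref{lemma1}(3) and the fact that transfer homomorphisms preserve sets of lengths.
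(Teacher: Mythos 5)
Your proof is correct and takes essentially the same route as the paper: Proposition~\ref{proposition1} supplies the Krull overmonoid $T$, the identity $T^{\times}\cap\widetilde{H}=\widetilde{H}^{\times}$ forces the localizing set $S$ into $\widetilde{H}^{\times}$ so that property (U) collapses $T$ onto $\widetilde{H}$, and Lemma~\ref{lemma1} then yields inertness, exactly as in the paper's argument. For part (2) you run the same cycle (a)$\Rightarrow$(b)$\Rightarrow$(c)$\Rightarrow$(a); the only difference is that you argue the transfer of (half-)factoriality directly via preservation of sets of lengths, where the paper cites \cite[Proposition 3.2.3]{Ge-HK06} for that fact.
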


\begin{proof} (1) Clearly, if $\widetilde{H}$ is a Krull monoid and $H\subseteq\widetilde{H}$ is inert, then $H$ is transfer Krull by Lemma~\ref{lemma1}(3). Now let $H$ be transfer Krull. By Proposition~\ref{proposition1} there is some overmonoid $T$ of $H$ which is a Krull monoid such that $H\hookrightarrow T$ is a transfer homomorphism. Note that $T^{\times}\cap H=H^{\times}$ and $T=HT^{\times}$. Since $H\subseteq T$ and $T$ is Krull, we have that $\widetilde{H}\subseteq\widetilde{T}=T$, and thus $T=HT^{\times}\subseteq\widetilde{H}T^{\times}\subseteq T$. Therefore, $T=\widetilde{H}T^{\times}$, and hence $T=S^{-1}\widetilde{H}$ for some submonoid $S\subseteq\widetilde{H}$.

Next we show that $T^{\times}\cap\widetilde{H}=\widetilde{H}^{\times}$. ($\subseteq$) Let $x\in T^{\times}\cap\widetilde{H}$. There is some $k\in\mathbb{N}$ such that $x^k\in H$. We infer that $x^k\in T^{\times}\cap H=H^{\times}\subseteq\widetilde{H}^{\times}$. Since $x\in\widetilde{H}$, it follows that $x\in\widetilde{H}^{\times}$. ($\supseteq$) This is clearly satisfied.

Observe that $S\subseteq T^{\times}$. Consequently, $S\subseteq T^{\times}\cap\widetilde{H}=\widetilde{H}^{\times}$, and thus $T=S^{-1}\widetilde{H}=\widetilde{H}$. Therefore, $\widetilde{H}$ is a Krull monoid and $H\subseteq\widetilde{H}$ is inert by Lemma~\ref{lemma1}(3).

\medskip
(2) (a) $\Rightarrow$ (b): It is well known that $H$ is transfer Krull. By (1) and Lemma~\ref{lemma1}(2) we have that $H\hookrightarrow\widetilde{H}$ is a transfer homomorphism, and thus $\widetilde{H}$ is half-factorial by \cite[Proposition 3.2.3]{Ge-HK06}.

(b) $\Rightarrow$ (c): This follows from (1).

(c) $\Rightarrow$ (a): Note that $H\hookrightarrow\widetilde{H}$ is a transfer homomorphism by Lemma~\ref{lemma1}(3). Now the statement follows from \cite[Proposition 3.2.3]{Ge-HK06}.
\end{proof}

\begin{corollary}\label{corollary1} Let $H$ be a monoid such that $\widetilde{H}$ is half-factorial and satisfies property (U). The following statements are equivalent.
\begin{enumerate}
\item[(1)] $H$ is transfer Krull.
\item[(2)] $H$ is half-factorial.
\item[(3)] $H\subseteq\widetilde{H}$ is inert.
\end{enumerate}
If these equivalent conditions are satisfied, then $\widetilde{H}$ is a Krull monoid.
\end{corollary}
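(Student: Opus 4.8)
The plan is to read this off directly from Theorem~\ref{theorem1}, whose hypothesis (that $\widetilde{H}$ satisfies property (U)) is exactly part of the assumption made here. Since $\widetilde{H}$ is moreover assumed to be half-factorial, the clauses appearing in parts (b) and (c) of Theorem~\ref{theorem1}(2) that demand ``$\widetilde{H}$ is half-factorial'' are automatically fulfilled and may be dropped: condition (b) collapses to ``$H$ is transfer Krull'' and condition (c) collapses to ``$H\subseteq\widetilde{H}$ is inert''. Hence the chain of equivalences (a)$\Leftrightarrow$(b)$\Leftrightarrow$(c) of Theorem~\ref{theorem1}(2) becomes precisely the equivalence of the present conditions (2), (1) and (3), matching (a) with (2), (b) with (1), and (c) with (3). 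This already establishes that the three listed statements are equivalent.

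For the final assertion I would argue as follows. Suppose the equivalent conditions hold; then in particular $H$ is transfer Krull by (1). Because $\widetilde{H}$ satisfies property (U), Theorem~\ref{theorem1}(1) is applicable and states that $H$ is transfer Krull if and only if $\widetilde{H}$ is a Krull monoid and $H\subseteq\widetilde{H}$ is inert. Reading off the forward implication, we conclude that $\widetilde{H}$ is a Krull monoid, as claimed. (Note that one cannot simply invoke the fact that the root closure of an $s$-noetherian monoid is Krull, since $H$ is not assumed to be $s$-noetherian here; it is Theorem~\ref{theorem1}(1) that does the work.)

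The argument is essentially bookkeeping, so I expect no genuine obstacle. The only point requiring care is to verify that half-factoriality of $\widetilde{H}$ really does neutralise the corresponding clauses in Theorem~\ref{theorem1}(2) -- that is, that one is entitled to delete ``$\widetilde{H}$ is half-factorial'' from (b) and (c) -- and then to line up conditions (a),(b),(c) with the reordered conditions (2),(1),(3) without error. There is no need to reprove inertness, the transfer homomorphism properties, or the Krull conclusion from scratch, since all of this is already packaged inside Theorem~\ref{theorem1}.
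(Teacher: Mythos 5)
Your proposal is correct and matches the paper's argument: the paper also derives this corollary as an immediate consequence of Theorem~\ref{theorem1}, with the equivalences obtained from Theorem~\ref{theorem1}(2) (the half-factoriality clauses being absorbed by the hypothesis) and the Krull conclusion from Theorem~\ref{theorem1}(1). Your write-up simply makes explicit the bookkeeping that the paper leaves to the reader.
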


\begin{proof} This is an immediate consequence of Theorem~\ref{theorem1}.
\end{proof}

\begin{corollary}\label{corollary2} Let $H$ be a monoid such that $\widetilde{H}$ is a DVM (e.g. $H$ is an $s$-noetherian finitely primary monoid of rank one). The following statements are equivalent.
\begin{enumerate}
\item[(1)] $H$ is transfer Krull.
\item[(2)] $H$ is half-factorial.
\item[(3)] $H\subseteq\widetilde{H}$ is inert.
\end{enumerate}
\end{corollary}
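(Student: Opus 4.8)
The plan is to obtain the equivalence as a direct consequence of Corollary~\ref{corollary1}, whose hypotheses ask that $\widetilde{H}$ be half-factorial and satisfy property (U). Hence the first step is to verify that a DVM meets both requirements. As observed before Theorem~\ref{theorem1}, a DVM is an atomic valuation monoid with $\widetilde{H}\neq\widetilde{H}^{\times}$; in particular it is a valuation monoid, and every valuation monoid satisfies property (U). Furthermore, a DVM is a primary factorial monoid (as recalled among the preliminaries), so it is factorial and therefore half-factorial. Thus $\widetilde{H}$ is half-factorial and satisfies property (U), and Corollary~\ref{corollary1} immediately yields the equivalence of (1), (2) and (3).

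It then remains to justify the parenthetical claim that the root closure of an $s$-noetherian finitely primary monoid of rank one is a DVM. I would prove the more transparent statement that the root closure of any $s$-noetherian primary monoid is a DVM, of which the stated example is a special case (a finitely primary monoid is primary by definition). Since $H$ is $s$-noetherian, $\widetilde{H}$ is a Krull monoid. By the characterization recalled among the preliminaries, a primary Krull monoid is exactly a DVM, so it suffices to show that $\widetilde{H}$ is primary.

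To see that the root closure of a primary monoid $H$ is primary, I would argue as follows. First, $\widetilde{H}\neq\widetilde{H}^{\times}$: any non-unit $a$ of $H$ remains a non-unit of $\widetilde{H}$, since $a^{M}\in\widetilde{H}^{\times}$ would force $a\in\widetilde{H}^{\times}$ and hence $a\in H^{\times}$, a contradiction. Next, given $x,y\in\widetilde{H}\setminus\widetilde{H}^{\times}$, choose $k,l\in\mathbb{N}$ with $x^{k},y^{l}\in H$; as just noted, $x,y\notin\widetilde{H}^{\times}$ forces $x^{k},y^{l}\notin H^{\times}$, so $x^{k},y^{l}\in H\setminus H^{\times}$. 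Primality of $H$ then yields $m\in\mathbb{N}$ with $y^{lm}=(y^{l})^{m}\in x^{k}H\subseteq x\widetilde{H}$, which shows that $\widetilde{H}$ is primary and completes the argument.

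The equivalence itself requires essentially no work beyond quoting the preliminaries and Corollary~\ref{corollary1}, so the only place carrying genuine content is the last step, namely the fact that passing to the root closure preserves primality. The mild subtlety there is the unit bookkeeping---deducing $x^{k}\notin H^{\times}$ from $x\notin\widetilde{H}^{\times}$---after which the power-lifting step is routine.
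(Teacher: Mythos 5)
Your proof of the equivalence coincides exactly with the paper's: both reduce to Corollary~\ref{corollary1} after noting that a DVM is a factorial (hence half-factorial) valuation monoid and that valuation monoids satisfy property (U). Your additional argument for the parenthetical example (the root closure of an $s$-noetherian primary monoid is primary and Krull, hence a DVM) goes beyond what the paper proves and is essentially sound, the one blemish being that your justification that a non-unit of $H$ stays a non-unit of $\widetilde{H}$ is circular as written (you deduce $a\in H^{\times}$ from $a\in\widetilde{H}^{\times}$, which presupposes the claim); the correct argument is that $a\in\widetilde{H}^{\times}\cap H$ gives $a^{-M}\in H$ for some $M\in\mathbb{N}$, whence $a^{-1}=a^{M-1}a^{-M}\in H$, and in any case this fact ($\widetilde{H}^{\times}\cap H=H^{\times}$) is one the paper itself invokes without proof in Lemma~\ref{lemma1}(3).
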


\begin{proof} This is an immediate consequence of Corollary~\ref{corollary1}, since DVMs are half-factorial and valuation monoids satisfy property (U).
\end{proof}

\begin{corollary}\label{corollary3} Let $H$ be a monoid such that $\widetilde{H}$ is half-factorial and satisfies property (U). Then $H$ is a Krull monoid if and only if $H$ is root closed.
\end{corollary}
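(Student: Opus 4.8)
The plan is to prove the two implications separately: the forward implication follows directly from the definition of a Krull monoid together with the standard inclusion chain $H\subseteq\widetilde{H}\subseteq\widehat{H}$, while the reverse implication is obtained by activating the concluding clause of Corollary~\ref{corollary1}. Notably, the hypotheses on $\widetilde{H}$ (half-factoriality and property (U)) will only be needed for the reverse direction.

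For the forward direction, I would assume that $H$ is a Krull monoid. By definition a Krull monoid is completely integrally closed, so $H=\widehat{H}$. Combining this with the inclusions $H\subseteq\widetilde{H}\subseteq\widehat{H}$ established in the preliminaries yields $H\subseteq\widetilde{H}\subseteq\widehat{H}=H$, forcing $\widetilde{H}=H$. Thus $H$ is root closed, and this step uses no assumption on $\widetilde{H}$ whatsoever.

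For the reverse direction, I would assume that $H$ is root closed, i.e., $\widetilde{H}=H$. Then the extension $H\subseteq\widetilde{H}$ is trivially inert: given $x,y\in\widetilde{H}=H$ with $xy\in H$, the choice $\varepsilon=1\in\widetilde{H}^{\times}$ satisfies $x\varepsilon,y\varepsilon^{-1}\in H$. Hence condition (3) of Corollary~\ref{corollary1} holds. Since $\widetilde{H}$ is half-factorial and satisfies property (U) by hypothesis, Corollary~\ref{corollary1} applies, and its final assertion guarantees that $\widetilde{H}$ is a Krull monoid. As $\widetilde{H}=H$, we conclude that $H$ is Krull.

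I do not expect a genuine obstacle here, since the statement is a direct corollary. The only points requiring care are recognizing that complete integral closedness forces root closedness through the inclusion chain $H\subseteq\widetilde{H}\subseteq\widehat{H}$, and observing that the trivial inertness of the extension $H\subseteq H$ is precisely what is needed to invoke the concluding clause of Corollary~\ref{corollary1} in the reverse direction.
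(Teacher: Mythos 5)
Your proof is correct and takes essentially the same route as the paper: the forward direction is the trivial chain $H\subseteq\widetilde{H}\subseteq\widehat{H}=H$, and the reverse direction activates the concluding clause of Corollary~\ref{corollary1}. The only cosmetic difference is that the paper enters Corollary~\ref{corollary1} through condition (2) (since $H=\widetilde{H}$ is half-factorial by hypothesis) while you enter through condition (3) (trivial inertness of $H\subseteq H$); as the conditions are equivalent, this is immaterial.
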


\begin{proof} Clearly, if $H$ is a Krull monoid, then $H$ is root closed. Now let $H$ be root closed. Then $H=\widetilde{H}$ is half-factorial. Therefore, $H=\widetilde{H}$ is a Krull monoid by Corollary~\ref{corollary1}.
\end{proof}

Let $H$ be a monoid. Then $H$ is called a {\it GCD-monoid} if each two elements $a,b\in H$ have a greatest common divisor (i.e., there is some $t\in H$ such that $t\mid_H a$ and $t\mid_H b$ and for all $s\in H$ with $s\mid_H a$ and $s\mid_H b$, it follows that $s\mid_H t$). Note that $H$ is a GCD-monoid if and only if every $t$-finitely generated $t$-ideal of $H$ is principal by \cite[Theorem 11.5(iii)]{HK98} (since the $v$-finitely generated $v$-ideals are precisely the $t$-finitely generated $t$-ideals). It is well known that factorial monoids and valuation monoids are GCD-monoids. For a thorough introduction to GCD-monoids, we refer to \cite{HK98}.

\begin{proposition}\label{proposition6} Let $H$ be a monoid. Then $H$ is a valuation monoid if and only if $H$ is a GCD-monoid which satisfies property (U).
\end{proposition}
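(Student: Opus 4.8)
The forward implication is already contained in the discussion preceding Theorem~\ref{theorem1}: if $H$ is a valuation monoid, then $H$ is a GCD-monoid (valuation monoids are well known to be GCD-monoids, since any two elements are comparable under $\mid_H$), and $H$ satisfies property (U) (this is exactly the parenthetical computation given before Theorem~\ref{theorem1}). So the plan is to concentrate on the reverse implication, which is the substantive direction.

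Assume $H$ is a GCD-monoid satisfying property (U), and suppose for contradiction that $H$ is not a valuation monoid. Then there is some $x\in K$ with $x\notin H$ and $x^{-1}\notin H$. First I would reduce $x$ to lowest terms: writing $x=a/b$ with $a,b\in H$ and dividing out a greatest common divisor (which exists since $H$ is a GCD-monoid), I may assume that $a$ and $b$ have a unit as greatest common divisor. Since $x\notin H$ and $x^{-1}\notin H$, neither $a$ nor $b$ lies in $H^{\times}$.

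The key construction is the overmonoid $T=[H\cup\{x,x^{-1}\}]$, whose elements are precisely the $hx^n$ with $h\in H$ and $n\in\mathbb{Z}$. Because $x\in T^{\times}$, each such element lies in $HT^{\times}$, so $T=HT^{\times}$ and property (U) applies, yielding a submonoid $S\subseteq H$ with $T=S^{-1}H$. I would then compute $T^{\times}$ explicitly. The heart of the matter is the coprimality bookkeeping: if $hx^n$ is a unit of $T$, then $hh'=x^k$ for some $h'\in H$ and $k\in\mathbb{Z}$, and clearing denominators gives $b^{|k|}\mid_H a^{|k|}$ (or the symmetric statement with $a$ and $b$ interchanged). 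Since coprimality of $a,b$ forces coprimality of $a^{|k|},b^{|k|}$, this makes $b^{|k|}$ (resp. $a^{|k|}$) a unit, whence $k=0$ and $h\in H^{\times}$. Thus $T^{\times}=\langle H^{\times},x\rangle$, and the same computation shows $T^{\times}\cap H=H^{\times}$.

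Finally I would exploit that no localization can have this unit group. For any localization $T=S^{-1}H$, the set $P=T^{\times}\cap H$ is a divisor-closed submonoid of $H$ and $T^{\times}=\langle P\rangle$: writing a unit as $h/s$ with $s\in S\subseteq P$ forces $h=(h/s)s$ to be a unit of $T$ as well, hence $h\in P$. In our situation $P=H^{\times}$, so $T^{\times}=\langle H^{\times}\rangle=H^{\times}$; but $x\in T^{\times}\setminus H^{\times}$, a contradiction. Hence $H$ is a valuation monoid. I expect the main obstacle to be the explicit determination of $T^{\times}$ together with the identity $T^{\times}\cap H=H^{\times}$, since this is precisely where the GCD hypothesis is genuinely used (via stability of coprimality under taking powers and the implication $b^k\mid_H a^k\Rightarrow b^k\in H^{\times}$). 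The remaining steps — the reduction to coprime form, the verification $T=HT^{\times}$, and the unit-group identity $T^{\times}=\langle T^{\times}\cap H\rangle$ for localizations — are routine.
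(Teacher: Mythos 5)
Your proof is correct, and its skeleton coincides with the paper's: both reduce to a coprime fraction $a/b$ (the paper phrases this as the claim that of two relatively prime elements of $H$ one must be a unit), both introduce the very same overmonoid $T=\{h(a/b)^k\mid h\in H,\ k\in\mathbb{Z}\}$, observe $T=HT^{\times}$, and invoke property (U) to write $T=S^{-1}H$. Where you diverge is the endgame. The paper stays element-wise: from $a/b=u/s$ with $s\in S$ it applies Euclid's lemma for GCD-monoids (\cite[Proposition 10.2(ii)]{HK98}) to get $b\mid_H s$, hence $b^{-1}\in T$, then writes $b^{-1}=h(a/b)^k$ and finishes by a case analysis on the sign of $k$. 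You instead determine the whole unit group: the coprimality bookkeeping forces $T^{\times}=\{\varepsilon(a/b)^{n}\mid\varepsilon\in H^{\times},\ n\in\mathbb{Z}\}$ and $T^{\times}\cap H=H^{\times}$, and you contrast this with the general fact that the units of any localization $S^{-1}H$ are generated by $T^{\times}\cap H$, obtaining $T^{\times}=H^{\times}$ and a contradiction with $a/b\in T^{\times}\setminus H^{\times}$. Both finishes rest on the same GCD ingredient (coprimality of $a,b$ is inherited by their powers, which is iterated Euclid), so neither is more demanding; yours is slightly longer, but it isolates a reusable structural fact about unit groups of localizations, whereas the paper's is shorter and never needs to compute $T^{\times}$ at all. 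If you write your version up, the one step that deserves an explicit proof or citation is precisely the stability of coprimality under powers in a GCD-monoid, playing the role that \cite[Proposition 10.2(ii)]{HK98} plays in the paper.
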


\begin{proof} Clearly, every valuation monoid is a GCD-monoid which satisfies property (U). Now let $H$ be a GCD-monoid which satisfies property (U). Since $H$ is a GCD-monoid, it suffices to show that for all relatively prime $x,y\in H$, it follows that $x\in H^{\times}$ or $y\in H^{\times}$. Let $x,y\in H$ be relatively prime (i.e., for each $t\in H$ with $t\mid_H x$ and $t\mid_H y$, we have that $t\in H^{\times}$). Set $T=\{a(\frac{x}{y})^k\mid a\in H,k\in\mathbb{Z}\}$. Observe that $T$ is an overmonoid of $H$ such that $T=HT^{\times}$. Consequently, there is some submonoid $S\subseteq H$ such that $T=S^{-1}H$. Since $\frac{x}{y}\in T$, there are some $u\in H$ and $s\in S$ such that $\frac{x}{y}=\frac{u}{s}$. We infer that $xs=yu$. Since $H$ is a GCD-monoid and $x$ and $y$ are relatively prime, there is some $w\in H$ such that $s=yw$ by \cite[Proposition 10.2(ii)]{HK98}. Therefore, $y^{-1}=\frac{w}{s}\in S^{-1}H=T$. It follows that $y^{-1}=a(\frac{x}{y})^k$ for some $a\in H$ and $k\in\mathbb{Z}$. If $k\geq 1$, then $y^{k-1}=ax^k$, and hence $x\in H^{\times}$ (since $x\mid_H y^{k-1}$, $H$ is a GCD-monoid and $x$ and $y$ are relatively prime). Now let $k\leq 0$ and set $n=-k$. Note that $n\in\mathbb{N}_0$ and $x^n=ay^{n+1}$. It follows that $y\in H^{\times}$ (since $y\mid_H x^n$, $H$ is a GCD-monoid and $x$ and $y$ are relatively prime).
\end{proof}

\section{Transfer Krull monoids with (half-)factorial root closure}

In this section we investigate when a monoid whose root closure is (half-)factorial is a transfer Krull monoid. As a consequence we characterize when a weakly factorial monoid whose root closure is Krull is transfer Krull.

\begin{lemma}\label{lemma2} Let $H$ be a monoid.
\begin{enumerate}
\item[(1)] $\mathcal{A}(H)\subseteq\mathcal{A}(\widetilde{H})$ if and only if there is some overmonoid $T$ of $\widetilde{H}$ such that $\mathcal{A}(H)\subseteq\mathcal{A}(T)$ and $T^{\times}\cap H=H^{\times}$.
\item[(2)] If $H$ is atomic, then $\mathcal{A}(H)\subseteq\mathcal{A}(\widetilde{H})$ if and only if there is some overmonoid $T$ of $\widetilde{H}$ such that $\mathcal{A}(H)\subseteq\mathcal{A}(T)$.
\item[(3)] If $H$ is transfer Krull, then $\mathcal{A}(H)\subseteq\mathcal{A}(\widetilde{H})$.
\end{enumerate}
\end{lemma}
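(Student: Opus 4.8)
The plan is to prove (1) in full and then reduce (2) and (3) to it. For the forward direction of (1) I would simply take $T=\widetilde{H}$: it is an overmonoid of $\widetilde{H}$, it satisfies $\mathcal{A}(H)\subseteq\mathcal{A}(\widetilde{H})=\mathcal{A}(T)$ by hypothesis, and $\widetilde{H}^{\times}\cap H=H^{\times}$ (this is the fact already used in the proof of Lemma~\ref{lemma1}(3); concretely, if $x\in\widetilde{H}^{\times}\cap H$ then $x^{-M}\in H$ for some $M\in\mathbb{N}$, so $x^{-1}=x^{M-1}x^{-M}\in H$ and $x\in H^{\times}$).

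For the reverse direction of (1) I would fix an overmonoid $T$ of $\widetilde{H}$ with $\mathcal{A}(H)\subseteq\mathcal{A}(T)$ and $T^{\times}\cap H=H^{\times}$, take $u\in\mathcal{A}(H)$, and check $u\in\mathcal{A}(\widetilde{H})$ by hand. First, $u\notin\widetilde{H}^{\times}$: since $\widetilde{H}\subseteq T$ gives $\widetilde{H}^{\times}\subseteq T^{\times}$, being a unit of $\widetilde{H}$ would force $u\in T^{\times}\cap H=H^{\times}$, contradicting that $u$ is an atom of $H$. Next, given any factorization $u=ab$ with $a,b\in\widetilde{H}$, the inclusion $\widetilde{H}\subseteq T$ and $u\in\mathcal{A}(H)\subseteq\mathcal{A}(T)$ yield (say) $a\in T^{\times}$. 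The crucial move is to upgrade this to $a\in\widetilde{H}^{\times}$: picking $N\in\mathbb{N}$ with $a^{N}\in H$, I note $a^{N}\in T^{\times}\cap H=H^{\times}$, so $a^{-N}\in H$, hence $(a^{-1})^{N}\in H$ and $a^{-1}\in\widetilde{H}$. Thus $a\in\widetilde{H}^{\times}$, proving $u\in\mathcal{A}(\widetilde{H})$.

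With (1) established, (2) and (3) are short. The forward implication of (2) again follows by taking $T=\widetilde{H}$. For the reverse implication, assuming $H$ atomic and given an overmonoid $T$ of $\widetilde{H}$ with $\mathcal{A}(H)\subseteq\mathcal{A}(T)$, I would invoke Proposition~\ref{proposition5}(4) to get $T^{\times}\cap H=H^{\times}$ and then apply the reverse direction of (1). For (3), Proposition~\ref{proposition1} furnishes a Krull overmonoid $T$ of $H$ (with the same quotient group $K$) such that $H\hookrightarrow T$ is a transfer homomorphism. Since $T$ is Krull it is root closed, so $\widetilde{H}\subseteq\widetilde{T}=T$ and $T$ is an overmonoid of $\widetilde{H}$; moreover $\mathcal{A}(H)\subseteq\mathcal{A}(T)$ by Proposition~\ref{proposition5}(2) and $T^{\times}\cap H=H^{\times}$ by Proposition~\ref{proposition1}. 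Part (1) then delivers $\mathcal{A}(H)\subseteq\mathcal{A}(\widetilde{H})$.

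The main obstacle is exactly the upgrade step in (1): a factor $a\in T^{\times}\cap\widetilde{H}$ need not be a unit of $\widetilde{H}$ in general, as one sees already with $H=\widetilde{H}=(\mathbb{N}_{0},+)$ and $T=(\mathbb{Z},+)$, where every element of $\widetilde{H}$ becomes a unit of $T$ while $\widetilde{H}^{\times}$ is trivial. It is therefore essential to combine $T^{\times}\cap H=H^{\times}$ with root-closedness through the relation $a^{N}\in H$; once this interplay is used, the rest is bookkeeping resting on Propositions~\ref{proposition1} and~\ref{proposition5} and Lemma~\ref{lemma1}.
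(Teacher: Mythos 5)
Your proof is correct and follows essentially the same route as the paper: the crucial upgrade $a\in T^{\times}\cap\widetilde{H}\Rightarrow a\in\widetilde{H}^{\times}$ via $a^{N}\in T^{\times}\cap H=H^{\times}$ is exactly the paper's observation that $T^{\times}\cap\widetilde{H}=\widetilde{H}^{\times}$, and parts (2) and (3) are reduced to (1) through Propositions~\ref{proposition5}(4), \ref{proposition5}(2) and~\ref{proposition1} just as in the paper.
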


\begin{proof} (1) First let $\mathcal{A}(H)\subseteq\mathcal{A}(\widetilde{H})$. Set $T=\widetilde{H}$. It is obvious that $T^{\times}\cap H=H^{\times}$.

Now let $T$ be an overmonoid of $\widetilde{H}$ such that $\mathcal{A}(H)\subseteq\mathcal{A}(T)$ and $T^{\times}\cap H=H^{\times}$. Observe that $T^{\times}\cap\widetilde{H}=\widetilde{H}^{\times}$. (If $x\in T^{\times}\cap\widetilde{H}$, then $x^k\in H$ for some $k\in\mathbb{N}$, and hence $x^k\in T^{\times}\cap H=H^{\times}\subseteq\widetilde{H}^{\times}$, so $x\in\widetilde{H}^{\times}$.)

Let $u\in\mathcal{A}(H)$. Since $u\not\in H^{\times}$ (and $\widetilde{H}^{\times}\cap H=H^{\times}$), we have that $u\not\in\widetilde{H}^{\times}$. Now let $a,b\in\widetilde{H}$ be such that $u=ab$. Then $a,b\in T$. Since $u\in\mathcal{A}(T)$, we infer that $a\in T^{\times}\cap\widetilde{H}=\widetilde{H}^{\times}$ or $b\in T^{\times}\cap\widetilde{H}=\widetilde{H}^{\times}$.

\medskip
(2) This is an immediate consequence of (1) and Proposition~\ref{proposition5}(4).

\medskip
(3) This follows from (1) and Propositions~\ref{proposition1} and~\ref{proposition5}.
\end{proof}

The problem whether an atom of a half-factorial domain is again an atom of certain overrings has already been studied (e.g. see \cite[Proposition 2.2]{Pi03}). Observe that Lemma~\ref{lemma2}(3) is a result of similar type for monoids. Next we present the first main result of this section. In Section 6 we provide examples of half-factorial monoids whose root closure is also half-factorial.

\begin{theorem}\label{theorem2} Let $H$ be a monoid.
\begin{enumerate}
\item[(1)] If $\widetilde{H}$ is half-factorial, then the following statements are equivalent.
\begin{enumerate}
\item[(a)] $H$ is transfer Krull.
\item[(b)] There is some overmonoid $T$ of $H$ such that $T$ is Krull and $\mathcal{A}(T)=\{u\varepsilon\mid u\in\mathcal{A}(H),\varepsilon\in T^{\times}\}$.
\item[(c)] There is some overmonoid $T$ of $\widetilde{H}$ such that $\mathcal{A}(H)\subseteq\mathcal{A}(T)$.
\item[(d)] $\mathcal{A}(H)\subseteq\mathcal{A}(\widetilde{H})$.
\item[(e)] $H$ is half-factorial.
\end{enumerate}
\item[(2)] If $\widetilde{H}$ is factorial, then $H$ is transfer Krull if and only if $\mathcal{A}(\widetilde{H})=\{u\varepsilon\mid u\in\mathcal{A}(H),\varepsilon\in\widetilde{H}^{\times}\}$.
\end{enumerate}
\end{theorem}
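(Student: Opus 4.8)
The plan is to establish (1) by proving the cycle (a) $\Rightarrow$ (b) $\Rightarrow$ (c) $\Rightarrow$ (d) $\Rightarrow$ (e) $\Rightarrow$ (a), and then to deduce (2) from (1). Throughout I would lean on two standing facts. First, $\widetilde{H}^{\times}\cap H=H^{\times}$ holds for every monoid (as already noted in the proof of Lemma~\ref{lemma1}(3)). Second, a half-factorial monoid carries a length homomorphism: if $\widetilde{H}$ is half-factorial I fix $\ell\colon\widetilde{H}\to(\mathbb{N}_0,+)$, $\ell(x)=\max\mathsf{L}_{\widetilde{H}}(x)$, which is a monoid homomorphism with $\ell(x)=0$ if and only if $x\in\widetilde{H}^{\times}$. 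A preliminary observation I would record first is that $H$ is atomic whenever $\widetilde{H}$ is half-factorial: for a non-unit $a\in H$ we have $a\notin\widetilde{H}^{\times}$, hence $\ell(a)\geq 1$, and an induction on $\ell(a)$ works, since if $a=bc$ with $b,c$ non-units of $H$ then $\ell(b),\ell(c)\geq 1$ and $\ell(b)+\ell(c)=\ell(a)$ force $\ell(b),\ell(c)<\ell(a)$. This atomicity of $H$ is the small observation that makes the rest of the cycle run smoothly.

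For the cycle itself, (a) $\Rightarrow$ (b) is immediate from Proposition~\ref{proposition1} (which yields a Krull overmonoid $T$ with $H\hookrightarrow T$ a transfer homomorphism) together with Proposition~\ref{proposition5}(2). For (b) $\Rightarrow$ (c) I would use that a Krull monoid is root closed, so $\widetilde{H}\subseteq\widetilde{T}=T$ exhibits $T$ as an overmonoid of $\widetilde{H}$, while $\mathcal{A}(H)\subseteq\mathcal{A}(T)$ follows from the given description of $\mathcal{A}(T)$. The implication (c) $\Rightarrow$ (d) is then exactly Lemma~\ref{lemma2}(2), applicable since $H$ is atomic. For (d) $\Rightarrow$ (e): given two factorizations $u_1\cdots u_k=a=v_1\cdots v_m$ into atoms of $H$, applying $\ell$ and using $\mathcal{A}(H)\subseteq\mathcal{A}(\widetilde{H})$ (so that $\ell(u_i)=\ell(v_j)=1$) gives $k=\ell(a)=m$; hence the already atomic $H$ is half-factorial. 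Finally (e) $\Rightarrow$ (a) is Remark~\ref{remark1}(1).

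For part (2), since factorial monoids are half-factorial, part (1) reduces the claim to showing that $\mathcal{A}(H)\subseteq\mathcal{A}(\widetilde{H})$ is equivalent to $\mathcal{A}(\widetilde{H})=\{u\varepsilon\mid u\in\mathcal{A}(H),\varepsilon\in\widetilde{H}^{\times}\}$. The reverse implication and the inclusion ``$\supseteq$'' of the forward implication are formal, since atoms of $H$ are atoms of $\widetilde{H}$ and remain atoms after multiplication by a unit. The only substantive point, and the step I expect to be the \emph{main obstacle}, is the inclusion ``$\subseteq$'', for which by Proposition~\ref{proposition5}(1) it suffices to prove $\widetilde{H}=H\widetilde{H}^{\times}$. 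Here I would exploit factoriality directly: given a non-unit $x\in\widetilde{H}$, pick $N\in\mathbb{N}$ with $x^N\in H$ (a non-unit of $H$) and factor $x^N$ into atoms of $H$, each of which is an atom of $\widetilde{H}$, i.e. a prime of $\widetilde{H}$ up to a unit. Comparing with the prime factorization $x^N=\eta^N\prod_p p^{Ne_p}$ shows that every prime $p$ occurring in $x$ is associated in $\widetilde{H}$ to some atom $w_p\in\mathcal{A}(H)$, say $w_p=\delta_p p$ with $\delta_p\in\widetilde{H}^{\times}$. Then $y=\prod_p w_p^{e_p}\in H$ satisfies $x=y\,\eta\prod_p\delta_p^{-e_p}$ with $\eta\prod_p\delta_p^{-e_p}\in\widetilde{H}^{\times}$, so $x\in H\widetilde{H}^{\times}$, which is what is needed. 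The delicate part is the multiplicity bookkeeping in the comparison of prime factorizations, which is exactly where unique factorization in $\widetilde{H}$ (rather than mere half-factoriality) is used and why part (2) genuinely requires the factorial hypothesis.
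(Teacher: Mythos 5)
Your proof is correct, and it relates to the paper's in two different ways. For part (1) you take essentially the paper's route: the identical cycle (a) $\Rightarrow$ (b) $\Rightarrow$ (c) $\Rightarrow$ (d) $\Rightarrow$ (e) $\Rightarrow$ (a) with the same key ingredients (Propositions~\ref{proposition1} and~\ref{proposition5}(2) for (a) $\Rightarrow$ (b), root closedness of Krull monoids for (b) $\Rightarrow$ (c), Lemma~\ref{lemma2}(2) for (c) $\Rightarrow$ (d)); your only deviations are cosmetic, namely that you prove atomicity of $H$ by an elementary induction on the length function of $\widetilde{H}$ where the paper cites \cite[Corollary 1.3.3]{Ge-HK06}, and that you phrase (d) $\Rightarrow$ (e) via the length homomorphism where the paper compares the two factorization lengths directly. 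Part (2) is where you genuinely diverge, and your reduction correctly identifies the hard inclusion $\mathcal{A}(\widetilde{H})\subseteq\{u\varepsilon\mid u\in\mathcal{A}(H),\varepsilon\in\widetilde{H}^{\times}\}$ as the only substantive point. The paper proves it ideal-theoretically: for $v\in\mathcal{A}(\widetilde{H})$ it sets $Q=v\widetilde{H}\in\mathfrak{X}(\widetilde{H})$, contracts to $P=Q\cap H\in\mathfrak{X}(H)$ by citing \cite[Proposition 5(b)]{Ch-HK-K02}, and uses atomicity of $H$ to produce $u\in\mathcal{A}(H)\cap P$, which is then forced to be associated to $v$ in $\widetilde{H}$. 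You instead prove the element-wise statement $\widetilde{H}=H\widetilde{H}^{\times}$ by factoring $x^N\in H$ into atoms of $H$ (which are primes of $\widetilde{H}$ by (d)), matching prime factorizations, and assembling a preimage $y\in H$ of $x$ up to a unit of $\widetilde{H}$; Proposition~\ref{proposition5}(1) then finishes. Your bookkeeping is sound: each prime $p$ dividing $x$ divides $x^N=u_1\cdots u_k$, hence divides some $u_i$, hence is associated to it. What your route buys is self-containedness (no appeal to the prime-ideal correspondence of Chapman--Halter-Koch--Krause) and the explicit structural fact $\widetilde{H}=H\widetilde{H}^{\times}$, which is of independent interest in this paper (it is exactly the hypothesis appearing in Remark~\ref{remark2}(2)); what the paper's route buys is brevity and an argument whose contraction step $Q\cap H\in\mathfrak{X}(H)$ is a general feature of root extensions rather than a computation special to the factorial situation.
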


\begin{proof} (1) Let $\widetilde{H}$ be half-factorial. Since $\widetilde{H}^{\times}\cap H=H^{\times}$, we have by \cite[Corollary 1.3.3]{Ge-HK06} that $H$ is atomic.

(a) $\Rightarrow$ (b): This follows from Propositions~\ref{proposition1} and~\ref{proposition5}(2).

(b) $\Rightarrow$ (c): Since $T$ is root closed, we have that $\widetilde{H}\subseteq\widetilde{T}=T$, and hence $T$ is an overmonoid of $\widetilde{H}$.

(c) $\Rightarrow$ (d): This is an immediate consequence of Lemma~\ref{lemma2}(2).

(d) $\Rightarrow$ (e): Let $a\in H$ and let $k,\ell\in\mathsf{L}(a)$. Then $a$ is the product of $k$ atoms of $H$ and $a$ is the product of $\ell$ atoms of $H$. Consequently, $a$ is the product of $k$ atoms of $\widetilde{H}$ and $a$ is the product of $\ell$ atoms of $\widetilde{H}$. Since $\widetilde{H}$ is half-factorial, we have that $k=\ell$.

(e) $\Rightarrow$ (a): This is clear.

\medskip
(2) Let $\widetilde{H}$ be factorial. By (1) it remains to show that if $H$ is transfer Krull, then $\mathcal{A}(\widetilde{H})\subseteq\{u\varepsilon\mid u\in\mathcal{A}(H),\varepsilon\in\widetilde{H}^{\times}\}$. Let $H$ be transfer Krull and let $v\in\mathcal{A}(\widetilde{H})$. Set $Q=v\widetilde{H}$ and $P=Q\cap H$. Then $Q\in\mathfrak{X}(\widetilde{H})$ (since $\widetilde{H}$ is factorial) and $P\in\mathfrak{X}(H)$ by \cite[Proposition 5(b)]{Ch-HK-K02}. Since $H$ is atomic, there is some $u\in\mathcal{A}(H)\cap P$. We infer that $u\in\mathcal{A}(\widetilde{H})\cap Q$. This implies that $u\widetilde{H}=Q=v\widetilde{H}$, and thus $v=u\varepsilon$ for some $\varepsilon\in\widetilde{H}^{\times}$.
\end{proof}

Let $H$ be a monoid. A non-unit $a$ of $H$ is called {\it primary} if for all $b,c\in H$ with $a\mid_H bc$ and $a\nmid_H b$, there is some $n\in\mathbb{N}$ such that $a\mid_H c^n$. Moreover, $H$ is said to be {\it weakly factorial} if every non-unit of $H$ is a finite product of primary elements of $H$. In what follows, we freely use coproducts. Their precise definition can be found in \cite{Ge-HK06}.

\begin{remark}\label{remark3} Let $H$ be a weakly factorial monoid. Then $H$ is half-factorial if and only if $H_P$ is half-factorial for each $P\in\mathfrak{X}(H)$.
\end{remark}

\begin{proof} By \cite[Theorem 22.5(ii)]{HK98}, we have that the $t$-dimension of $H$ is at most one (i.e., $t$-$\max(H)\subseteq\mathfrak{X}(H)$). It is easy to see that $\bigcap_{P\in\mathfrak{X}(H)} xH_P=xH$ for each $x\in H$ (cf. \cite[Theorem 7.4]{HK98}). Also note that $xH_P\cap H$ is a principal ideal of $H$ by \cite[Exercise 5(i), page 258]{HK98} for all $x\in H$ and $P\in\mathfrak{X}(H)$. It is now straightforward to show that $\varphi:\mathcal{H}(H)\rightarrow\coprod_{P\in\mathfrak{X}(H)}\mathcal{H}(H_P)$ defined by $\varphi(xH)=(xH_P)_{P\in\mathfrak{X}(H)}$ for each $x\in H$ is a monoid isomorphism. It follows from \cite[Proposition 1.2.11.2]{Ge-HK06} that $H$ is half-factorial if and only if $H_{\rm red}\cong\mathcal{H}(H)$ is half-factorial if and only if $\coprod_{P\in\mathfrak{X}(H)} (H_P)_{\rm red}\cong\coprod_{P\in\mathfrak{X}(H)}\mathcal{H}(H_P)$ is half-factorial if and only if $(H_P)_{\rm red}$ is half-factorial for each $P\in\mathfrak{X}(H)$ if and only if $H_P$ is half-factorial for each $P\in\mathfrak{X}(H)$.
\end{proof}

Now we provide the second main theorem of this section. In Section 5 we introduce and discuss the concept of generalized Cohen-Kaplansky domain. The monoids of nonzero elements of these domains are among the most important examples of weakly factorial monoids whose root closure is a Krull monoid. In Section 6 it will become clear that even if one modestly weakens the weakly factorial property in the next result that the provided conditions no longer characterize the transfer Krull property.

\begin{theorem}\label{theorem3} Let $H$ be a weakly factorial monoid such that $\widetilde{H}$ is a Krull monoid. The following statements are equivalent.
\begin{enumerate}
\item[(1)] $H$ is transfer Krull.
\item[(2)] For each $P\in\mathfrak{X}(H)$, $H_P$ is transfer Krull.
\item[(3)] For each $P\in\mathfrak{X}(H)$, $H_P$ is half-factorial.
\item[(4)] $H$ is half-factorial.
\item[(5)] $\mathcal{A}(\widetilde{H})=\{u\varepsilon\mid u\in\mathcal{A}(H),\varepsilon\in\widetilde{H}^{\times}\}$.
\item[(6)] $\mathcal{A}(H)\subseteq\mathcal{A}(\widetilde{H})$.
\end{enumerate}
In addition, if $H$ is seminormal, then these equivalent conditions are satisfied.
\end{theorem}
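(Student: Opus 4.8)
The plan is to reduce the whole statement to the case of a \emph{factorial} root closure, where Theorem~\ref{theorem2} applies, together with a local analysis at the minimal primes $P\in\mathfrak{X}(H)$. First I would record two preliminary facts. Since $\widetilde{H}$ is Krull it is atomic, and $\widetilde{H}^{\times}\cap H=H^{\times}$, so $H$ is atomic (as in the proof of Theorem~\ref{theorem2}, via \cite[Corollary 1.3.3]{Ge-HK06}). For each $P\in\mathfrak{X}(H)$, weak factoriality makes $H_P$ primary; the root closure of a primary monoid is again primary, and $\widetilde{H_P}=(\widetilde{H})_P$ is a localization of the Krull monoid $\widetilde{H}$, hence Krull. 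A primary Krull monoid is a DVM, so $\widetilde{H_P}$ is a DVM for every $P$.

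The heart of the argument is the claim that \emph{$\widetilde{H}$ is factorial}. To prove it I would argue as follows. Every atom of $H$ is primary (a product decomposition into primaries collapses to one factor), say $P$-primary with $P=\sqrt{uH}\in\mathfrak{X}(H)$. Because each $\widetilde{H_P}$ is a DVM, there is a unique $Q_P\in\mathfrak{X}(\widetilde{H})$ with $Q_P\cap H=P$, and the contraction $Q\mapsto Q\cap H$ gives a bijection $\mathfrak{X}(\widetilde{H})\to\mathfrak{X}(H)$ (the extension $H\subseteq\widetilde{H}$ is integral). Using primarity one checks that such a $u$ is supported only at $Q_P$ in the divisor theory of $\widetilde{H}$. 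Since $H$ is atomic, its quotient group $K$ is generated by $\mathcal{A}(H)\cup H^{\times}$, and units have trivial divisor; as $K$ is also the quotient group of $\widetilde{H}$, the group of principal divisors of $\widetilde{H}$ is therefore generated by single-support divisors, hence is ``rectangular''. Consequently an atom of $\widetilde{H}$ supported at two distinct primes would factor as a product of two non-units, a contradiction; so every atom of $\widetilde{H}$ is single-support with valuation equal to the order of its class, forcing $\widetilde{H}$ to be free, i.e. factorial. Granting this, the equivalences fall out of earlier results: Theorem~\ref{theorem2}(2) gives $(1)\Leftrightarrow(5)$; since factorial implies half-factorial, Theorem~\ref{theorem2}(1) applies and its conditions (a), (d), (e) are exactly $(1)$, $(6)$, $(4)$, so $(1),(4),(5),(6)$ are equivalent. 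Remark~\ref{remark3} yields $(3)\Leftrightarrow(4)$, and for $(2)\Leftrightarrow(3)$ I would apply Corollary~\ref{corollary2} to each $H_P$ (whose root closure is the DVM $\widetilde{H_P}$), so that $H_P$ is transfer Krull iff $H_P$ is half-factorial, and take the conjunction over all $P\in\mathfrak{X}(H)$.

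For the seminormal addendum, by Remark~\ref{remark3} it suffices to show each $H_P$ is half-factorial. Here $H_P$ is seminormal (seminormalization commutes with localization), $\widetilde{H_P}$ is a DVM, and $H_P$ and $\widetilde{H_P}$ share the quotient group. Since $H_P$ generates this group, it contains every element of sufficiently large value; hence for any $w\in\widetilde{H_P}$ of positive value one has $w^n\in H_P$ for all large $n$, and seminormality forces $w\in H_P$. Thus every non-unit of $\widetilde{H_P}$ of positive value lies in $H_P$, so the atoms of $H_P$ are precisely its value-one elements and $H_P$ is half-factorial. The main obstacle is the factoriality claim: the delicate points are controlling the contraction of the height-one primes of $\widetilde{H}$ into $\mathfrak{X}(H)$ and showing that weak factoriality forces the principal divisor group of $\widetilde{H}$ to be single-support generated (equivalently, that no atom of the Krull monoid $\widetilde{H}$ can have two distinct prime supports). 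Once that is in hand, the remainder is a direct application of Theorem~\ref{theorem2}, Remark~\ref{remark3} and Corollary~\ref{corollary2}.
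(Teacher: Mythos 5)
Your treatment of the equivalences (1)--(6) is correct, but it reaches the key fact that $\widetilde{H}$ is factorial by a genuinely different route from the paper. The paper gets this in one line from citations: weak factoriality gives trivial $t$-class group of $H$ by \cite[Exercise 5, page 258]{HK98}, the triviality passes to $\widetilde{H}$ by \cite[Proposition 8]{Ch-HK-K02}, and a Krull monoid with trivial $t$-class group is factorial. You instead prove it directly: atoms of $H$ are primary, primary elements are single-support with respect to the divisor theory $\partial$ of $\widetilde{H}$, and since the atomic monoid $H$ generates the common quotient group $K$, the group of principal divisors $\partial(K)$ is generated by single-support vectors, so $\partial(K)=\bigoplus_{Q}d_Q\mathbb{Z}$ for suitable $d_Q\in\mathbb{N}$ and $\widetilde{H}_{\mathrm{red}}\cong\partial(K)\cap\bigoplus_{Q}\mathbb{N}_0\,Q=\bigoplus_{Q}d_Q\mathbb{N}_0$ is free. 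This is sound: the supporting claims (the spectrum bijection for root extensions, and the fact that $\sqrt{uH}\in\mathfrak{X}(H)$ for a primary atom $u$, which needs that weakly factorial implies weakly Krull) all check out, and the argument is a self-contained replacement for \cite[Proposition 8]{Ch-HK-K02}. Likewise your route to ``$\widetilde{H_P}$ is a DVM'' (localizations of Krull monoids are Krull, the root closure of a primary monoid is primary, and primary Krull monoids are DVMs) is a fine alternative to the paper's identification $\widetilde{H_P}=\widetilde{H}_Q$. The remaining reductions via Theorem~\ref{theorem2}, Remark~\ref{remark3} and Corollary~\ref{corollary2} are exactly the paper's.

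The seminormal addendum, however, has a genuine gap at the step ``Since $H_P$ generates this group, it contains every element of sufficiently large value.'' Generating the quotient group only yields that the value semigroup $v(H_P)\subseteq\mathbb{N}_0$ generates $\mathbb{Z}$, hence contains all sufficiently large integers; it does not yield that every element of $\widetilde{H_P}$ of large value lies in $H_P$. Two elements of equal value differ by a unit of the DVM $\widetilde{H_P}$, and such a unit need not lie in $H_P$ (only some power of it does). Concretely (written additively), let $V=\mathbb{N}_0\times G$ with $G=\bigoplus_{i\geq 1}\mathbb{Z}/2\mathbb{Z}$ with standard generators $e_i$, and let $H=\{(0,0)\}\cup\{(n,g)\mid n\geq 1,\ g\in\langle\{e_1,\dots,e_n\}\rangle\}$. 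This $H$ is primary, its root closure is the DVM $V$, and its quotient group is all of $\mathbb{Z}\times G$, yet $(n,e_{n+1})\notin H$ for every $n$, so no bound on the value forces membership in $H$. Of course this $H$ is not seminormal---but your argument invokes seminormality only \emph{after} the contested step (to pass from ``$w^n\in H_P$ for all large $n$'' to ``$w\in H_P$''), so it cannot repair it. What you are really asserting is $H_P\setminus H_P^{\times}=\widetilde{H_P}\setminus\widetilde{H_P}^{\times}$ for a seminormal primary monoid whose root closure is a DVM; that is precisely \cite[Lemma 3.3]{Ge-Ka-Re15}, the nontrivial input the paper cites at exactly this point, and your sketch does not reprove it.
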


\begin{proof} It follows from \cite[Exercise 5, page 258]{HK98} that the $t$-class group of $H$ is trivial, and hence the $t$-class group of $\widetilde{H}$ is trivial by \cite[Proposition 8]{Ch-HK-K02}. Therefore, $\widetilde{H}$ is factorial by \cite[Corollary 2.3.13]{Ge-HK06}. Next we show that $\widetilde{H_P}$ is a DVM for each $P\in\mathfrak{X}(H)$. Let $P\in\mathfrak{X}(H)$. There is some $Q\in\mathfrak{X}(\widetilde{H})$ such that $Q\cap H=P$ by \cite[Proposition 5(b)]{Ch-HK-K02}. Clearly, $\widetilde{H}_Q$ is a DVM by \cite[Theorem 2.3.11]{Ge-HK06}. We show that $\widetilde{H_P}=\widetilde{H}_Q$. Since $Q\cap H=P$, we have that $\widetilde{H_P}=(H\setminus P)^{-1}\widetilde{H}\subseteq (\widetilde{H}\setminus Q)^{-1}\widetilde{H}=\widetilde{H}_Q$. It remains to show that $(\widetilde{H}\setminus Q)^{-1}\subseteq\widetilde{H_P}$. Let $x\in\widetilde{H}\setminus Q$. There is some $k\in\mathbb{N}$ such that $x^k\in H$. Observe that $x^k\not\in P$. (If $x^k\in P$, then $x^k\in Q$, and thus $x\in Q$.) Therefore, $x^{-k}\in (H\setminus P)^{-1}\subseteq H_P$, and hence $x^{-1}\in\widetilde{H_P}$.

\medskip
(1) $\Leftrightarrow$ (4) $\Leftrightarrow$ (5) $\Leftrightarrow$ (6): This is an immediate consequence of Theorem~\ref{theorem2}.

\medskip
(2) $\Leftrightarrow$ (3): Let $P\in\mathfrak{X}(H)$. Since $\widetilde{H_P}$ is a DVM, we infer by Corollary~\ref{corollary2} (or by Theorem~\ref{theorem2}(1)) that $H_P$ is transfer Krull if and only if $H_P$ is half-factorial.

\medskip
(3) $\Leftrightarrow$ (4): This follows from Remark~\ref{remark3}.

\medskip
Now let $H$ be seminormal and let $P\in\mathfrak{X}(H)$. Then $H_P$ is seminormal and $\widetilde{H_P}$ is a DVM. By Theorem~\ref{theorem2}(1), it suffices to show that $\mathcal{A}(H_P)\subseteq\mathcal{A}(\widetilde{H_P})$. Since $H_P$ is primary and seminormal and $\widetilde{H_P}$ is a DVM, it follows from \cite[Lemma 3.3]{Ge-Ka-Re15} that $H_P\setminus H_P^{\times}=\widetilde{H_P}\setminus\widetilde{H_P}^{\times}$. Let $u\in\mathcal{A}(H_P)$. Clearly, $u\not\in\widetilde{H_P}^{\times}$. Let $x,y\in\widetilde{H_P}$ be such that $u=xy$. If $x,y\not\in\widetilde{H_P}^{\times}$, then $x,y\in H_P\setminus H_P^{\times}$, a contradiction. Consequently, $x\in\widetilde{H_P}^{\times}$ or $y\in\widetilde{H_P}^{\times}$.
\end{proof}

\begin{corollary}\label{corollary4} Let $H$ be a transfer Krull monoid such that $\widetilde{H}$ is Krull. If $H$ is weakly factorial or $H\subseteq\widetilde{H}$ is inert, then $H_P$ is transfer Krull for each $P\in\mathfrak{X}(H)$.
\end{corollary}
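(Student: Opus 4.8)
The plan is to treat the two hypotheses separately, since they are governed by different mechanisms.

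Suppose first that $H$ is weakly factorial. Then $H$ satisfies exactly the standing hypotheses of Theorem~\ref{theorem3} (weakly factorial with $\widetilde{H}$ Krull), and the equivalence of its conditions (1) and (2) states precisely that $H$ is transfer Krull if and only if $H_P$ is transfer Krull for every $P\in\mathfrak{X}(H)$. Since $H$ is transfer Krull by assumption, condition (2) holds and the claim follows at once in this case.

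The substantive case is $H\subseteq\widetilde{H}$ inert. Fix $P\in\mathfrak{X}(H)$ and put $S=H\setminus P$, so $H_P=S^{-1}H$. First I would identify the root closure of the localization by proving $\widetilde{H_P}=S^{-1}\widetilde{H}$. The inclusion $S^{-1}\widetilde{H}\subseteq\widetilde{H_P}$ is immediate, since every $a\in\widetilde{H}$ has a power in $H\subseteq H_P$ and $S^{-1}\subseteq H_P^{\times}$; conversely, if $x\in\widetilde{H_P}$ then $x^n=s^{-1}h$ for some $n\in\mathbb{N}$, $s\in S$, $h\in H$, whence $(sx)^n=s^{n-1}h\in H$, so $sx\in\widetilde{H}$ and $x\in S^{-1}\widetilde{H}$. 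Because $\widetilde{H}$ is Krull and quotient overmonoids of Krull monoids are again Krull, $\widetilde{H_P}=S^{-1}\widetilde{H}$ is a Krull monoid. It then remains to verify that $H_P\subseteq\widetilde{H_P}$ is inert, for then $H_P\hookrightarrow\widetilde{H_P}$ is a transfer homomorphism by Lemma~\ref{lemma1}(3), and $H_P$ is transfer Krull because its target $\widetilde{H_P}$ is Krull.

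This last point is the heart of the argument and amounts to showing that inertness descends to localizations. Given $x,y\in\widetilde{H_P}$ with $xy\in H_P$, I would clear denominators: writing $x=s_1^{-1}a$ and $y=s_2^{-1}b$ with $s_1,s_2\in S$ and $a,b\in\widetilde{H}$, the relation $xy=(s_1s_2)^{-1}ab\in S^{-1}H$ yields, after cross-multiplying, $s\,ab\in H$ for some $s\in S$. Now $sa,b\in\widetilde{H}$ and $(sa)b\in H$, so inertness of $H\subseteq\widetilde{H}$ produces $\varepsilon\in\widetilde{H}^{\times}\subseteq\widetilde{H_P}^{\times}$ with $sa\varepsilon\in H$ and $b\varepsilon^{-1}\in H$; consequently $x\varepsilon=(s_1s)^{-1}(sa\varepsilon)\in H_P$ and $y\varepsilon^{-1}=s_2^{-1}(b\varepsilon^{-1})\in H_P$, which is what inertness requires. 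The main obstacle I anticipate is exactly this descent step: one must carefully track units across the localization (confirming $\widetilde{H}^{\times}\subseteq\widetilde{H_P}^{\times}$) and ensure the denominator-clearing delivers a single $\varepsilon$ serving $x$ and $y$ simultaneously. I note that this localization argument never uses $P\in\mathfrak{X}(H)$, so inertness is in fact preserved under arbitrary localization $S^{-1}H$ — a mild bonus beyond what the corollary requires.
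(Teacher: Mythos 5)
Your proposal is correct, and its two-case skeleton (weakly factorial via Theorem~\ref{theorem3}; inert via descent of inertness to the localization) matches the paper's proof, but the concluding mechanism in the inert case is genuinely different. The paper disposes of the two steps you prove in detail with a remark that they are ``straightforward,'' and then, crucially, it identifies $\widetilde{H_P}$ as a \emph{DVM}: it lifts $P$ to a height-one prime $Q\in\mathfrak{X}(\widetilde{H})$ with $Q\cap H=P$ (as in the proof of Theorem~\ref{theorem3}, via \cite[Proposition 5(b)]{Ch-HK-K02}), notes $\widetilde{H_P}=\widetilde{H}_Q$ is a DVM, and invokes Corollary~\ref{corollary2}. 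You instead need only that $\widetilde{H_P}=S^{-1}\widetilde{H}$ is \emph{Krull}, quoting the general fact that quotient monoids of Krull monoids are Krull, and then conclude directly from inertness via Lemma~\ref{lemma1}(3). Each route has its price and its payoff: yours leans on a standard fact (localizations of Krull monoids are Krull) that the paper never states, but in exchange it avoids the prime-lifting/DVM machinery entirely and — as you correctly observe — never uses $P\in\mathfrak{X}(H)$, so it actually proves the stronger statement that when $H\subseteq\widetilde{H}$ is inert and $\widetilde{H}$ is Krull, \emph{every} localization $S^{-1}H$ is transfer Krull. The paper's route is tied to height-one primes but yields more structural information for free: since $\widetilde{H_P}$ is a DVM, Corollary~\ref{corollary2} shows $H_P$ is not merely transfer Krull but half-factorial. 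Your denominator-clearing argument for the descent of inertness (producing a single $\varepsilon\in\widetilde{H}^{\times}\subseteq\widetilde{H_P}^{\times}$ serving both $x$ and $y$) and your verification of $\widetilde{H_P}=S^{-1}\widetilde{H}$ are both correct and fill in exactly the details the paper omits.
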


\begin{proof} Let $P\in\mathfrak{X}(H)$. If $H$ is weakly factorial, then it follows from Theorem~\ref{theorem3} that $H_P$ is transfer Krull. Now let $H\subseteq\widetilde{H}$ be inert. It is straightforward to show that $H_P\subseteq\widetilde{H}_P=\widetilde{H_P}$ is inert. Since $\widetilde{H}$ is Krull, it follows that $\widetilde{H_P}$ is a DVM. (This can be proved along the same lines as in the proof of Theorem~\ref{theorem3}.) We infer by Corollary~\ref{corollary2} that $H_P$ is transfer Krull.
\end{proof}

\section{(Generalized) Cohen-Kaplansky domains}

In this section we first gather some main properties of (generalized) Cohen-Kaplansky domains. By a {\it domain}, we mean a commutative integral domain with identity element. Let $R$ be a domain with quotient field $K$. We denote by $R^{\bullet}=R\setminus\{0\}$ the {\it multiplicative monoid} of nonzero elements, by $R^{\times}$ the {\it group of units} of $R$ and by $\overline{R}$ the {\it integral closure} of $R$. If A is a monoid theoretic property (e.g. factorial, half-factorial), then we say that $R$ satisfies A if $R^{\bullet}$ satisfies A. We say that $R$ is a {\it Cohen-Kaplansky} domain if one of the following equivalent statements holds (\cite[Theorem 4.3]{An-Mo92}).

\begin{itemize}
\item[(a)] $R$ is atomic and has only finitely many atoms up to associates.
\item[(b)] $\overline{R}$ is a semilocal principal ideal domain, $\overline R/(R:\overline{R})$ is finite, and $|\max(R)|=|\max(\overline{R})|$.
\item[(c)] $R$ is an at most one-dimensional semilocal noetherian domain with $R/M$ finite for each nonprincipal maximal ideal $M$ of $R$, $\overline{R}$ is a finitely generated $R$-module (equivalently $(R:\overline{R})\neq\{0\}$) and $|\max(R)|=|\max(\overline R)|$.
\item[(d)] $R$ is noetherian, $K^{\times}/R^{\times}$ (the group of divisibility) is finitely generated and for each $x\in\overline{R}$, there exists an $n\in\mathbb{N}$ such that $x^n\in R$ (that is $R\subseteq\overline{R}$ is a root extension).
\end{itemize}

For a local Cohen-Kaplansky domain $R$, the multiplicative monoid $R^{\bullet}$ is a finitely primary monoid of rank one. Another interesting fact is that if a domain $R$ is a Cohen-Kaplansky domain, then so are all the localizations and conversely if a domain $R$ is semilocal such that it is locally a Cohen-Kaplansky domain, then $R$ is also a Cohen-Kaplansky domain. Thus the study of Cohen-Kaplansky domains may be reduced to the local case for various purposes. Also note that $R$ is a Cohen-Kaplansky domain if and only if $R^{\bullet}$ is $s$-noetherian (see \cite[Page 137]{Ge-HK06}). We refer the reader to \cite{An-Bo21, An-Mo92} for further details about Cohen-Kaplansky domains.

Furthermore, we say that $R$ is a {\it generalized Cohen-Kaplansky} domain if one of the following equivalent statements holds (\cite[Corollary 5 and Theorem 6]{An-An-Za92}).

\begin{itemize}
\item[(a)] $R$ is atomic and has only finitely many atoms (up to associates) that are not prime elements.
\item[(b)] $\overline{R}$ is factorial, $R\subseteq\overline{R}$ is a root extension, $(R:\overline{R})$ is a principal ideal of $\overline{R}$ and $\overline{R}/(R:\overline{R})$ is finite.
\end{itemize}
If these equivalent conditions are satisfied, then $R$ is weakly factorial. In particular, if $R$ is a generalized Cohen-Kaplansky domain, then $R^{\bullet}$ is weakly factorial and $\widetilde{R^{\bullet}}$ is factorial. For non-trivial examples of generalized Cohen-Kaplansky domains we refer to \cite[Page 137]{Ge-HK06}.

\begin{proposition}\label{proposition7} Let $R$ be local Cohen-Kaplansky domain with maximal ideal $M$. Then the following conditions are equivalent.
\begin{enumerate}
\item $R$ is half-factorial.
\item $R$ is transfer Krull.
\item $R^{\bullet}\hookrightarrow\overline{R}^{\bullet}$ is a transfer homomorphism.
\end{enumerate}
If these equivalent conditions hold, then $M\overline{R}=\overline{R}\setminus\overline{R}^{\times}$.
\end{proposition}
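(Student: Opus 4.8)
The plan is to set $H=R^{\bullet}$ and to reduce everything to Corollary~\ref{corollary2} via the identification $\widetilde{R^{\bullet}}=\overline{R}^{\bullet}$. Since $R$ is a local Cohen-Kaplansky domain, $R^{\bullet}$ is an $s$-noetherian finitely primary monoid of rank one, so $\widetilde{R^{\bullet}}$ is a DVM. The first key step is to prove $\widetilde{R^{\bullet}}=\overline{R}^{\bullet}$. For the inclusion $\widetilde{R^{\bullet}}\subseteq\overline{R}^{\bullet}$, any $x\in\widetilde{R^{\bullet}}$ satisfies $x^N\in R^{\bullet}\subseteq R$ for some $N\in\mathbb{N}$, hence $x$ is integral over $R$ (it is a root of $T^N-x^N$) and nonzero, so $x\in\overline{R}^{\bullet}$. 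For the reverse inclusion I would use that $R\subseteq\overline{R}$ is a root extension (condition (d) in the definition of a Cohen-Kaplansky domain): every $x\in\overline{R}^{\bullet}$ satisfies $x^n\in R$ for some $n$, and $x\neq 0$ forces $x^n\in R^{\bullet}$, whence $x\in\widetilde{R^{\bullet}}$. In particular $\overline{R}^{\bullet}$ is a DVM, so $\overline{R}$ is a DVR; let $\mathfrak{m}=\overline{R}\setminus\overline{R}^{\times}$ be its maximal ideal and fix a uniformizer $\pi$.

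Next I would feed this into the earlier machinery. Applying Corollary~\ref{corollary2} to $H=R^{\bullet}$ (whose root closure is a DVM) gives that $R^{\bullet}$ is half-factorial if and only if $R^{\bullet}$ is transfer Krull if and only if $R^{\bullet}\subseteq\widetilde{R^{\bullet}}$ is inert; these are precisely statements (1) and (2) together with the inertness of $R^{\bullet}\subseteq\widetilde{R^{\bullet}}$. By Lemma~\ref{lemma1}(3), the inertness of $R^{\bullet}\subseteq\widetilde{R^{\bullet}}$ is equivalent to $R^{\bullet}\hookrightarrow\widetilde{R^{\bullet}}=\overline{R}^{\bullet}$ being a transfer homomorphism, which is statement (3). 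This establishes the equivalence of (1), (2) and (3).

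For the final assertion, assume the equivalent conditions hold; in particular (3) holds, so by (T1) we have $\overline{R}^{\bullet}=R^{\bullet}\overline{R}^{\times}$. I would prove $M\overline{R}=\mathfrak{m}$ by two inclusions. Since $(\widetilde{R^{\bullet}})^{\times}\cap R^{\bullet}=R^{\times}$ holds automatically for the root closure and $\overline{R}^{\bullet}=\widetilde{R^{\bullet}}$, no element of $M$ can be a unit of $\overline{R}$; thus $M\subseteq\mathfrak{m}$ and $M\overline{R}\subseteq\mathfrak{m}$. Conversely, writing $\pi=r\varepsilon$ with $r\in R^{\bullet}$ and $\varepsilon\in\overline{R}^{\times}$, the element $r=\pi\varepsilon^{-1}$ lies in $\mathfrak{m}\cap R$ and is therefore a non-unit of $R$, i.e. $r\in M$; hence $\pi=r\varepsilon\in M\overline{R}$ and $\mathfrak{m}=\pi\overline{R}\subseteq M\overline{R}$. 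Combining the inclusions yields $M\overline{R}=\mathfrak{m}=\overline{R}\setminus\overline{R}^{\times}$.

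The step I expect to be the main obstacle is the identification $\widetilde{R^{\bullet}}=\overline{R}^{\bullet}$, since it is the bridge between the ring-theoretic integral closure and the purely monoid-theoretic root closure and relies on the root-extension property peculiar to Cohen-Kaplansky domains; once it is in place, Corollary~\ref{corollary2} and Lemma~\ref{lemma1}(3) do the work. A secondary (but routine) point of care is the final claim, where the multiplicative identity $\overline{R}^{\bullet}=R^{\bullet}\overline{R}^{\times}$ must be upgraded to the ideal-theoretic equality $M\overline{R}=\overline{R}\setminus\overline{R}^{\times}$; this is immediate only because $\overline{R}$ is a DVR, so that its maximal ideal is principal and is generated by any associate of a uniformizer arising from $R^{\bullet}$.
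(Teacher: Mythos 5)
Your proposal is correct and follows essentially the same route as the paper: the equivalence is exactly the paper's appeal to Corollary~\ref{corollary2} (with Lemma~\ref{lemma1}(3) and the identification $\widetilde{R^{\bullet}}=\overline{R}^{\bullet}$, which the paper leaves implicit but you rightly verify via the root-extension property), and the final claim rests in both cases on $\overline{R}=R\overline{R}^{\times}$. The only cosmetic difference is in that last step: the paper factors an arbitrary non-unit $t=mu$ with $m\in M$, $u\in\overline{R}^{\times}$ to get $\overline{R}\setminus\overline{R}^{\times}\subseteq M\overline{R}$ directly, whereas you factor a uniformizer and invoke principality of the maximal ideal of the DVR $\overline{R}$ --- both are fine.
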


\begin{proof} The equivalence is an immediate consequence of Corollary~\ref{corollary2}.

Now assume that the equivalent conditions hold. Then clearly, $\overline{R}=R\overline{R}^{\times}$. Let $t\in\overline{R}\setminus\overline{R}^{\times}$. Since $\overline{R}=R\overline{R}^{\times}$, there are $m\in R\setminus R^{\times}=M$ and $u\in\overline{R}^{\times}$ such that $t=mu\in M\overline{R}$. Therefore, $\overline{R}\setminus\overline{R}^{\times}\subseteq M\overline{R}$, and hence $M\overline{R}=\overline{R}\setminus\overline{R}^{\times}$.		
\end{proof}

Observe that the condition $M\overline{R}=\overline{R}\setminus\overline{R}^{\times}$ is not sufficient for $R$ to be half-factorial (\cite[Example 6.5]{An-Mo92}). Also note that the domain $R$ in \cite[Example 6.5]{An-Mo92} satisfies the property $\overline{R}=R\overline{R}^{\times}$ as well.

Now our aim is to provide a more general version of the above characterization. But for this we first recall that an integral domain $R$ is said to be a {\it QR-domain} if every overring of $R$ is a quotient overring (i.e., for each overring $T$ of $R$ there is some submonoid $S\subseteq R^{\bullet}$ such that $T=S^{-1}R$). Note that $R$ is a QR-domain if and only if $R$ is a Pr\"ufer domain for which the radical of every finitely generated ideal is the radical of a principal ideal \cite[Theorem 5]{Pe66}. In particular, every Dedekind domain with torsion class group is a QR-domain. Also, every B\'ezout domain (i.e., a domain in which every finitely generated ideal is principal) is a QR-domain.

\begin{proposition}\label{proposition8} Let $R$ be a Cohen-Kaplansky domain. The following statements are equivalent.
\begin{enumerate}
\item There is some overring $T$ of $R$ such that $T$ is a Krull domain and $R^{\bullet}\hookrightarrow T^{\bullet}$ is a transfer homomorphism.
\item $R^{\bullet}\hookrightarrow\overline{R}^{\bullet}$ is a transfer homomorphism.
\end{enumerate}
\end{proposition}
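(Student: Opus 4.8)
The plan is to prove the two implications separately, with the direction $(2)\Rightarrow(1)$ being essentially immediate and $(1)\Rightarrow(2)$ carrying the real content. For $(2)\Rightarrow(1)$, I would simply take $T=\overline{R}$. Since $R$ is a Cohen-Kaplansky domain, $R^{\bullet}$ is $s$-noetherian, and hence its root closure is a Krull monoid; the integral closure $\overline{R}$ is a semilocal principal ideal domain by condition (b) of the Cohen-Kaplansky characterization, so $\overline{R}$ is in particular a Krull domain. Moreover $\overline{R}^{\bullet}=\widetilde{R^{\bullet}}$ because $R\subseteq\overline{R}$ is a root extension (condition (d)). Thus $T=\overline{R}$ is a Krull overring for which $R^{\bullet}\hookrightarrow T^{\bullet}=\overline{R}^{\bullet}$ is the given transfer homomorphism, establishing (1).

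For the harder direction $(1)\Rightarrow(2)$, suppose $T$ is a Krull overring of $R$ with $R^{\bullet}\hookrightarrow T^{\bullet}$ a transfer homomorphism. The strategy is to show that $T^{\bullet}$ must coincide with $\widetilde{R^{\bullet}}=\overline{R}^{\bullet}$, so that the given transfer homomorphism is exactly the desired one. By Proposition~\ref{proposition1} we have $(T^{\bullet})^{\times}\cap R^{\bullet}=(R^{\bullet})^{\times}$ and $T^{\bullet}=R^{\bullet}(T^{\bullet})^{\times}$. Since $T$ is Krull, $T^{\bullet}$ is root closed, so $\widetilde{R^{\bullet}}\subseteq\widetilde{T^{\bullet}}=T^{\bullet}$; combined with $T^{\bullet}=R^{\bullet}(T^{\bullet})^{\times}\subseteq\widetilde{R^{\bullet}}(T^{\bullet})^{\times}\subseteq T^{\bullet}$ this forces $T^{\bullet}=\widetilde{R^{\bullet}}(T^{\bullet})^{\times}$. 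As in the proof of Theorem~\ref{theorem1}(1), one checks $(T^{\bullet})^{\times}\cap\widetilde{R^{\bullet}}=\widetilde{R^{\bullet}}^{\times}$: if $x\in(T^{\bullet})^{\times}\cap\widetilde{R^{\bullet}}$, then $x^k\in R^{\bullet}$ for some $k$ (root extension), whence $x^k\in(T^{\bullet})^{\times}\cap R^{\bullet}=(R^{\bullet})^{\times}$ and $x\in\widetilde{R^{\bullet}}^{\times}$.

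The main obstacle is upgrading the equality $T^{\bullet}=\widetilde{R^{\bullet}}(T^{\bullet})^{\times}$ to the genuine equality $T^{\bullet}=\widetilde{R^{\bullet}}=\overline{R}^{\bullet}$, which is exactly the point where property (U) was invoked in Theorem~\ref{theorem1}. Here property (U) is not assumed, so I would instead exploit the specific structure of Cohen-Kaplansky domains: $\overline{R}$ is a semilocal PID, and every overring of a semilocal PID is again a quotient overring of the form $S^{-1}\overline{R}$, because semilocal PIDs are B\'ezout domains and hence QR-domains, as noted before the proposition. Since $R\subseteq T\subseteq K$ and $\overline{R}^{\bullet}=\widetilde{R^{\bullet}}\subseteq T^{\bullet}$, the overring $T$ is an overring of $\overline{R}$, so $T=S^{-1}\overline{R}$ for some submonoid $S\subseteq\overline{R}^{\bullet}$. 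From $(T^{\bullet})^{\times}\cap\widetilde{R^{\bullet}}=\widetilde{R^{\bullet}}^{\times}$ and $S\subseteq(T^{\bullet})^{\times}\cap\overline{R}^{\bullet}=\widetilde{R^{\bullet}}^{\times}$ we conclude $T^{\bullet}=S^{-1}\overline{R}^{\bullet}=\overline{R}^{\bullet}$. Therefore the given map $R^{\bullet}\hookrightarrow T^{\bullet}=\overline{R}^{\bullet}$ is the transfer homomorphism required in (2). In effect, the role played abstractly by property (U) in Theorem~\ref{theorem1} is filled concretely by the QR-property of the semilocal PID $\overline{R}$.
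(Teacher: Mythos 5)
Your proposal is correct and follows essentially the same route as the paper: take $T=\overline{R}$ for $(2)\Rightarrow(1)$, and for $(1)\Rightarrow(2)$ show $\overline{R}\subseteq T$, invoke the QR-property of the PID $\overline{R}$ to write $T=S^{-1}\overline{R}$, and then use $T^{\times}\cap\overline{R}=\overline{R}^{\times}$ (via the root-extension property) to force $S\subseteq\overline{R}^{\times}$ and hence $T=\overline{R}$. The only cosmetic differences are that the paper obtains $\overline{R}\subseteq T$ directly from $\overline{R}\subseteq\overline{T}=T$ rather than via $\widetilde{R^{\bullet}}$, and it does not need your intermediate identity $T^{\bullet}=\widetilde{R^{\bullet}}(T^{\bullet})^{\times}$.
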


\begin{proof} (1) $\Rightarrow$ (2): Let $T$ be an overring of $R$ which is a Krull domain such that $R^{\bullet}\hookrightarrow T^{\bullet}$ is a transfer homomorphism. Note that $T^{\times}\cap R=R^{\times}$ and $T=RT^{\times}$. Since $R\subseteq T$ and $T$ is Krull, we have that $\overline{R}\subseteq\overline{T}=T$. Since $\overline{R}$ is a principal ideal domain, $\overline{R}$ is a QR-domain, and hence $T=S^{-1}\overline{R}$ for some submonoid $S\subseteq\overline{R}^{\bullet}$.

Next we show that $T^{\times}\cap\overline{R}=\overline{R}^{\times}$. ($\subseteq$): Let $x\in T^{\times}\cap\overline{R}$. There is some $k\in\mathbb{N}$ such that $x^k\in R$. We infer that $x^k\in T^{\times}\cap R=R^{\times}\subseteq\overline{R}^{\times}$. Since $x\in\overline{R}$, it follows that $x\in\overline{R}^{\times}$. ($\supseteq$): This is obvious.

Observe that $S\subseteq T^{\times}$. Consequently, $S\subseteq T^{\times}\cap\overline{R}=\overline{R}^{\times}$, and thus $T=S^{-1}\overline{R}=\overline{R}$. Therefore, $R^{\bullet}\hookrightarrow\overline{R}^{\bullet}$ is a transfer homomorphism.

(2) $\Rightarrow$ (1): This is clear, since $\overline{R}$ is both an overring of $R$ and a principal ideal domain, and hence $\overline{R}$ is a Krull domain.
\end{proof}

\begin{theorem}\label{theorem4} Let $R$ be a generalized Cohen-Kaplansky domain. The following statements are equivalent.
\begin{enumerate}
\item[(1)] $R$ is transfer Krull.
\item[(2)] $R_M$ is transfer Krull for each $M\in\max(R)$.
\item[(3)] $R_M$ is half-factorial for each $M\in\max(R)$.
\item[(4)] $R$ is half-factorial.
\item[(5)] $\mathcal{A}(\overline{R})=\{u\varepsilon\mid u\in\mathcal{A}(R),\varepsilon\in\overline{R}^{\times}\}$.
\item[(6)] $\mathcal{A}(R)\subseteq\mathcal{A}(\overline{R})$.
\end{enumerate}
In addition, if $R$ is seminormal, then these equivalent conditions are satisfied.
\end{theorem}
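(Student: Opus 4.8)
The plan is to deduce every implication from Theorem~\ref{theorem3} applied to the multiplicative monoid $H=R^{\bullet}$. Since $R$ is a generalized Cohen-Kaplansky domain, $H=R^{\bullet}$ is weakly factorial and, because $R\subseteq\overline{R}$ is a root extension, its root closure is $\widetilde{H}=\overline{R}^{\bullet}$, which is factorial and hence a Krull monoid. Thus Theorem~\ref{theorem3} is available for $H$. The non-localized conditions then transfer verbatim: as $R$ satisfies a monoid property precisely when $R^{\bullet}$ does, and since $\mathcal{A}(R)=\mathcal{A}(R^{\bullet})=\mathcal{A}(H)$, $\mathcal{A}(\overline{R})=\mathcal{A}(\overline{R}^{\bullet})=\mathcal{A}(\widetilde{H})$ and $\overline{R}^{\times}=\widetilde{H}^{\times}$, the conditions (1), (4), (5) and (6) of Theorem~\ref{theorem4} coincide with the corresponding conditions of Theorem~\ref{theorem3}. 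The seminormal addendum likewise follows from that of Theorem~\ref{theorem3}. So these four conditions are equivalent and are implied by seminormality, and only the localized conditions (2) and (3) remain to be matched.

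The difficulty is that (2) and (3) are indexed by $\max(R)$, whereas the corresponding conditions of Theorem~\ref{theorem3} are indexed by $\mathfrak{X}(R^{\bullet})$, the minimal nonzero (height-one) primes. The bridge is the conductor $\mathfrak{c}=(R:\overline{R})$, which by the defining property (b) is a nonzero principal ideal of $\overline{R}$ with $\overline{R}/\mathfrak{c}$ finite. First I would dispose of the primes avoiding $\mathfrak{c}$: if $\mathfrak{p}$ is a nonzero prime of $R$ with $\mathfrak{c}\not\subseteq\mathfrak{p}$, then picking $c\in\mathfrak{c}\setminus\mathfrak{p}$ yields $c\overline{R}\subseteq R$, whence $\overline{R}\subseteq R_{\mathfrak{p}}$ and $R_{\mathfrak{p}}=(R\setminus\mathfrak{p})^{-1}\overline{R}$ is a localization of the factorial domain $\overline{R}$. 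Hence $R_{\mathfrak{p}}$ is factorial, so it is half-factorial and transfer Krull. Consequently both (2) and (3) hold automatically at every prime avoiding the conductor, regardless of whether it is counted as a maximal ideal or as a height-one prime.

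Next I would treat the finitely many primes containing $\mathfrak{c}$ and show that for these the two index sets agree. Since $\overline{R}/\mathfrak{c}$ is finite, it is Artinian, so every prime of $\overline{R}$ containing $\mathfrak{c}$ is maximal, and by Krull's principal ideal theorem it is minimal over the principal ideal $\mathfrak{c}$, hence of height one. As $R\subseteq\overline{R}$ is integral with $\overline{R}$ integrally closed, lying-over, incomparability and going-down transfer both facts to $R$: a nonzero prime $\mathfrak{p}$ of $R$ contains $\mathfrak{c}$ if and only if it lies under a prime of $\overline{R}$ containing $\mathfrak{c}$, and every such $\mathfrak{p}$ is simultaneously maximal and of height one. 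Therefore the set of nonzero primes of $R$ containing $\mathfrak{c}$ is contained both in $\max(R)$ and in $\mathfrak{X}(R^{\bullet})$. Combined with the previous paragraph, ``$R_M$ is transfer Krull for each $M\in\max(R)$'' holds if and only if ``$R_{\mathfrak{p}}$ is transfer Krull for each $\mathfrak{p}\in\mathfrak{X}(R^{\bullet})$'' (both reduce to the conductor-containing primes), which is exactly condition (2) of Theorem~\ref{theorem3} for $H=R^{\bullet}$; the same reasoning identifies (3) of Theorem~\ref{theorem4} with (3) of Theorem~\ref{theorem3}. Feeding these identifications into Theorem~\ref{theorem3} closes the chain of equivalences.

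The routine parts (that $R$ has a monoid property iff $R^{\bullet}$ does, and that a localization of a factorial domain is factorial) are immediate. I expect the main obstacle to be precisely the bookkeeping in the last step: verifying cleanly that replacing the index set $\max(R)$ by $\mathfrak{X}(R^{\bullet})$ changes neither the truth of (2) nor that of (3). Note that I deliberately avoid asserting that $R$ is one-dimensional; the argument only needs that the conductor-containing primes are the height-one maximal ideals common to both index sets and carry all the content, while every remaining localization is automatically factorial.
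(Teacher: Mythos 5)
Your overall strategy is exactly the paper's: the paper proves this theorem in one line by applying Theorem~\ref{theorem3} to $H=R^{\bullet}$, using that $R$ is weakly factorial, that $R\subseteq\overline{R}$ is a root extension (so $\widetilde{R^{\bullet}}=\overline{R}^{\bullet}$), and that $\overline{R}$ is factorial. The extra work you do to bridge the index sets $\max(R)$ and $\mathfrak{X}(R^{\bullet})$ --- splitting primes according to whether they contain the conductor $\mathfrak{c}=(R:\overline{R})$, observing that conductor-avoiding localizations are localizations of $\overline{R}$ and hence factorial, and arguing that conductor-containing primes are simultaneously maximal and of height one --- addresses a point the paper passes over in silence (and which is genuinely needed: a generalized Cohen-Kaplansky domain such as $k[x,y]$ can have maximal ideals of height two), and the skeleton of that bridge is sound. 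Both you and the paper also take for granted the identification of $\mathfrak{X}(R^{\bullet})$ with the set of height-one primes of $R$; this is standard for weakly factorial domains.

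However, one step is wrongly justified and, as written, fails: you invoke going-down for $R\subseteq\overline{R}$ ``with $\overline{R}$ integrally closed.'' The going-down theorem for an integral extension of domains requires the \emph{bottom} ring to be integrally closed, and $R$ is precisely not integrally closed in the nontrivial case, so going-down is not available here. This matters, because it is the tool you use for the key fact that a nonzero prime $\mathfrak{p}$ of $R$ containing $\mathfrak{c}$ has height one. The fact is true, but needs a different argument, for instance: suppose $0\subsetneq\mathfrak{p}_1\subsetneq\mathfrak{p}$ in $R$. By lying over choose a prime $\mathfrak{q}_1$ of $\overline{R}$ with $\mathfrak{q}_1\cap R=\mathfrak{p}_1$, and by going-up (valid for every integral extension) choose $\mathfrak{q}\supseteq\mathfrak{q}_1$ with $\mathfrak{q}\cap R=\mathfrak{p}$. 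Then $\mathfrak{q}_1\neq 0$ and $\mathfrak{q}_1\subsetneq\mathfrak{q}$, so $\mathfrak{q}$ has height at least two in $\overline{R}$; but $\mathfrak{q}\supseteq\mathfrak{p}\supseteq\mathfrak{c}$, and every prime of $\overline{R}$ containing $\mathfrak{c}$ has height one: it is minimal over $\mathfrak{c}$ because $\overline{R}/\mathfrak{c}$ is finite, hence Artinian, and in a factorial domain a prime minimal over a nonzero principal ideal is generated by a prime element. (This last remark also replaces your appeal to Krull's principal ideal theorem, which presumes $\overline{R}$ noetherian --- a hypothesis not granted by the definition of a generalized Cohen-Kaplansky domain, where $\overline{R}$ is only assumed factorial.) This contradiction gives height one; with this repair your bridge, and hence the whole proof, goes through.
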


\begin{proof} This follows from Theorem~\ref{theorem3}, since $R$ is weakly factorial, $R\subseteq\overline{R}$ is a root extension and $\overline{R}$ is factorial.
\end{proof}

\section{Transfer Krull monoids and finitely generated monoids}

Let $H$ be a monoid. Then $H$ is called {\it affine} if $H$ is finitely generated and the quotient group of $H$ is torsion-free (equivalently, $H$ is isomorphic to a finitely generated additive submonoid of $\mathbb{Z}^d$ for some $d\in\mathbb{N}$). Also note that $H$ is reduced and affine if and only if $H$ is isomorphic to a finitely generated additive submonoid of $\mathbb{N}_0^d$ for some $d\in\mathbb{N}$. For a profound introduction to affine monoids we refer to \cite{Br-Gu09}. Clearly, affine monoids are finitely generated, and hence the root closure of an affine monoid is a Krull monoid by \cite[Propositions 2.7.4.2 and 2.7.11 and Theorems 2.6.5.1 and 2.7.13]{Ge-HK06}. First we want to point out that even finitely generated finitely primary monoids of rank one and exponent larger than one need not be transfer Krull.

\begin{example}\label{example1} Let $F$ be a DVM for which $F^{\times}$ is finite and cyclic and let $p\in F$ and $\alpha\in F^{\times}$ be such that $F\setminus F^{\times}=pF$ and $F^{\times}=\langle\{\alpha\}\rangle$. Set $n=|F^{\times}|$ and $H=[\{p\}\cup\{\alpha^k p^n\mid k\in [1,n-1]\}]$. Then $H$ is a reduced finitely generated finitely primary monoid of rank one and exponent $n$ and $\widetilde{H}=F$. Furthermore, if $n\geq 2$, then $H$ is not transfer Krull.
\end{example}

\begin{proof} Let $K$ be the quotient group of $F$. It is obvious that $K$ is the quotient group of $H$, $H\not=K$ and $H$ is finitely generated. Clearly, $H\subseteq F=[\{p,\alpha\}]$, and hence $\widetilde{H}\subseteq\widehat{H}\subseteq\widehat{F}=F$. Since $x^n\in H$ for each $x\in F$, we infer that $\widetilde{H}=\widehat{H}=F$. Observe that $H^{\times}=\widetilde{H}^{\times}\cap H=F^{\times}\cap H=\{1\}$, and thus $H$ is reduced. Let $a,b\in H\setminus H^{\times}$. Then $a,b\in F\setminus F^{\times}$, and since $F$ is primary, there are some $k\in\mathbb{N}$ and some $c\in F$ such that $b^k=ca$. We infer that $b^{kn}=c^na^n$, and since $c^n\in H$, it follows that $a\mid_H b^{kn}$. Therefore, $H$ is primary. Note that $\mathfrak{X}(\widehat{H})=\{p\widehat{H}\}$ and $p^n\widehat{H}\subseteq (H:\widehat{H})$. Together with the fact that $\widehat{H}$ is a DVM, this implies that $H$ is a finitely primary monoid of rank one and exponent $n$. Now let $n\geq 2$ and assume that $H$ is transfer Krull. It follows from Corollary~\ref{corollary2} that $H\subseteq\widetilde{H}$ is inert. Since $p\alpha p^{n-1}\in H$, there is some $k\in [0,n-1]$ such that $\alpha^k p,\alpha^{1-k}p^{n-1}\in H$. Since $\alpha^k p\in H$, we have that $k=0$, and hence $\alpha p^{n-1}\in H$, a contradiction.
\end{proof}

Next we show that even if $H$ is a factorial monoid with finitely many prime elements, then $H$ need not satisfy property (U).

\begin{example}\label{example2} Let $H=\mathbb{N}_0^2$ and $T=\{(x,y)\in\mathbb{Z}^2\mid x+y\geq 0\}$. Then $H$ is factorial with precisely two prime elements, $T$ is a DVM and an overmonoid of $H$, $H\hookrightarrow T$ is a transfer homomorphism and $H$ does not satisfy property (U).
\end{example}

\begin{proof} Obviously, $H$ is a factorial monoid and $\{(0,1),(1,0)\}$ is the set of prime elements of $H$. It is also easy to see that $T$ is an overmonoid of $H$. Since $H$ is not a valuation monoid, it follows from Proposition~\ref{proposition6} that $H$ does not satisfy property (U). Note that $T^{\times}=\{(k,-k)\mid k\in\mathbb{Z}\}$ and $T=\{(n,0)+(k,-k)\mid n\in\mathbb{N}_0,k\in\mathbb{Z}\}$. This implies that $T$ is a DVM (since $T$ is atomic and has precisely one atom up to associates) and $T^{\times}\cap H=H^{\times}$. By Lemma~\ref{lemma1}(2), it remains to show that $H\subseteq T$ is inert. Let $a,b\in T$ be such that $a+b\in H$. There are some $x,y,z,w\in\mathbb{Z}$ such that $a=(x,y)$, $b=(z,w)$ and $x+y,z+w\geq 0$. Since $a+b\in H$, it follows that $x+z,y+w\geq 0$. Set $k=\min\{y,z\}$ and $\varepsilon=(k,-k)$. Then $\varepsilon\in T^{\times}$ and $a+\varepsilon,b-\varepsilon\in H$.
\end{proof}

Let $H$ be a monoid. Then $H$ is called {\it weakly Krull} if $H=\bigcap_{\mathfrak{p}\in\mathfrak{X}(H)} H_{\mathfrak{p}}$ and $\{\mathfrak{p}\in\mathfrak{X}(H)\mid a\in\mathfrak{p}\}$ is finite for each $a$ in $H$. Note that $H$ is a Krull monoid if and only if it is weakly Krull and $H_\mathfrak{p}$ is a DVM for each $\mathfrak{p}\in\mathfrak{X}(H)$. A Mori monoid $H$ which is not a group is weakly Krull if and only if $t$-$\max(H)=\mathfrak{X}(H)$ by \cite[Theorem 24.5]{HK98}. Moreover, $H$ is weakly factorial if and only if $H$ is weakly Krull and every $t$-invertible $t$-ideal of $H$ is principal. Weakly Krull monoids were studied by Halter-Koch (\cite[Chapter 22 and Chapter 24.5]{HK98}). Clearly, every primary monoid is a weakly Krull monoid. Observe that even a finitely generated monoid whose root closure is factorial need not be weakly Krull (\cite[Example 2]{Ch-HK-K02}). For more information on weakly Krull monoids (and weakly Krull domains) we refer to \cite{An-Mo-Za92, HK95, HK98}.

\begin{proposition}\label{proposition9} Let $F$ be the free abelian monoid with basis $\{a,b\}$ and quotient group $K$ and let $H$ be the submonoid of $F$ generated by $\{a,ab,a^2b^5\}$.
\begin{enumerate}
\item[(1)] $H$ is a reduced affine monoid with quotient group $K=\{a^rb^s\mid r,s\in\mathbb{Z}\}$, $\widetilde{H}\subseteq F$ and $\mathcal{A}(H)=\{a,ab,a^2b^5\}$.
\item[(2)] $\widetilde{H}=\{a^rb^s\mid r,s\in\mathbb{N}_0,5r\geq 2s\}=[\{a,ab,ab^2,a^2b^5\}]$ and $\mathcal{A}(H)\subseteq\mathcal{A}(\widetilde{H})$.
\item[(3)] $\mathfrak{X}(H)=\{aH\cup abH,abH\cup a^2b^5H\}$ and $H$ is weakly Krull.
\item[(4)] There is some overmonoid $B$ of $H$ such that $B$ is Krull, $B^{\times}\cap H=H^{\times}$ and $B=HB^{\times}$.
\item[(5)] There is no overmonoid $T$ of $\widetilde{H}$ such that $\mathcal{A}(H)\subseteq\mathcal{A}(T)$ and $T=HT^{\times}$. In particular, $H$ is not transfer Krull.
\end{enumerate}
\end{proposition}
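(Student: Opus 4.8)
The plan is first to reduce the ``in particular'' clause to the non-existence statement, and then to prove the non-existence statement by a direct analysis of the relation $a\cdot ab^2=(ab)^2$ inside a hypothetical overmonoid $T$.

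For the reduction, suppose $H$ were transfer Krull. By Proposition~\ref{proposition1} there is a Krull overmonoid $T$ of $H$ with $H\subseteq T\subseteq K$ such that $H\hookrightarrow T$ is a transfer homomorphism; in particular $T=HT^{\times}$. Since $T$ is Krull it is root closed, so $\widetilde{H}\subseteq\widetilde{T}=T$ and $T$ is an overmonoid of $\widetilde{H}$. By Proposition~\ref{proposition5}(2) we have $\mathcal{A}(T)=\{u\varepsilon\mid u\in\mathcal{A}(H),\varepsilon\in T^{\times}\}$, whence $\mathcal{A}(H)\subseteq\mathcal{A}(T)$. Thus $T$ would be an overmonoid of $\widetilde{H}$ with $\mathcal{A}(H)\subseteq\mathcal{A}(T)$ and $T=HT^{\times}$, contradicting the first assertion. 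So it suffices to prove that no such $T$ exists.

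Assume for contradiction that $T$ is an overmonoid of $\widetilde{H}$ with $\mathcal{A}(H)\subseteq\mathcal{A}(T)$ and $T=HT^{\times}$. Since $H$ is atomic (being affine) and $\mathcal{A}(H)\subseteq\mathcal{A}(T)$, Proposition~\ref{proposition5}(4) gives $T^{\times}\cap H=H^{\times}=\{1\}$. I work with exponents in $K=\{a^rb^s\mid r,s\in\mathbb{Z}\}$ and set $V=T^{\times}$, a subgroup of $K$ with $V\cap H=\{1\}$; then $T=HV$ and every non-unit of $T$ has the form $h\varepsilon$ with $h\in H\setminus\{1\}$ and $\varepsilon\in V$. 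Consequently, for $u\in\mathcal{A}(H)=\{a,ab,a^2b^5\}$ the requirement $u\in\mathcal{A}(T)$ is equivalent to
\[
(\ast)\qquad uh^{-1}\notin V\ \text{ for every } h\in H\ \text{ that is a product of two non-units of } H ,
\]
since $u\simeq_T h$ for such an $h$ would exhibit $u$ as a product of two non-units of $T$. The crucial input is the relation $a\cdot ab^2=(ab)^2$ together with $ab^2\in\widetilde{H}\subseteq T$ (recall $\widetilde{H}=[\{a,ab,ab^2,a^2b^5\}]$ by part~(2)). Excluding the immediate case $ab^2\in V$ (which forces $a\simeq_T(ab)^2$, violating $(\ast)$), I write $ab^2=c\varepsilon$ with $c\in H\setminus\{1\}$, $\varepsilon\in V$, and put $v_0=ab^2c^{-1}\in V$.

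The contradiction will be produced by exhibiting an atom $u\in\{a,ab,a^2b^5\}$, a genuinely reducible $h\in H$, and an integer $m$ with $uh^{-1}=v_0^{\,m}\in V$, violating $(\ast)$. The arithmetic fact driving the choice of $m$ is that the smallest power of $ab^2$ lying in $H$ is the cube, namely $(ab^2)^3=ab\cdot a^2b^5$, which yields $v_0^{\,3}=(ab\cdot a^2b^5)c^{-3}$ (while $a\cdot ab^2=(ab)^2$ gives $v_0=(ab)^2(ac)^{-1}$). I would split into two cases according to the shape of $c$. If some factorization of $c$ into atoms of $H$ uses $ab$, then $c^3(ab)^{-1}\in H$ is a product of at least two non-units and $a^2b^5\cdot\bigl(c^3(ab)^{-1}\bigr)^{-1}=v_0^{\,3}\in V$, contradicting $(\ast)$ for $u=a^2b^5$. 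Otherwise $c=a^{\alpha}(a^2b^5)^{\gamma}$, and according to the subcases $\gamma=0$, $\alpha=0$, or $\alpha,\gamma\geq 1$ one writes the appropriate power $v_0$ or $v_0^{\,2}$ as $uh^{-1}$ with $u\in\{a,a^2b^5\}$ and $h$ an explicit product of at least two atoms of $H$ (for instance, for $c=(a^2b^5)^{\gamma}$ one checks $a\cdot\bigl(ab\,(a^2b^5)^{2\gamma-1}\bigr)^{-1}=v_0^{\,2}$), again contradicting $(\ast)$.

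The main obstacle is that $V=T^{\times}$ is not known explicitly and may have rank as large as two, so the contradiction cannot be read off from a single valuation or from a length function; indeed the element $a^5b^5=(ab)^5=a^3\cdot a^2b^5$ already shows that $T$ need not be half-factorial. The argument must therefore be uniform in the unknown group $V$, using only that $v_0\in V$ and hence $v_0^{\,m}\in V$. The work then lies in the explicit exponent bookkeeping that, for every admissible $c\in H\setminus\{1\}$, produces an atom $u$ and a reducible $h\in H$ with $uh^{-1}$ a power of $v_0$; verifying in each case that the constructed $h$ actually lies in $H$ and is a product of two non-units (so that $(\ast)$ applies) is the routine but delicate part.
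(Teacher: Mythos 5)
Your proposal addresses only part (5) of the proposition (together with its ``in particular'' clause). Parts (1)--(4) are nowhere proved, and your argument explicitly consumes part (2) (namely $ab^2\in\widetilde{H}=[\{a,ab,ab^2,a^2b^5\}]$, which you need to place $ab^2$ in $T$) as well as $\mathcal{A}(H)=\{a,ab,a^2b^5\}$ from part (1). Using earlier parts of a multi-part proposition to prove a later part is legitimate, but a complete solution must also establish (1)--(4); in particular the computation of $\widetilde{H}$ in (2) (the paper proves necessity of $5r\geq 2s$ via $(5r-2s)k=5\alpha+3\beta\geq 0$ and sufficiency by an explicit factorization), the prime-ideal computation and weak Krullness in (3), and the overmonoid $B=\{a^rb^s\mid r\in\mathbb{N}_0,\,s\in\mathbb{Z}\}$ in (4). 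As a proof of the full statement, the proposal is therefore incomplete.

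For part (5) itself your argument is correct and does close, but it executes the case analysis differently from the paper. Both proofs share the same skeleton: assume $T$ exists, get $T^{\times}\cap H=H^{\times}=\{1\}$ from Proposition~\ref{proposition5}(4), and decompose $ab^2\in\widetilde{H}\subseteq T=HT^{\times}$ as $ab^2=c\varepsilon$ with $c\in H$, $\varepsilon\in T^{\times}$. From there the paper fixes $r,s,t\in\mathbb{N}_0$ with $a^{1-r}b^2(ab)^{-s}(a^2b^5)^{-t}\in T^{\times}$ and eliminates $r>0$ and $t>0$ by producing auxiliary elements of $T$ (first $b^2$, then $a^{-1}b^{-3}$ and $b^{-1}$) that contradict $a^2b^5,ab\in\mathcal{A}(T)$ or $b\notin T^{\times}$, and finally derives $(ab)^s(ab^2)^{-1}\in T^{\times}\cap H=\{1\}$, i.e.\ $ab^2\in H$, a contradiction. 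You instead route every contradiction through the single unit $v_0=ab^2c^{-1}$ and the (correct) criterion $(\ast)$, exhibiting an atom $u$ and a reducible $h\in H$ with $uh^{-1}$ a power of $v_0$, so that $u\simeq_T h$ kills $u\in\mathcal{A}(T)$. I checked the bookkeeping you deferred, and it works: if $ab\mid_H c$, say $c=(ab)c^{\prime}$, then $u=a^2b^5$, $h=c^3(ab)^{-1}=(ab)^2(c^{\prime})^3$ and $uh^{-1}=v_0^3$ via $(ab^2)^3=(ab)(a^2b^5)$; if $ab\nmid_H c$, so $c=a^{\alpha}(a^2b^5)^{\gamma}$, then $u=a^2b^5$, $h=a^{2\alpha-1}(ab)(a^2b^5)^{2\gamma}$, $uh^{-1}=v_0^2$ works for $\alpha\geq 1$, and $u=a$, $h=(ab)(a^2b^5)^{2\gamma-1}$, $uh^{-1}=v_0^2$ works for $\alpha=0$, $\gamma\geq 1$; the excluded case $ab^2\in T^{\times}$ gives $a\simeq_T(ab)^2$ directly. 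The trade-off: the paper's argument is shorter and needs no uniform criterion, while yours is more systematic (every case ends in the same type of contradiction) and makes transparent why no length or valuation shortcut is available, at the price of heavier exponent bookkeeping.
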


\begin{proof} (1) This is clear.

\medskip
(2) First let $x\in\widetilde{H}$. Then $x=a^rb^s$ for some $r,s\in\mathbb{N}_0$ and $x^k\in\mathbb{N}$ for some $k\in\mathbb{N}$. There are some $\alpha,\beta,\gamma\in\mathbb{N}_0$ such that $kr=\alpha+\beta+2\gamma$ and $ks=\beta+5\gamma$. Then $(5r-2s)k=5\alpha+3\beta\geq 0$, and hence $5r\geq 2s$.

Let $r,s\in\mathbb{N}_0$ be such that $5r\geq 2s$. Then $s=5q+m$ for some $q\in\mathbb{N}_0$ and $m\in [0,4]$. Moreover, there are some $p\in [0,2]$ and $n\in\{0,1\}$ with $m=2p+n$. We obtain that $r\geq 2q+p+n$ and $a^rb^s=a^{r-2q-p-n}(ab)^n(ab^2)^p(a^2b^5)^q\in [\{a,ab,ab^2,a^2b^5\}]$.

Finally, we have that $(ab^2)^3=ab(a^2b^5)\in H$, and thus $[\{a,ab,ab^2,a^2b^5\}]\subseteq\widetilde{H}$. It is now straightforward to show that $\mathcal{A}(H)\subseteq\{a,ab,ab^2,a^2b^5\}=\mathcal{A}(\widetilde{H})$.

\medskip
(3) Set $P=aH\cup abH$ and $Q=abH\cup a^2b^5H$. Note that if $N$ is a prime $s$-ideal of $H$, then $N=\bigcup_{u\in\mathcal{A}(H)\cap N} uH$. Since $(ab)^5=(a^2b^5)a^3$, we obtain that $aH$, $abH$, $a^2b^5H$ and $aH\cup a^2b^5H$ are not prime $s$-ideals of $H$. Next we show that $P$ and $Q$ are prime. (Then we conclude by the aforementioned facts that $\mathfrak{X}(H)=\{P,Q\}$.)

Let $x,y\in H\setminus P$. Then $xy=(a^2b^5)^k$ for some $k\in\mathbb{N}_0$. Let $r,s,t\in\mathbb{N}_0$ be such that $xy=a^r(ab)^s(a^2b^5)^t$. Consequently, $2k=r+s+2t$ and $5k=s+5t$, and thus $5r+5s+10t=10k=2s+10t$. We infer that $5r+3s=0$ and $r=s=0$. Therefore, $xy\in H\setminus P$.

Let $x,y\in H\setminus Q$. Then $xy=a^k$ for some $k\in\mathbb{N}_0$. Let $r,s,t\in\mathbb{N}_0$ be such that $xy=a^r(ab)^s(a^2b^5)^t$. Then $k=r+s+2t$ and $0=s+5t$. This implies that $s=t=0$. Consequently, $xy\in H\setminus Q$.

Finally, we show that $H$ is weakly Krull. Since $H$ has finitely many prime $s$-ideals, it remains to show that $\bigcap_{N\in\mathfrak{X}(H)} H_N=H$. Note that $H_P=\{a^r(ab)^s(a^2b^5)^t\mid r,s\in\mathbb{N}_0,t\in\mathbb{Z}\}$ and $H_Q=\{a^r(ab)^s(a^2b^5)^t\mid r\in\mathbb{Z},s,t\in\mathbb{N}_0\}$. Let $x\in H_P\cap H_Q$. Then $x=a^r(ab)^s(a^2b^5)^t=a^{r^{\prime}}(ab)^{s^{\prime}}(a^2b^5)^{t^{\prime}}$ for some $r,s,s^{\prime},t^{\prime}\in\mathbb{N}_0$ and $r^{\prime},t\in\mathbb{Z}$. It follows that $r+s+2t=r^{\prime}+s^{\prime}+2t^{\prime}$ and $s+5t=s^{\prime}+5t^{\prime}$, and hence $r^{\prime}+3t=r+3t^{\prime}\geq 0$. Consequently, $r^{\prime}\geq 0$ or $t\geq 0$, and thus $x\in H$.

\medskip
(4) Set $B=\{a^rb^s\mid r\in\mathbb{N}_0,s\in\mathbb{Z}\}$. Clearly, $B$ is an overmonoid of $H$, $B$ is a DVM and $B^{\times}=\{b^s\mid s\in\mathbb{Z}\}$. Therefore, $B^{\times}\cap H=H^{\times}$ and $B=HB^{\times}$.

\medskip
(5) Assume that there is some overmonoid $T$ of $\widetilde{H}$ such that $\mathcal{A}(H)\subseteq\mathcal{A}(T)$ and $T=HT^{\times}$. We infer by Proposition~\ref{proposition5} that $T^{\times}\cap H=H^{\times}$. Note that $ab^2\in T\setminus T^{\times}$ (since $T^{\times}\cap\widetilde{H}=\{1\}$) and $b\not\in T^{\times}$ (for if $b\in T^{\times}$, then $a^2\simeq_T a^2b^5\in\mathcal{A}(T)$, a contradiction). Since $ab^2\in HT^{\times}$, there are some $r,s,t\in\mathbb{N}_0$ such that $a^{1-r}b^2(ab)^{-s}(a^2b^5)^{-t}\in T^{\times}$.

First we assume that $r>0$. Then $b^2\in T$, and hence $a^2b^5=a(ab)(b^2)^2\not\in\mathcal{A}(T)$, a contradiction. Therefore, $r=0$.

Next we assume that $t>0$. Then $a^{-1}b^{-3}(ab)^{-s}(a^2b^5)^{1-t}\in T^{\times}$, and thus $a^{-1}b^{-3}\in T$. This implies that $b^{-1}=ab^2a^{-1}b^{-3}\in T$. Since $ab^2b^{-1}=ab\in\mathcal{A}(T)$, we have that $b^{-1}\in T^{\times}$, and hence $b\in T^{\times}$, a contradiction. We infer that $t=0$.

Since $b,ab^2\not\in T^{\times}$, it follows that $s>1$. Consequently, $(ab)^s(ab^2)^{-1}\in T^{\times}\cap H=\{1\}$, and thus $ab^2\in H$, a contradiction.
\end{proof}

Note that by Proposition~\ref{proposition9}, we have that conditions b and c in Theorem~\ref{theorem2} are no longer equivalent if $H$ is a reduced affine weakly Krull monoid.

\begin{proposition}\label{proposition10} Let $F$ be the free abelian monoid with basis $\{a,b\}$ and quotient group $K$ and let $H$ be the submonoid of $F$ generated by $\{a,ab^3,ab^5\}$.
\begin{enumerate}
\item[(1)] $H$ is a reduced affine monoid with quotient group $K=\{a^rb^s\mid r,s\in\mathbb{Z}\}$, $\widetilde{H}\subseteq F$ and $\mathcal{A}(H)=\{a,ab^3,ab^5\}$.
\item[(2)] $\widetilde{H}=\{a^rb^s\mid r,s\in\mathbb{N}_0,5r\geq s\}=[\{a,ab,ab^2,ab^3,ab^4,ab^5\}]$ and $\mathcal{A}(H)\subseteq\mathcal{A}(\widetilde{H})$.
\item[(3)] $\mathfrak{X}(H)=\{aH\cup ab^3H,ab^3H\cup ab^5H\}$, $H$ is half-factorial and weakly Krull and $\widetilde{H}$ is half-factorial.
\item[(4)] $\mathcal{A}(\widetilde{H})\not=\{u\varepsilon\mid u\in\mathcal{A}(H),\varepsilon\in\widetilde{H}^{\times}\}$.
\item[(5)] There is some $P\in\mathfrak{X}(H)$ such that $H_P$ is not transfer Krull.
\end{enumerate}
\end{proposition}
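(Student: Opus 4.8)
I will pass to exponent coordinates, identifying $a^rb^s\in K$ with $(r,s)\in\mathbb{Z}^2$; then $H$ is the additive submonoid of $\mathbb{N}_0^2$ generated by $(1,0),(1,3),(1,5)$. Write $S$ for the numerical monoid generated by $3$ and $5$ and, for $s\in S$, let $\mu(s)$ be the least number of summands in a representation $s=3\beta+5\gamma$ with $\beta,\gamma\in\mathbb{N}_0$; unravelling the definition gives $(r,s)\in H$ if and only if $s\in S$ and $r\ge\mu(s)$. With this dictionary, parts (1), (2), (4) and the half-factoriality claims in (3) are quick. In (1) the quotient group contains $a$, $b^3=a^{-1}(ab^3)$ and $b^5$, hence $b$ since $\gcd(3,5)=1$, so it equals $K$; the inclusion $H\subseteq F$ forces $H$ reduced and $\widetilde H\subseteq\widetilde F=F$; and each generator has $a$-degree $1$ and so cannot be a product of two non-units, giving $\mathcal A(H)=\{a,ab^3,ab^5\}$. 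For (2) the linear form $\varphi(r,s)=5r-s$ satisfies $\varphi(r,s)=5\alpha+2\beta\ge0$ on $H$, whence $\widetilde H\subseteq\{(r,s):r,s\ge0,\ 5r\ge s\}$; the reverse inclusion and the generating set $\{ab^j:0\le j\le5\}$ follow by checking that each $ab^j$ lies in $\widetilde H$ (for instance $(ab)^3=a^2(ab^3)$, $(ab^2)^3=a(ab^3)^2$, $(ab^4)^2=(ab^3)(ab^5)$) and that every $(r,s)$ with $0\le s\le5r$ is a product of $r$ of these generators. Since every atom of $H$ and of $\widetilde H$ has $a$-degree $1$, each factorization of $(r,s)$ into atoms uses exactly $r$ of them, so $|\mathsf L(x)|=1$ throughout and both $H$ and $\widetilde H$ are half-factorial; as $\widetilde H$ is reduced, $\{u\varepsilon:u\in\mathcal A(H),\varepsilon\in\widetilde H^\times\}=\mathcal A(H)$ is properly contained in $\mathcal A(\widetilde H)=\{ab^j:0\le j\le5\}$, giving (4).

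For the minimal primes in (3) I follow the pattern of Proposition~\ref{proposition9}: every prime $s$-ideal $N$ equals $\bigcup_{u\in\mathcal A(H)\cap N}uH$, so it is enough to test the subsets of $\{a,ab^3,ab^5\}$, and the relation $(ab^3)^5=a^2(ab^5)^3$ eliminates the wrong ones. Indeed $(ab^3)^2=a^2b^6$ and $(ab^3)^3=a^3b^9$ lie outside $aH$, outside $ab^5H$ and hence outside $aH\cup ab^5H$ (in each case the relevant quotient has a $b$-degree outside $S$ or an $a$-degree below $\mu$ of its $b$-degree), while their product $(ab^3)^5=a^2(ab^5)^3$ lies in $aH$ and in $ab^5H$; likewise $a,a^4b^{15}\notin ab^3H$ but $a\cdot a^4b^{15}=(ab^3)^5\in ab^3H$. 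Thus $aH$, $ab^3H$, $ab^5H$ and $aH\cup ab^5H$ are not prime. To see that $P=aH\cup ab^3H$ and $Q=ab^3H\cup ab^5H$ are prime I compute their complements: since $a\nmid_H(r,s)$ forces $r=\mu(s)$, and $ab^3\mid_H(\mu(s),s)$ exactly when some minimal representation of $s$ uses a $3$ (equivalently when $5\nmid s$), one gets $H\setminus P=\{(ab^5)^k:k\ge0\}$; and any $(r,s)\in H$ with $s>0$ is divisible by $ab^5$ (when a representation of $s$ uses a $5$) or by $ab^3$ (when $s$ is a multiple of $3$), so $H\setminus Q=\{a^k:k\ge0\}$. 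Both complements are submonoids, so $P$ and $Q$ are prime and $\mathfrak X(H)=\{P,Q\}$.

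It remains to prove weak Krullness, where the finiteness of $\mathfrak X(H)$ reduces the claim to $H_P\cap H_Q=H$. Inverting the complements gives $H_P=\{a^r(ab^3)^s(ab^5)^t:r,s\in\mathbb N_0,\ t\in\mathbb Z\}$ and $H_Q=\{a^r(ab^3)^s(ab^5)^t:r\in\mathbb Z,\ s,t\in\mathbb N_0\}$. Let $x\in H_P\cap H_Q$. If its $H_P$-representation has $t\ge0$, or its $H_Q$-representation has $r\ge0$, then $x\in H$. Otherwise I compare the two representations on $a$- and $b$-degree: from $3s_1+5t_1=3s_2+5t_2$ with $t_1<0\le t_2$ I obtain $3\mid(t_2-t_1)$, so $t_2-t_1=3d$ with $d\ge1$ and $s_1=s_2+5d$; matching $a$-degrees then yields $r_2=r_1+2d\ge2$, contradicting $r_2<0$. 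Hence $x\in H$ and $H$ is weakly Krull.

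Finally, for (5) I localize at $Q$. From the description above, $H_Q=\{a^mb^s:m\in\mathbb Z,\ s\in S\}$ with $H_Q^\times=\{a^m:m\in\mathbb Z\}$, so $(H_Q)_{\mathrm{red}}\cong S$, which is not half-factorial because $15=3+3+3+3+3=5+5+5$ has lengths $5$ and $3$. On the other hand $\widetilde{H_Q}=\{a^mb^s:m\in\mathbb Z,\ s\in\mathbb N_0\}$ is a valuation monoid with maximal ideal $b\widetilde{H_Q}$, hence a DVM, so Corollary~\ref{corollary2} applies to $H_Q$: it is transfer Krull if and only if it is half-factorial. As it is not half-factorial, $H_Q$ is not transfer Krull, which proves (5). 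I expect the main obstacle to be the combinatorial bookkeeping in (3): pinning down the complements $H\setminus P$ and $H\setminus Q$ via the minimal-length function $\mu$ on $S$, and carrying out the degree comparison that gives $H_P\cap H_Q=H$.
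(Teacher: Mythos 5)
Your proof is correct and follows essentially the same route as the paper's: the linear form $5r-s$ for computing $\widetilde{H}$, the observation that all atoms have $a$-degree $1$ (giving half-factoriality), elimination of the non-prime candidates among unions of principal ideals via the relation $(ab^3)^5=a^2(ab^5)^3$, the degree comparison yielding $H_P\cap H_Q=H$, and the reduction of (5) to Corollary~\ref{corollary2} via a localization whose root closure is a DVM. The only (immaterial) deviation is that in (5) you localize at $Q=ab^3H\cup ab^5H$, so that $(H_Q)_{\mathrm{red}}$ is the numerical monoid generated by $3$ and $5$ with $3,5\in\mathsf{L}(b^{15})$, whereas the paper localizes at $P=aH\cup ab^3H$ and gets $2,5\in\mathsf{L}_{H_P}(a^5b^{15})$; both give the same conclusion.
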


\begin{proof} (1) This is straightforward to prove.

\medskip
(2) Let $x\in\widetilde{H}$. There are some $r,s\in\mathbb{N}_0$ and some $k\in\mathbb{N}$ such that $x=a^rb^s$ and $x^k\in H$. Consequently, there are some $\alpha,\beta,\gamma\in\mathbb{N}_0$ such that $kr=\alpha+\beta+\gamma$ and $ks=3\beta+5\gamma$. We infer that $(5r-s)k=5\alpha+2\beta\geq 0$. This implies that $5r\geq s$.

Now let $r,s\in\mathbb{N}_0$ be such that $5r\geq s$. There are some $q\in\mathbb{N}_0$ and $m\in [0,4]$ such that $s=5q+m$. Set $n=\lceil\frac{m}{5}\rceil$. It follows that $r\geq q+n$ and $a^rb^s=a^{r-q-n}(ab^m)^n(ab^5)^q\in [\{a,ab,ab^2,ab^3,ab^4,ab^5\}]$. If $g\in [0,5]$, then $(ab^g)^5\in H$ and thus $[\{a,ab,ab^2,ab^3,ab^4,ab^5\}]\subseteq\widetilde{H}$.

It is now easy to see that $\mathcal{A}(H)\subseteq\{a,ab,ab^2,ab^3,ab^4,ab^5\}=\mathcal{A}(\widetilde{H})$.

\medskip
(3) It is obvious that $H$ and $\widetilde{H}$ are half-factorial (e.g. see \cite[Lemma 2]{Ga-Oj-Sa13}). Set $P=aH\cup ab^3H$ and $Q=ab^3H\cup ab^5H$. Since $a^2(ab^5)^3=(ab^3)^5$, we have that $aH$, $abH$, $ab^5H$ and $aH\cup ab^5H$ are not prime $s$-ideals of $H$. Moreover, it is easy to show that $P$ and $Q$ are prime $s$-ideals of $H$. Therefore, $\mathfrak{X}(H)=\{P,Q\}$. Observe that $H_P=\{a^r(ab^3)^s(ab^5)^t\mid r,s\in\mathbb{N}_0,t\in\mathbb{Z}\}$ and $H_Q=\{a^r(ab^3)^s(ab^5)^t\mid r\in\mathbb{Z},s,t\in\mathbb{N}_0\}$. It remains to show that $\bigcap_{N\in\mathfrak{X}(H)} H_N=H$. Let $x\in H_P\cap H_Q$. Then $x=a^r(ab^3)^s(ab^5)^t=a^{r^{\prime}}(ab^3)^{s^{\prime}}(ab^5)^{t^{\prime}}$ for some $r,s,s^{\prime},t^{\prime}\in\mathbb{N}$ and $r^{\prime},t\in\mathbb{Z}$. This implies that $r+s+t=r^{\prime}+s^{\prime}+t^{\prime}$ and $3s+5t=3s^{\prime}+5t^{\prime}$. Therefore, $3r-2t=3r^{\prime}-2t^{\prime}$, and hence $3r^{\prime}+2t=3r+2t^{\prime}\geq 0$. We infer that $r^{\prime}\geq 0$ or $t\geq 0$, and thus $x\in H$.

\medskip
(4) This is clear, since $\widetilde{H}^{\times}=\{1\}$ and $ab\in\mathcal{A}(\widetilde{H})\setminus\mathcal{A}(H)$.

\medskip
(5) Set $P=aH\cup ab^3H$. Then $P\in\mathfrak{X}(H)$ by (3) and $H_P=\{a^r(ab^3)^s(ab^5)^t\mid r,s\in\mathbb{N}_0,t\in\mathbb{Z}\}$. Observe that $H_P^{\times}=\{(ab^5)^k\mid k\in\mathbb{Z}\}$ and $\mathcal{A}(H_P)=\{a(ab^5)^k,ab^3(ab^5)^k\mid k\in\mathbb{Z}\}$. Since $a^2(ab^5)^3=(ab^3)^5$, we have that $2,5\in\mathsf{L}_{H_P}(a^5b^{15})$, and hence $H_P$ is not half-factorial. Therefore, $H_P$ is not transfer Krull by Corollary~\ref{corollary2} (since $\widetilde{H_P}$ is a DVM, see the proof of Theorem~\ref{theorem3}).
\end{proof}

We obtain by Proposition~\ref{proposition10} that a half-factorial monoid whose root closure is half-factorial and Krull need not satisfy the equivalent conditions in Theorem~\ref{theorem2}(2). It is also pointed out by this result that if $H$ is a half-factorial weakly Krull monoid whose root closure is Krull, then the localization $H_P$ (where $P\in\mathfrak{X}(H)$) need not be transfer Krull (in contrast to Corollary~\ref{corollary4}).

\begin{proposition}\label{proposition11} Let $F$ be the free abelian monoid with basis $\{a,b,c,d\}$ and quotient group $K$ and let $H$ be the submonoid of $F$ generated by $\{ab,ac,ad,abc,bcd\}$.
\begin{enumerate}
\item[(1)] $H$ is a reduced affine monoid with quotient group $K=\{a^rb^sc^td^u\mid r,s,t,u\in\mathbb{Z}\}$, $\widetilde{H}\subseteq F$ and $\mathcal{A}(H)=\{ab,ac,ad,abc,bcd\}$.
\item[(2)] $\widetilde{H}=\{a^rb^sc^td^u\mid r,s,t,u\in\mathbb{N}_0,r\leq s+t+u,s\leq r+\min\{t,u\},t\leq r+\min\{s,u\},u\leq r+\min\{s,t\},s+u\leq r+2t,t+u\leq r+2s\}=[\{ab,ac,ad,abc,bcd,abcd\}]$.
\item[(3)] $\mathfrak{X}(H)=\{abH\cup adH,acH\cup adH,adH\cup bcdH,abH\cup abcH,acH\cup abcH,abcH\cup bcdH\}$ and $H$ is weakly Krull.
\item[(4)] There is some overmonoid $T$ of $H$ such that $T$ is Krull and $\mathcal{A}(T)=\{u\varepsilon\mid u\in\mathcal{A}(H),\varepsilon\in T^{\times}\}$.
\item[(5)] $H$ is not transfer Krull.
\end{enumerate}
\end{proposition}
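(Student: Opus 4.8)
The plan is to handle parts (1)--(5) by the same explicit method used for Propositions~\ref{proposition9} and~\ref{proposition10}, writing every element of $F$ multiplicatively as a monomial $a^rb^sc^td^u$ and reading off the exponent vectors $ab=(1,1,0,0)$, $ac=(1,0,1,0)$, $ad=(1,0,0,1)$, $abc=(1,1,1,0)$, $bcd=(0,1,1,1)$ in $K=\mathbb{Z}^4$. For (1), the five generators span $\mathbb{Z}^4$ as a group, all lie in $\mathbb{N}_0^4$ (so $H$ is reduced), and $\widetilde{H}\subseteq\widetilde{F}=F$ since $F$ is factorial; each generator is an atom because it has total degree $2$ or $3$, while any product of two non-units has degree $\geq 4$. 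For (2) the inclusion into the inequality region is routine: if $x=a^rb^sc^td^u\in\widetilde{H}$ and $x^k=(ab)^{\alpha}(ac)^{\beta}(ad)^{\gamma}(abc)^{\delta}(bcd)^{\epsilon}$, then $kr=\alpha+\beta+\gamma+\delta$, $ks=\alpha+\delta+\epsilon$, $kt=\beta+\delta+\epsilon$, $ku=\gamma+\epsilon$, and each listed inequality is a nonnegative combination of $\alpha,\dots,\epsilon$ (e.g. $k(s+t+u-r)=\delta+3\epsilon\geq0$ and $k(r+2t-s-u)=3\beta+2\delta\geq0$). Moreover $[\{ab,ac,ad,abc,bcd,abcd\}]\subseteq\widetilde{H}$ because $(abcd)^3=(ab)(ac)(ad)(bcd)^2\in H$. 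The reverse inclusion, that every lattice point of the cone lies in the monoid generated by the six elements, is the main combinatorial obstacle; I would prove it by peeling off generators in an induction organized by the residues of the coordinates, as in Proposition~\ref{proposition9}(2), which simultaneously gives $\mathcal{A}(H)\subseteq\{ab,ac,ad,abc,bcd,abcd\}=\mathcal{A}(\widetilde{H})$.

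For (3) I would use that every prime $s$-ideal $N$ equals $\bigcup_{u\in\mathcal{A}(H)\cap N}uH$, so it is determined by the atoms it contains. The fundamental syzygy $(ab)^2(ac)^2(bcd)=(ad)(abc)^3$ (the generator of the rank-one relation lattice of the five atoms) is used to show that the ``small'' unions of atoms are not prime, and a direct verification that each of the six listed two-atom unions has a multiplicatively closed complement shows they are prime; minimality then pins down $\mathfrak{X}(H)$. For weak Krull-ness, since $H$ has only finitely many prime $s$-ideals the local-finiteness condition is automatic, and I would compute each localization $H_N$ explicitly and check $\bigcap_{N\in\mathfrak{X}(H)}H_N=H$ by the same coordinate argument as in Propositions~\ref{proposition9}(3) and~\ref{proposition10}(3).

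For (4) the construction is to adjoin the single unit $bc=(abcd)(ad)^{-1}$, i.e. set $T=H\langle bc\rangle$. Then $abcd=(ad)(bc)\in HT^{\times}$, so $\widetilde{H}\subseteq T$ and hence $T=\widetilde{H}\langle bc\rangle$. Since $\langle bc\rangle\cap H=\{1\}$ (an element $(0,k,k,0)$ lies in $H$ only for $k=0$), we get $T^{\times}\cap H=H^{\times}$ and $T=HT^{\times}$. I would verify that $T$ is normal, hence a root closed finitely generated (thus $s$-noetherian) monoid and therefore Krull, by computing its defining half-spaces $r,u\geq0$, $s\leq r+t$, $t\leq r+s$, whose lineality space is $\mathbb{R}(bc)$, so that $T^{\times}=\langle bc\rangle$. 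A short degree-and-coordinate check then shows that none of the five atoms of $H$ equals a product of two $H$-atoms times a power of $bc$, so each remains an atom of $T$; combined with Proposition~\ref{proposition5}(1) this yields $\mathcal{A}(T)=\{u\varepsilon\mid u\in\mathcal{A}(H),\varepsilon\in T^{\times}\}$.

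Finally (5) is where the atom-counting criteria are powerless, and I would argue through lengths. A direct solution of the exponent equations shows that $(abcd)^2=(ad)(abc)(bcd)$ is the \emph{unique} factorization of $(abcd)^2$ in $H$, so $\mathsf{L}_H((abcd)^2)=\{3\}$. Suppose $H$ were transfer Krull. By Proposition~\ref{proposition1} the inclusion $H\hookrightarrow T$ into some Krull monoid $T\subseteq K$ is a transfer homomorphism; as $T$ is root closed, $abcd\in\widetilde{H}\subseteq T$, and since $K$ is torsion-free and $T^{\times}\cap H=H^{\times}$, the element $abcd$ is a non-unit of $T$. As $T$ is atomic, $abcd$ has a factorization into $k\geq1$ atoms of $T$, whence $2k\in\mathsf{L}_T((abcd)^2)$; but transfer homomorphisms preserve sets of lengths, so $\mathsf{L}_T((abcd)^2)=\mathsf{L}_H((abcd)^2)=\{3\}$, forcing $2k=3$, which is absurd. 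Thus $H$ is not transfer Krull. The whole point is the parity of this single length: it is precisely the obstruction that the atom conditions of Theorem~\ref{theorem2} cannot see, and together with (4) it shows that condition (b) of Theorem~\ref{theorem2}(1) can hold for a (reduced affine weakly Krull) monoid that is not transfer Krull, so in particular $H\subseteq T$ is not inert for the $T$ of (4).
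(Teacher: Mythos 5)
Your part (4) is wrong, and this is a genuine gap. Take $T=H\langle bc\rangle$ as you propose. Then $a=(abc)(bc)^{-1}\in T$ and $d=(bcd)(bc)^{-1}\in T$, and by your own computation $T^{\times}=\langle bc\rangle$, so neither $a$ nor $d$ is a unit of $T$. Hence $ad=a\cdot d$ is a product of two non-units of $T$, so the atom $ad$ of $H$ is \emph{not} an atom of $T$, and therefore $\mathcal{A}(T)\subsetneq\{u\varepsilon\mid u\in\mathcal{A}(H),\varepsilon\in T^{\times}\}$. In other words, the ``short degree-and-coordinate check'' you appeal to is false as stated: $ad=(abc)(bcd)(bc)^{-2}$ \emph{is} a product of two $H$-atoms times a power of $bc$ (equivalently $(abc)(bcd)=(ad)(bc)^{2}$, so inverting $bc$ splits $ad$); had you carried the check out you would have found this. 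The idea of adjoining a single unit to $H$ is the right one, but the choice of unit matters: the paper takes $T=[\{a,ac,ad,cd,b,b^{-1}\}]$, i.e.\ it inverts $b$ (by the symmetry $b\leftrightarrow c$ of the generating set, inverting $c$ would also do). There $ab\simeq_{T}a$, $abc\simeq_{T}ac$, $bcd\simeq_{T}cd$, and from $T=\{a^{r}b^{s}c^{t}d^{u}\mid r,t,u\in\mathbb{N}_{0},\,s\in\mathbb{Z},\,t\leq r+u,\,u\leq r+t\}$ one sees that $c,d\notin T$, so $a,ac,ad,cd$ remain atoms and the required equality of atom sets holds. Inverting $bc$ is precisely the wrong choice.

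Your part (5), by contrast, is correct, independent of your (4), and genuinely different from the paper's argument. The paper assumes $H$ transfer Krull, obtains via Proposition~\ref{proposition1} and Lemma~\ref{lemma1}(2) a Krull overmonoid $B$ with $H\subseteq B$ inert, applies inertness to $(abcd)^{2}=(ad)(abc)(bcd)$ to force one of $a$, $d$, $bc$ into $B^{\times}$, and eliminates each case by an atom computation. You instead note that $(0,0,1,1,1)$ is the unique nonnegative solution of the exponent equations, so $\mathsf{L}_{H}\bigl((abcd)^{2}\bigr)=\{3\}$, while $abcd\in\widetilde{H}\subseteq T$ is a non-unit of the Krull (hence atomic) monoid $T$, giving an even length $2k\in\mathsf{L}_{T}\bigl((abcd)^{2}\bigr)$; since transfer homomorphisms preserve sets of lengths (\cite[Proposition 3.2.3]{Ge-HK06}, which the paper itself uses elsewhere), this forces $2k=3$, a contradiction. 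This parity argument is shorter and more transparent than the paper's case analysis. Parts (1)--(3) follow the paper's route, but you leave the real work as sketches: the lattice-point identification of $\widetilde{H}$ in (2) (the paper needs thirteen cases plus an induction) and the primality and intersection-of-localizations computations in (3) would still have to be carried out in full.
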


\begin{proof} Claim: If $(ab)^{\alpha}(ac)^{\beta}(ad)^{\gamma}(abc)^{\delta}(bcd)^{\varepsilon}=(ab)^{\alpha^{\prime}}(ac)^{\beta^{\prime}}(ad)^{\gamma^{\prime}}(abc)^{\delta^{\prime}}(bcd)^{\varepsilon^{\prime}}$ for $\alpha,\beta,\gamma,\delta,\varepsilon\in\mathbb{Z}$ and $\alpha^{\prime},\beta^{\prime},\gamma^{\prime},\delta^{\prime},\varepsilon^{\prime}\in\mathbb{Z}$, then there is some $k\in\mathbb{Z}$ such that $(\alpha,\beta,\gamma,\delta,\varepsilon)=(\alpha^{\prime},\beta^{\prime},\gamma^{\prime},\delta^{\prime},\varepsilon^{\prime})+k(-2,-2,1,3,-1)$.

\medskip
(1) and the claim are straightforward to prove.

\medskip
(2) Set $A=\{a^rb^sc^td^u\mid r,s,t,u\in\mathbb{N}_0,r\leq s+t+u,s\leq r+\min\{t,u\},t\leq r+\min\{s,u\},u\leq r+\min\{s,t\},s+u\leq r+2t,t+u\leq r+2s\}$ and set $B=[\{ab,ac,ad,abc,bcd,abcd\}]$.

\smallskip
First we prove that $\widetilde{H}\subseteq A$. Let $x\in\widetilde{H}$. Clearly, there are some $r,s,t,u\in\mathbb{N}_0$ and $k\in\mathbb{N}$ such that $x=a^rb^sc^td^u$ and $x^k\in H$. Consequently, there are some $\alpha,\beta,\gamma,\delta,\varepsilon\in\mathbb{N}_0$ such that $kr=\alpha+\beta+\gamma+\delta$, $ks=\alpha+\delta+\varepsilon$, $kt=\beta+\delta+\varepsilon$ and $ku=\gamma+\varepsilon$. Note that $(s+t+u-r)k=\delta+3\varepsilon\geq 0$, $(r+u-s)k=\beta+2\gamma\geq 0$, $(r+t-s)k=2\beta+\gamma+\delta\geq 0$, $(r+s-t)k=2\alpha+\gamma+\delta\geq 0$, $(r+u-t)k=\alpha+2\gamma\geq 0$, $(r+s-u)k=2\alpha+\beta+2\delta\geq 0$, $(r+t-u)k=\alpha+2\beta+2\delta\geq 0$, $(r+2t-s-u)k=3\beta+2\delta\geq 0$ and $(r+2s-t-u)k=3\alpha+2\delta\geq 0$. This implies that $x\in A$.

\smallskip
Next we prove that $A\subseteq B$. It suffices to show by induction that for all $r,s,t,u\in\mathbb{N}_0$ with $a^rb^sc^td^u\in A$, it follows that $a^rb^sc^td^u\in B$. Let $r,s,t,u\in\mathbb{N}_0$ be such that $a^rb^sc^td^u\in A$.

Case 1: $r=0$. Observe that $s=t=u$, and hence $a^rb^sc^td^u=(bcd)^s\in B$.

Case 2: $s=0$. We have that $r=t+u$, and thus $a^rb^sc^td^u=(ac)^t(ad)^u\in B$.

Case 3: $t=0$. It follows that $r=s+u$. Consequently, $a^rb^sc^td^u=(ab)^s(ad)^u\in B$.

Case 4: $u=0$. Note that $s,t\leq r$ and $r\leq s+t$. We infer that $a^rb^sc^td^u=(ab)^{r-t}(ac)^{r-s}(abc)^{s+t-r}\in B$.

Case 5: $r=s+t+u$. Then $a^rb^sc^td^u=(ab)^s(ac)^t(ad)^u\in B$.

Case 6: $r+1=s+t+u$. It follows that $s,t>0$. Consequently, $a^rb^sc^td^u=(ab)^{s-1}(ac)^{t-1}(ad)^uabc\in B$.

Case 7: $s=r+t$. Clearly, $t=u$. Therefore, $a^rb^sc^td^u=(ab)^r(bcd)^t\in B$.

Case 8: $s=r+u$. Observe that $u\leq s\leq t$. This implies that $a^rb^sc^td^u=(ab)^{s-t}(abc)^{t-u}(bcd)^u\in B$.

Case 9: $t=r+s$. Obviously, $s=u$, and hence $a^rb^sc^td^u=(ac)^r(bcd)^s\in B$.

Case 10: $t=r+u$. We have that $u\leq s\leq t$. It follows that $a^rb^sc^td^u=(ac)^{t-s}(abc)^{s-u}(bcd)^u\in B$.

Case 11: $u=r+\min\{s,t\}$. We infer that $s=t$, and thus $a^rb^sc^td^u=(ad)^r(bcd)^s\in B$.

Case 12: $r+2t=s+u$. Observe that $t\leq s,u$. Therefore, $a^rb^sc^td^u=(ab)^{s-t}(ad)^{u-t}(bcd)^t\in B$.

Case 13: $r+2s=t+u$. Clearly, $s\leq t,u$. This implies that $a^rb^sc^td^u=(ac)^{t-s}(ad)^{u-s}(bcd)^s\in B$.

Now assume that none of the above cases applies. Set $r^{\prime}=r-1$, $s^{\prime}=s-1$, $t^{\prime}=t-1$ and $u^{\prime}=u-1$. It is straightforward to show that $a^{r^{\prime}}b^{s^{\prime}}c^{t^{\prime}}d^{u^{\prime}}\in A$. Since $r^{\prime}<r$, we infer by the induction hypothesis that $a^{r^{\prime}}b^{s^{\prime}}c^{t^{\prime}}d^{u^{\prime}}\in B$. Consequently, $a^rb^sc^td^u=abcda^{r^{\prime}}b^{s^{\prime}}c^{t^{\prime}}d^{u^{\prime}}\in B$.

\smallskip
Finally, we show that $B\subseteq\widetilde{H}$. Since $(abcd)^2=(ad)(abc)(bcd)\in H$ and $H\subseteq\widetilde{H}$, it is clear that $B\subseteq\widetilde{H}$.

\medskip
(3) It is an easy consequence of the claim that $\mathfrak{X}(H)=\{abH\cup adH,acH\cup adH,adH\cup bcdH,abH\cup abcH,acH\cup abcH,abcH\cup bcdH\}$. It remains to show that $\bigcap_{P\in\mathfrak{X}(H)} H_P=H$. Let $x\in\bigcap_{P\in\mathfrak{X}(H)} H_P$. There are some $(\alpha_i)_{i=1}^6,(\beta_i)_{i=1}^6,(\gamma_i)_{i=1}^6,(\delta_i)_{i=1}^6,(\varepsilon_i)_{i=1}^6\in\mathbb{Z}^6$ such that $x=(ab)^{\alpha_j}(ac)^{\beta_j}(ad)^{\gamma_j}(abc)^{\delta_j}(bcd)^{\varepsilon_j}$ for each $j\in [1,6]$ and  $\alpha_1,\gamma_1,\beta_2,\gamma_2,\gamma_3,\varepsilon_3,\alpha_4,\delta_4,\beta_5,\delta_5,\delta_6,\varepsilon_6\in\mathbb{N}_0$.

We infer by the claim that $x\in H$ if and only if there is some $k\in\mathbb{Z}$ such that $\alpha_1-2k,\beta_1-2k,\gamma_1+k,\delta_1+3k,\varepsilon_1-k\geq 0$ if and only if there is some $k\in\mathbb{Z}$ such that $\min\{\lfloor\frac{\alpha_1}{2}\rfloor,\lfloor\frac{\beta_1}{2}\rfloor,\varepsilon_1\}\geq k\geq\max\{-\gamma_1,-\frac{\delta_1}{3}\}$ if and only if $\min\{\lfloor\frac{\alpha_1}{2}\rfloor,\lfloor\frac{\beta_1}{2}\rfloor,\varepsilon_1\}\geq\max\{-\gamma_1,-\frac{\delta_1}{3}\}$. Therefore, it remains to prove the six inequalities $\lfloor\frac{\alpha_1}{2}\rfloor+\gamma_1\geq 0$, $\lfloor\frac{\beta_1}{2}\rfloor+\gamma_1\geq 0$, $\varepsilon_1+\gamma_1\geq 0$, $\lfloor\frac{\alpha_1}{2}\rfloor+\frac{\delta_1}{3}\geq 0$, $\lfloor\frac{\beta_1}{2}\rfloor+\frac{\delta_1}{3}\geq 0$ and $\varepsilon_1+\frac{\delta_1}{3}\geq 0$.

By the claim, there is some sequence $(k_i)_{i=1}^5\in\mathbb{Z}^5$ such that $(\alpha_1,\beta_1,\gamma_1,\delta_1,\varepsilon_1)=(\alpha_j,\beta_j,\gamma_j,\delta_j,\varepsilon_j)+k_{j-1}(-2,-2,1,3,-1)$ for each $j\in [2,6]$.

Since $\alpha_1,\gamma_1\geq 0$, we have that $\lfloor\frac{\alpha_1}{2}\rfloor+\gamma_1\geq 0$. By using $\beta_2,\gamma_2\geq 0$, we infer that $\lfloor\frac{\beta_1}{2}\rfloor+\gamma_1=\lfloor\frac{\beta_2-2k_1}{2}\rfloor+\gamma_2+k_1=\lfloor\frac{\beta_2}{2}\rfloor+\gamma_2\geq 0$. Since $\gamma_3,\varepsilon_3\geq 0$, it follows that $\varepsilon_1+\gamma_1=\varepsilon_3-k_2+\gamma_3+k_2=\varepsilon_3+\gamma_3\geq 0$. By using $\alpha_4,\delta_4\geq 0$, we have that $\lfloor\frac{\alpha_1}{2}\rfloor+\frac{\delta_1}{3}=\lfloor\frac{\alpha_4-2k_3}{2}\rfloor+\frac{\delta_4+3k_3}{3}=\lfloor\frac{\alpha_4}{2}\rfloor+\frac{\delta_4}{3}\geq 0$. Since $\beta_5,\delta_5\geq 0$, we infer that $\lfloor\frac{\beta_1}{2}\rfloor+\frac{\delta_1}{3}=\lfloor\frac{\beta_5-2k_4}{2}\rfloor+\frac{\delta_5+3k_4}{3}=\lfloor\frac{\beta_5}{2}\rfloor+\frac{\delta_5}{3}\geq 0$. Finally, by using $\delta_6,\varepsilon_6\geq 0$, it follows that $\varepsilon_1+\frac{\delta_1}{3}=\varepsilon_6-k_5+\frac{\delta_6+3k_5}{3}=\varepsilon_6+\frac{\delta_6}{3}\geq 0$.

\medskip
(4) Set $T=[\{a,ac,ad,cd,b,b^{-1}\}]$. Clearly, $T$ is a finitely generated overmonoid of $H$ and $T^{\times}=\{b^k\mid k\in\mathbb{Z}\}$. Observe that $T=\{a^rb^sc^td^u\mid r,t,u\in\mathbb{N}_0,s\in\mathbb{Z},t\leq r+u,u\leq r+t\}$. We infer that $T$ is root closed (and hence it is Krull by \cite[Theorem 2.7.14]{Ge-HK06}) and $\mathcal{A}(T)=\{ab^k,ab^kc,ab^kd,b^kcd\mid k\in\mathbb{Z}\}$. It is now straightforward to prove that $\mathcal{A}(T)=\{u\varepsilon\mid u\in\mathcal{A}(H),\varepsilon\in T^{\times}\}$.

\medskip
(5) Assume that $H$ is transfer Krull. By Proposition~\ref{proposition1} and Lemma~\ref{lemma1}(2) there is some overmonoid $B$ of $H$ such that $B$ is Krull, $B^{\times}\cap H=H^{\times}$ and $H\subseteq B$ is inert. (In particular, $\mathcal{A}(H)\subseteq\mathcal{A}(B)$.) Note that $(abcd)^2=(ad)(abc)(bcd)\in H$ and $abcd\in\widetilde{H}\subseteq B$. Consequently, there is some $\varepsilon\in B^{\times}$ such that $abcd\varepsilon,abcd\varepsilon^{-1}\in H$. It follows by the claim that $\{x\in H\mid x\mid_H (abcd)^2\}=\{1,ad,abc,bcd,a^2bcd,abcd^2,ab^2c^2d,a^2b^2c^2d^2\}$.

We infer that $\varepsilon\in\{a^{-1}b^{-1}c^{-1}d^{-1},b^{-1}c^{-1},d^{-1},a^{-1},a,d,bc,abcd\}$. Since $B^{\times}\cap\widetilde{H}=\widetilde{H}^{\times}=\{1\}$, this implies that $\{a,d,bc\}\cap B^{\times}\not=\emptyset$. If $bc\in B^{\times}$, then $a,d\in B$, and hence $a\in B^{\times}$ or $d\in B^{\times}$ (since $ad\in\mathcal{A}(B)$).

Case 1: $a\in B^{\times}$. Since $ab,ac\in B\setminus B^{\times}$, we have that $b,c\in B\setminus B^{\times}$. Moreover, $bc\simeq_B abc\in\mathcal{A}(B)$, and thus $bc\in\mathcal{A}(B)$, a contradiction.

Case 2: $d\in B^{\times}$. Since $ad,bcd\in B\setminus B^{\times}$, it follows that $a,bc\in B\setminus B^{\times}$. Furthermore, $abc\in\mathcal{A}(B)$, a contradiction.
\end{proof}

Note that Proposition~\ref{proposition11} shows that conditions a and b in Theorem~\ref{theorem2}(1) are no longer equivalent for reduced affine monoids.

\begin{example}\label{example3} Let $F$ be the free abelian monoid with basis $\{a,b\}$ and quotient group $K$, let $H$ be the submonoid of $F$ generated by $\{a^2,b^2,ab,a^2b,ab^2\}$, let $T_1$ be the submonoid of $K$ generated by $\{a,ab,b^2,b^{-2}\}$ and let $T_2$ be the submonoid of $K$ generated by $\{b,ab,a^2,a^{-2}\}$.
\begin{enumerate}
\item[(1)] $H$ is a reduced affine monoid with quotient group $K=\{a^rb^s\mid r,s\in\mathbb{Z}\}$, $\widetilde{H}=F$ and $\mathcal{A}(H)=\{a^2,b^2,ab,a^2b,ab^2\}$.
\item[(2)] $\mathcal{A}(H)\nsubseteq\mathcal{A}(\widetilde{H})$ and, in particular, $H$ is not transfer Krull.
\item[(3)] $T_1$ and $T_2$ are seminormal half-factorial overmonoids of $H$ and $H=T_1\cap T_2$.
\item[(4)] $\mathfrak{X}(H)=\{a^2H\cup abH\cup a^2bH\cup ab^2H,b^2H\cup abH\cup a^2bH\cup ab^2H\}$ and $H$ is seminormal and weakly Krull.
\end{enumerate}
\end{example}

\begin{proof} (1) This is obvious.

\medskip
(2) Note that $\mathcal{A}(H)=\{a^2,b^2,ab,a^2b,ab^2\}\nsubseteq\{a,b\}=\mathcal{A}(\widetilde{H})$. The remaining statement follows from Lemma~\ref{lemma2}(3).

\medskip
(3) Clearly, $T_1$ and $T_2$ are overmonoids of $H$, $\widetilde{T_1}=[\{a,b,b^{-1}\}]$ is a DVM and $\widetilde{T_2}=[\{a,b,a^{-1}\}]$ is a DVM. Moreover, $T_1^{\times}=\{b^{2r}\mid r\in\mathbb{Z}\}$, $T_2^{\times}=\{a^{2r}\mid r\in\mathbb{Z}\}$, $T_1\setminus T_1^{\times}=\widetilde{T_1}\setminus\widetilde{T_1}^{\times}=\{a^rb^s\mid r\in\mathbb{N},s\in\mathbb{Z}\}$ and $T_2\setminus T_2^{\times}=\widetilde{T_2}\setminus\widetilde{T_2}^{\times}=\{a^rb^s\mid r\in\mathbb{Z},s\in\mathbb{N}\}$. Therefore, $T_1$ and $T_2$ are seminormal. Furthermore, $\mathcal{A}(T_1)=\{ab^s\mid s\in\mathbb{Z}\}=\mathcal{A}(\widetilde{T_1})$ and $\mathcal{A}(T_2)=\{a^rb\mid r\in\mathbb{Z}\}=\mathcal{A}(\widetilde{T_2})$. Consequently, $T_1$ and $T_2$ are half-factorial.

Finally, note that $H=\{1\}\cup\{a^{2r}\mid r\in\mathbb{N}\}\cup\{b^{2s}\mid s\in\mathbb{N}\}\cup\{a^rb^s\mid r,s\in\mathbb{N}\}=(\{b^{2s}\mid s\in\mathbb{Z}\}\cup\{a^rb^s\mid r\in\mathbb{N},s\in\mathbb{Z}\})\cap (\{a^{2s}\mid s\in\mathbb{Z}\}\cup\{a^rb^s\mid r\in\mathbb{Z},s\in\mathbb{N}\})=T_1\cap T_2$.

\medskip
(4) Set $P=a^2H\cup abH\cup a^2bH\cup ab^2H$ and $Q=b^2H\cup abH\cup a^2bH\cup ab^2H$. Since $(ab)^2=a^2b^2$, $(ab)^3=(a^2b)(ab^2)$, $(a^2b)^2=a^2(ab)^2$ and $(ab^2)^2=b^2(ab)^2$, we infer that each non-empty prime $s$-ideal of $H$ contains $P$ or $Q$. It is straightforward to prove that $P$ and $Q$ are non-empty prime $s$-ideals of $H$. Now it is easy to see that $\mathfrak{X}(H)=\{P,Q\}$. Observe that $H_P=T_1$ and $H_Q=T_2$. Therefore, $H=H_P\cap H_Q$ is seminormal and weakly Krull by (3).
\end{proof}

By Theorem~\ref{theorem3}, we know that every seminormal weakly factorial monoid whose root closure is a Krull monoid has to be half-factorial. On the contrary, Example~\ref{example3} shows that a seminormal (reduced affine) weakly Krull monoid whose root closure is factorial need not even be a transfer Krull monoid.

\begin{example}\label{example4} Let $F$ be the free abelian monoid with basis $\{a,b,c\}$ and quotient group $K$ and let $H$ be the submonoid of $K$ generated by $\{a,ac,ab,abc,ab^2,ab^2c,a^2b^5,a^2b^5c,c^2,c^{-2}\}$.
\begin{enumerate}
\item[(1)] $H$ is an affine monoid with quotient group $K$ and $\mathcal{A}(H)=\{ac^t,abc^t,ab^2c,a^2b^5c^t\mid t\in\mathbb{Z}\}$.
\item[(2)] $H\subseteq\widetilde{H}$ is inert and, in particular, $H$ is transfer Krull.
\item[(3)] $H$ is seminormal.
\item[(4)] $H$ is neither Krull nor half-factorial.
\end{enumerate}
\end{example}

\begin{proof} (1) This is clear.

(2) We have that $\widetilde{H}=[\{a,ab,ab^2,a^2b^5,c,c^{-1}\}]$ and $H\setminus H^{\times}=\widetilde{H}\setminus\widetilde{H}^{\times}$. Let $x,y\in\widetilde{H}$ be such that $xy\in H$. We have to show that $x\varepsilon,y\varepsilon^{-1}\in H$ for some $\varepsilon\in\widetilde{H}^{\times}$. The statement clearly holds if $x,y\in H$. Now let $x\not\in H$ or $y\not\in H$. Without restriction let $x\not\in H$. Then $x\in\widetilde{H}^{\times}$. Set $\varepsilon=x^{-1}$. Then $\varepsilon\in\widetilde{H}^{\times}$, $x\varepsilon=1\in H$ and $y\varepsilon^{-1}=xy\in H$.

(3) This is clear, since $H\setminus H^{\times}=\widetilde{H}\setminus\widetilde{H}^{\times}$.

(4) Since $c\in\widetilde{H}\setminus H$, we have that $H$ is not Krull. Since $(ab)^5=(a^2b^5)a^3$, we obtain that $H$ is not half-factorial.
\end{proof}

\bigskip
\noindent {\bf ACKNOWLEDGEMENTS.} We want to thank A. Geroldinger for helpful discussions and for many suggestions and comments which improved the quality of this paper.

\providecommand{\bysame}{\leavevmode\hbox to3em{\hrulefill}\thinspace}
\providecommand{\MR}{\relax\ifhmode\unskip\space\fi MR}
\providecommand{\MRhref}[2]{\href{http://www.ams.org/mathscinet-getitem?mr=#1}{#2}}
\providecommand{\href}[2]{#2}

\end{document}